\definecolor{mylinkcolor}{rgb}{0.5,0.0,0.0}
\definecolor{myurlcolor}{rgb}{0.0,0.0,0.75}
\newtheorem{theorem}{Theorem}             
\newtheorem{lemma}[theorem]{Lemma}
\newtheorem{proposition}[theorem]{Proposition}
\newtheorem{corollary}[theorem]{Corollary}
\theoremstyle{remark}
\newtheorem{remark}[theorem]{Remark}
\theoremstyle{definition}
\newtheorem{algorithm}{Algorithm}
\numberwithin{equation}{section}
\newcommand{\defi}[1]{\emph{#1}}
\newcommand{\software}[1]{\textsc{#1}{}}
\newcommand{\Sage}{\software{SageMath}}
\newcommand{\Magma}{\software{Magma}}
\newcommand{\Github}{\software{GitHub}}
\newcommand{\GP}{\software{Pari/GP}}
\newcommand{\Arb}{\software{Arb}}
\newcommand{\true}{\texttt{true}}
\newcommand{\false}{\texttt{false}}
\newcommand{\Q}{\mathbb{Q}}
\newcommand{\Qbar}{\overline{\mathbb{Q}}}
\newcommand{\R}{\mathbb{R}}
\newcommand{\C}{\mathbb{C}}
\newcommand{\Z}{\mathbb{Z}}
\newcommand{\F}{\mathbb{F}}
\newcommand{\RR}{\mathcal{R}}
\newcommand{\OO}{\mathcal{O}}
\newcommand{\OD}{\mathcal{O}_D}
\newcommand{\OF}{\mathcal{O}_F}
\newcommand{\OK}{\mathcal{O}_K}
\newcommand{\ClD}{\mathcal{C}_D}
\newcommand{\Fp}{\mathbb{F}_p}
\newcommand{\Fq}{\mathbb{F}_\mathfrak{q}}
\newcommand{\Fl}{\mathbb{F}_{\ell}}
\newcommand{\Flbar}{\overline{\mathbb{F}}_\ell}
\newcommand{\Zl}{\mathbb{Z}_{\ell}}
\newcommand{\p}{\mathfrak{p}}
\newcommand{\q}{\mathfrak{q}}
\renewcommand{\a}{\mathfrak{a}}
\renewcommand{\b}{\mathfrak{b}}
\newcommand{\f}{\mathfrak{f}}
\renewcommand{\l}{\mathfrak{l}}
\renewcommand{\H}{|H|}
\newcommand{\HD}{|H_D|}
\newcommand{\leg}[2]{\left(\frac{#1}{#2}\right)}
\renewcommand{\ss}{{\rm ss}}
\newcommand{\pmin}{p_{\rm min}}
\newcommand{\arxiv}[1]{arXiv:\href{https://arxiv.org/abs/#1}{#1}}
\DeclareMathOperator{\GL}{GL}
\DeclareMathOperator{\SL}{SL}
\DeclareMathOperator{\disc}{disc}
\DeclareMathOperator{\ord}{ord}
\DeclareMathOperator{\Res}{Res}
\DeclareMathOperator{\Gal}{Gal}
\DeclareMathOperator{\End}{End}
\DeclareMathOperator{\Frob}{Frob}
\DeclareMathOperator{\round}{round}
\DeclareMathOperator{\llog}{llog}
\DeclareMathOperator{\Li}{Li}
\newcommand{\Frobp}{\Frob_p}
\newcommand{\Frobpp}{\Frob_{\p}}
\begin{document}

\title[Computing the endomorphism ring of an elliptic curve]{Computing the endomorphism ring\\of an elliptic curve over a number field}

\author{John E. Cremona}
\address{Mathematics Institute\\
University of Warwick\\
Coventry CV4 7AL\\
UK}
\email{J.E.Cremona@warwick.ac.uk}

\author{Andrew V. Sutherland}
\address{Massachusetts Institute of Technology\\
Department of Mathematics\\
77 Massachusetts Avenue\\
Cambridge, MA 02139\\
USA}
\email{drew@math.mit.edu}

\thanks{Sutherland was supported by Simons Foundation grant 550033.}

\subjclass{Primary 11G05; Secondary 11G15, 11Y16, 11Y40}
\date{19 April 2023}
\keywords{Elliptic curves, Complex Multiplication, Hilbert Class Polynomial}

\begin{abstract}
We describe deterministic and probabilistic algorithms to determine whether or not a given monic irreducible polynomial $H\in \Z[X]$ is a Hilbert class polynomial, and if so, which one.
These algorithms can be used to determine whether a given algebraic integer is the $j$-invariant of an elliptic curve with complex multiplication (CM), and if so, the associated CM discriminant.
More generally, given an elliptic curve~$E$ over a number field, one can use them to compute the endomorphism ring of $E$.
Our algorithms admit simple implementations that are asymptotically and practically faster than previous approaches.
\end{abstract}

\maketitle

\section{Introduction}

Let $E$ be an elliptic curve defined over a number field.
The (geometric) endomorphism ring $\End(E)$ is an arithmetic invariant that plays a key role in many theorems and conjectures, including those related to the distribution of Frobenius traces such as the Sato--Tate and Lang--Trotter conjectures, and those related to Galois representations associated to $E$, such as Serre's uniformity question.
It is known that the ring $\End(E)$ is isomorphic either to $\Z$, or to an order $\OO$ in an imaginary quadratic field, and in the latter case one says that $E$ has \defi{complex multiplication} (CM).
The endomorphism ring is a property of the $j$-invariant~$j(E)$, so one may equivalently ask whether a given algebraic number is a \defi{CM $j$-invariant}, historically called a \emph{singular modulus}.
The order $\OO$ associated to a CM $j$-invariant is uniquely determined by its discriminant~$D:=\disc(\OO)$, so a second question is to determine the negative integer~$D$.
These are the questions we address in this paper.

An alternative formulation of these questions may be expressed in terms of \defi{Hilbert class polynomials} (HCPs).
The HCP associated to a negative discriminant~$D$ is the irreducible polynomial~$H_D\in \Z[X]$ whose roots are the CM $j$-invariants of discriminant~$D$ (all of which are algebraic integers).
So we may instead ask whether a given monic, irreducible, integer polynomial $H$ is or is not an HCP, and if so, to determine the discriminant~$D$ for which $H=H_D$.

\vspace{4pt}

\noindent
\textbf{Problem 1:} Given an algebraic integer $j$ or its minimal polynomial $H\in \Z[X]$, determine whether $j$ is CM, equivalently, whether $H$ is an HCP, $H_D$.\vspace{4pt}

\noindent
\textbf{Problem 2:} If the answer to Problem 1 is ``yes'', determine the value of $D$.

\vspace{4pt}

These problems both have existing solutions that have been implemented in computer algebra systems such as \Magma{} \cite{Magma} and \Sage{} \cite{Sage}, but these implementations are not as efficient as they could be.  In this paper we present a nearly quasilinear time algorithm that is both easy to implement and substantially faster in practice than the methods previously available.\footnote{At the time of writing, \Magma{}~(version 2.27-5) and \Sage\ (version~9.7) were the current versions; however, \Sage\ version 10.0 (released 20 May 2023) uses our implementation of Algorithm~\ref{alg:two}. }

The degree of the Hilbert class polynomial $H_D$ is the class number $h(D)$.
For any positive integer $h$, only finitely many negative discriminants $D$ have class number $h(D)=h$.
For example, when $h=1$ there are 13 (of which 9 are fundamental), corresponding to the 13 CM $j$-invariants that lie in $\Q$.
For $h\le 100$ the complete list of discriminants $D<0$ with $h(D)=h$ is known.
All such fundamental discriminants $D_0$ were enumerated by Watkins~\cite{Watkins}, and Klaise \cite{Klaise} used this list and the formula for $h(D)$ as a multiple
of $h(D_0)$ (equation~\eqref{eq:hD} below) to determine all negative discriminants~$D$ with $h(D)\le 100$.
There are a total of 66,758 such discriminants, of which 42,272 are fundamental.

This allows one to address Problems 1 and 2 for algebraic integers of low degree using a simple lookup table, an approach that is eminently practical when the degree bound is small: for example, the $L$-functions and Modular Forms Database (LMFDB) \cite{LMFDB} currently contains 725,185 elliptic curves defined over number fields of degrees at most 6 whose CM discriminants can all be computed using a lookup table of 281 HCPs.
But this approach is impractical for larger degrees: over 2GB of memory is needed to store the HCPs of degree up to~100, and asymptotically one expects the space for $h(D)\le h$ to grow like $h^{4+o(1)}$, which is quasiquadratic in~$|D|$.

An answer to Problem 2 can be independently verified by computing $H_D$ for the claimed value of $D$ and verifying that $H=H_D$, which can be accomplished in $|D|^{1+o(1)}$ time \cite{BBEL,Enge,Sutherland:HCP,Streng}.  This is roughly quasilinear in the size of $H_D$, which has degree $h(D)=|D|^{1/2+o(1)}$ and logarithmic height $|H_D|=|D|^{1/2+o(1)}$. The goal of this paper is to solve Problems 1 and 2 just as quickly; we do not quite achieve this goal, but we come close.

Our strategy for Problem 1 is to search for a prime $\ell$ modulo which $H$ is squarefree and has a root in $\Flbar$ that is an ordinary $j$-invariant;
such primes are plentiful.
Once~$\ell$ is found, we compute the classical modular polynomial $\Phi_\ell\in \Z[X,Y]$ and use a resultant to determine if for every root $j$ of $H$ the polynomial $\Phi_\ell(X,j)$ has two roots; this will occur when they are CM $j$-invariants but not otherwise, as we prove in Propositions~\ref{cor:CM-ell-count} and \ref{prop:non-CM-mellj}.
To achieve an attractive running time, this algorithm requires an explicit bound on~$\ell$ when~$H$ is an HCP; for this we rely on the generalized Riemann hypothesis (GRH).
This yields our first result.

\begin{theorem}
Given a monic irreducible polynomial $H\in \Z[X]$, if the GRH is true then Algorithm~\ref{alg:one} returns \true{} if and only if $H$ is a Hilbert class polynomial.
It admits a deterministic implementation that runs in $(h^2\H)^{1+o(1)}$ time using $(h\H)^{1+o(1)}$ space.
\end{theorem}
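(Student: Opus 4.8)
The plan is to establish correctness under GRH first, and then the resource bounds, the latter hinging on a GRH-conditional bound for the auxiliary prime~$\ell$.

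\textbf{Correctness.} Recall that Algorithm~\ref{alg:one} first disposes of obvious non-HCPs by cheap necessary tests that any genuine $H_D$ with $h(D)=h$ passes (for instance $\H$ cannot exceed an explicit bound on $|H_D|$ over discriminants of class number $h$), then searches for a prime $\ell$ modulo which $H$ is squarefree and has a root in $\Flbar$ that is an ordinary $j$-invariant, next computes the resultant
\[
R(X):=\Res_Y\bigl(H(Y),\Phi_\ell(X,Y)\bigr)=\prod_{H(j)=0}\Phi_\ell(X,j)\in\Z[X]
\]
by Chinese remaindering modulo small primes $q\neq\ell$, and returns \true{} precisely when $H(X)^2\mid R(X)$ --- equivalently, when for every root $j$ of $H$ the polynomial $\Phi_\ell(X,j)$ has two roots (with multiplicity) that are again roots of $H$. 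I would argue the two implications as follows. If $H$ is not an HCP it either fails a preliminary test, or else survives to the main test at some admissible $\ell$ (one exists, since such primes are plentiful), for which Proposition~\ref{prop:non-CM-mellj} gives $H^2\nmid R$; either way the algorithm returns \false{}. If $H=H_D$ is an HCP, then by Deuring's reduction theory any prime $\ell$ that splits in $K:=\Q(\sqrt D)$ with $\ell\nmid\disc(H)$ is admissible, and for such $\ell$ the roots of $H$ reduce, modulo a suitable prime above $\ell$, to the $h(D)$ distinct ordinary $j$-invariants of discriminant $D$ over $\Flbar$; Proposition~\ref{cor:CM-ell-count} then shows that each root of $H$ appears in $R$ with multiplicity exactly two, so $H^2\mid R$ and the algorithm returns \true{}. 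Hence, provided an admissible $\ell$ of suitable size is produced in the HCP case, the stated equivalence holds.

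\textbf{The main obstacle} is exactly that: an explicit, small bound on the least admissible prime $\ell$ when $H$ is an HCP. The plan is to invoke a GRH-conditional effective form of the Chebotarev density theorem for the \emph{quadratic} field $K=\Q(\sqrt D)$ (whose discriminant divides $D$), which produces a prime $\ell$ that splits in $K$ and avoids $\disc(H)$ of size $(\log|D|)^{O(1)}$. One then re-expresses this through the estimates $h(D)=|D|^{1/2+o(1)}$ and $|H_D|=|D|^{1/2+o(1)}$ recalled in the introduction: these give $|D|=(h\H)^{1+o(1)}$, hence $\ell=(h\H)^{o(1)}$. For inputs that survive the preliminary tests but are not HCPs, a bound of the same quality follows from elementary estimates on the prime factorisation of $\disc(H)$. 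Thus $\ell$ is quasi-polylogarithmic in $h\H$ and enters all of the cost estimates below only through a negligible $(h\H)^{o(1)}$ factor --- which is precisely why an explicit bound on $\ell$ is what yields an attractive running time.

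\textbf{Complexity.} With $\ell=(h\H)^{o(1)}$ the work is accounted for in three pieces. First, computing $\disc(H)$ and running the preliminary tests, then sieving candidate primes $p\le\ell$ and for each checking squarefreeness (via $\gcd(H,H')\bmod p$) and the existence of an ordinary root (by comparing $H\bmod p$ with the supersingular polynomial modulo $p$): batching the modular reductions of the coefficients of $H$ over a subproduct tree and using fast arithmetic, this costs $(h\H)^{1+o(1)}$. Second, computing $\Phi_\ell\in\Z[X,Y]$: cost $\ell^{3+o(1)}=(h\H)^{o(1)}$. Third, the deciding step, which dominates: $R(X)$ has degree $h(\ell+1)=h^{1+o(1)}$ and coefficients of $(h\H)^{1+o(1)}$ bits, so for each of $(h\H)^{1+o(1)}$ auxiliary primes $q$ --- enough that their product exceeds an a priori height bound for $R$ --- one computes $R\bmod q$ as a resultant of a degree-$h$ and a degree-$(\ell+1)$ polynomial over $\F_q$, at cost $h^{1+o(1)}$, and then tests the divisibility $H(X)^2\mid R(X)$ by Chinese remaindering; the total is $h^{1+o(1)}\cdot(h\H)^{1+o(1)}=(h^2\H)^{1+o(1)}$ time. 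At every moment the largest object stored --- $H$, a single reduction $R\bmod q$, the subproduct tree, or $\Phi_\ell$ --- occupies $(h\H)^{1+o(1)}$ bits, giving the claimed space. Finally, every subroutine --- integer and polynomial arithmetic, gcd, resultant, Chinese remaindering, and forming and comparing against the supersingular polynomial --- is deterministic, so the implementation is; in particular no probabilistic polynomial factorisation is needed.
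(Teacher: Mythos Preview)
Your correctness argument has a genuine gap in the HCP direction. You correctly observe that a prime $\ell$ which splits in $K=\Q(\sqrt D)$ is admissible (i.e.\ $\overline H$ is squarefree with an ordinary root) and gives $m_\ell(j)=2$ via Corollary~\ref{cor:CM-ell-count}. But the algorithm cannot search for split primes --- it has no access to $K$ --- it searches for \emph{admissible} primes and uses the first one it finds. For correctness you therefore need the converse: that any admissible odd $\ell$ satisfies $\left(\tfrac{D}{\ell}\right)=+1$. The ordinary-root condition together with Deuring gives only that $\ell$ splits in $K$, hence $\ell\nmid D_0$; it does \emph{not} rule out $\ell\mid f$. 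When $\ell\mid f$ one still has $\ell\mid D$, and Corollary~\ref{cor:CM-ell-count} then gives $m_\ell(j)=0$, so the algorithm would erroneously return \false{} on the genuine HCP $H_D$. The paper closes this gap with Lemma~\ref{lem:abD}: for nonabelian $D$, any odd $\ell\mid D$ ramifies in the ring class field and hence in $F=\Q[X]/(H_D)$, forcing $H_D\bmod\ell$ to be non-squarefree (the $101$ abelian discriminants are checked directly). This lemma is what guarantees that the admissible $\ell$ actually found satisfies $\left(\tfrac{D}{\ell}\right)=+1$, and it is missing from your argument.

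On the complexity side, your CRT approach is a legitimate alternative to the paper's direct row reduction of the Sylvester matrix over $\Z[X]$, but your per-prime cost is not right as stated: ``$R\bmod q$ as a resultant of a degree-$h$ and a degree-$(\ell+1)$ polynomial over $\F_q$, at cost $h^{1+o(1)}$'' treats a bivariate resultant as if it were univariate. In fact $R\bmod q=\Res_Y(\Phi_\ell(X,Y),H(Y))$ is a polynomial of degree $h(\ell+1)$ in $\F_q[X]$, not a scalar; evaluation--interpolation at $h(\ell+1)$ points costs $h^{2+o(1)}$ field operations per prime, giving $(h^3\H)^{1+o(1)}$ in total rather than $(h^2\H)^{1+o(1)}$. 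Minor points: Algorithm~\ref{alg:one} contains no preliminary height test on $\H$ (the only early exit is the bound $\ell\le\ell(h)$ in step~\ref{alg1:toobig}), and the condition $\ell\nmid\disc(H)$ you impose is unnecessary for HCPs by Proposition~\ref{prop:squarefree}.
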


Our strategy for Problem 2, which also solves Problem 1, is to search for a prime~$\ell$ modulo which~$H$ has an $\Fl$-rational root that is the $j$-invariant of an ordinary elliptic curve $E/\Fl$.
We then compute the discriminant $D$ of the endomorphism ring of $E$.
By computing $H_D$ and comparing it to $H$, we can solve Problems 1 and 2 simultaneously.
Under reasonable heuristic assumptions, including but not limited to the GRH, we obtain an algorithm that has a nearly quasilinear running time.

\begin{theorem}\label{thm:alg2complexityintro}
Given a monic irreducible polynomial $H\in \Z[X]$, Algorithm~\ref{alg:two} returns \true{} and the discriminant~$D$ if and only if $H$ is the Hilbert class polynomial $H_D$.
Under the heuristic assumptions of Theorem~\ref{thm:alg2complexity}, it can be implemented as a probabilistic algorithm of Las Vegas type that runs in $h(h+\H)^{1+o(1)}$ expected time using $h(h+\H)^{1+o(1)}$ space.
\end{theorem}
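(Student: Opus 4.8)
The plan is to prove the logical equivalence unconditionally and then deduce the complexity bound from Theorem~\ref{thm:alg2complexity} together with the standard size estimates relating $D$, $h$, and $\H$. \emph{Correctness.} One direction is immediate, since Algorithm~\ref{alg:two} outputs $(\true,D)$ only after recomputing $H_D$ and checking $H=H_D$. For the converse, suppose $H=H_D$; I would show that the search for a usable prime terminates and that every usable prime recovers $D$. If $\ell$ is a prime modulo which $H$ is separable with an $\Fl$-rational root $j_0$ that is the $j$-invariant of an ordinary curve $E/\Fl$, then $j_0$ is the reduction at a degree-one prime $\p\mid\ell$ of a root $\alpha$ of $H=H_D$; since every root of $H_D$ is a CM $j$-invariant of discriminant $D$, Deuring's reduction theorem forces $\ell$ to split in $K=\Q(\sqrt D)$ and $\End(E)\cong\OD$ (twisting leaves the geometric endomorphism ring unchanged, and an ordinary curve over $\Fl$ has all its endomorphisms over $\Fl$), so the algorithm computes $\disc\End(E)=D$, recomputes $H_D=H$, and returns $(\true,D)$; cf.\ Proposition~\ref{cor:CM-ell-count}. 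Termination holds because the primes splitting completely in the ring class field of $\OD$ have density $1/(2h)$ by Chebotarev, and all but finitely many of them are separable for $H_D$ and, splitting in $K$, supply an ordinary $\Fl$-rational root; the cases $D\in\{-3,-4\}$, where $j_0\in\{0,1728\}$, work identically with ``$\ell$ splits in $K$'' read as $\ell\equiv 1\pmod 3$ (resp.\ $\ell\equiv1\pmod 4$). When $H$ is \emph{not} an HCP, a usable prime still exists --- a root $\alpha$ of $H$ is a non-CM $j$-invariant, so a curve with $j$-invariant $\alpha$ has supersingular reduction only on a density-zero set of degree-one primes (Serre), which furnishes $\ell$ --- and, $H$ being no $H_{D'}$, the verification fails and the algorithm returns \false{}; cf.\ Proposition~\ref{prop:non-CM-mellj}.

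\emph{Complexity.} Here I would invoke Theorem~\ref{thm:alg2complexity}. For $H$ not an HCP, the least usable prime is small under the stated heuristics, the reductions and root-finding cost $(h\H)^{1+o(1)}$, and any discriminant surviving the test $h(D')=h$ has $|D'|=h^{2+o(1)}$, so the rest costs $h^{2+o(1)}\le h(h+\H)^{1+o(1)}$. For $H=H_D$, the class number formula gives $h=h(D)=|D|^{1/2+o(1)}$ and the standard archimedean estimate gives $\H=|D|^{1/2+o(1)}$, so $h(h+\H)^{1+o(1)}=|D|^{1+o(1)}$; the dominant step is recomputing $H_D$, at cost $|D|^{1+o(1)}$ in time and space~\cite{BBEL,Enge,Sutherland:HCP}, and everything else fits in $|D|^{1+o(1)}$ because $H$ is reduced modulo only $(h\H)^{o(1)}$ primes: a few small ones, from whose factorization types (which by Deuring encode the splitting of $\ell$ in $K$) together with the a priori bound $|D|=O(\H^2)$ one pins down the candidate $D$, and then a single prime $\ell=(t^2+|D|)/4$ with $t$ minimal subject to $\ell$ prime and coprime to the relevant conductors (heuristically $t=(\log|D|)^{O(1)}$, so $\ell=|D|^{1+o(1)}$), modulo which one extracts a root $j_0$ and checks that the Frobenius trace of $E_{j_0}$ equals $\pm t$, certifying $\Z[\Frob_{E_{j_0}}]=\OD$ and hence, by Deuring, $\disc\End(E_{j_0})=D$. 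The algorithm is of Las Vegas type because its output is self-certified, while the randomness (auxiliary primes, point counting, polynomial and integer factorization) affects only the running time; the space bound is clear since no step exceeds its time.

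\emph{The main obstacle} is the HCP case of the complexity. Because $\Z[\Frob]\subseteq\OD$ forces every usable prime to exceed $|D|/4$, a naive search reducing $H$ modulo each successive prime up to a usable $\ell$ would reduce $H$ modulo $|D|^{1+o(1)}$ primes at cost $(h\H)^{1+o(1)}$ apiece --- in total $|D|^{2+o(1)}$, far over budget --- and GRH alone only caps the least usable prime at $|D|^{1+o(1)}$ (it cannot be made small, and the Chebotarev error term is too large at this scale to help the count). So the real content, and where the heuristics of Theorem~\ref{thm:alg2complexity} are indispensable, is the claim that $D$ can be recognized from $H$ modulo $(h\H)^{o(1)}$ small primes and a verification prime of size $|D|^{1+o(1)}$ then written down directly, so that $H$ is reduced modulo only $(h\H)^{o(1)}$ primes in all.
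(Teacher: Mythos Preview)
Your correctness argument is essentially sound, though you omit one case: Algorithm~\ref{alg:two} can return \false{} in step~\ref{alg2:checkd1fac} \emph{before} a usable prime is ever reached, and you must rule this out when $H$ is an HCP. The paper does this via Propositions~\ref{prop:real-roots} and~\ref{prop:mod-p-factors}, which force the number of $\Fp$-rational roots of a squarefree $\overline{H_D}$ to lie in $\{0,h\}\cup\mathcal D$.

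For the complexity, invoking Theorem~\ref{thm:alg2complexity} is correct and is all that is needed---that theorem already states the bound $h(h+\H)^{1+o(1)}$---but your gloss on \emph{why} it holds is wrong, and your ``main obstacle'' paragraph describes a different algorithm. Algorithm~\ref{alg:two} does not pin down $D$ from factorization types modulo a few small primes and then manufacture a single verification prime $\ell=(t^2+|D|)/4$; it simply tries primes $p\ge\pmin$ until it finds one with an ordinary $\Fp$-rational root and computes $\End(\overline E)$ there. The heuristic~(\textbf{H}) of Theorem~\ref{thm:alg2complexity} is a density statement: at least a $c/(h\log h)$ fraction of primes in $[\pmin,2\pmin]$ are usable, so $O(h)$ random primes suffice in expectation. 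The device you are missing is Remark~\ref{rem:batching}: one does not reduce $H$ modulo \emph{fewer} primes, one reduces it modulo all $O(h)$ primes \emph{simultaneously} via a product/remainder tree, at total cost $h(h+\H)(\log(h+\H))^{2+o(1)}$ rather than $(h\H)^{1+o(1)}$ per prime. That batching is how the budget is met in the HCP case, and it is the actual content of the proof of Theorem~\ref{thm:alg2complexity}.
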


We should emphasize that, unlike Algorithm~\ref{alg:one}, the output of Algorithm~\ref{alg:two} is correct whether our heuristic assumptions (including the GRH) are true or not, it is only when analyzing the running time that our heuristics are used.  The best known upper bounds on $\H$ for HCPs imply $\H=h^{1+o(1)}$, a bound that appears to be essentially tight, and within the practical range the total size of HCPs appears to be $h\H^{1+o(1)}$; see Tables 3 and 4 in \cite{Sutherland:HCP}, for example.  This suggests that the running time of Algorithm 2 on inputs that are HCPs is quasilinear in the size of the input, although we are not able to prove this.

This result represents an improvement over previous algorithms described in the literature due to Achter~\cite{Achter}, Armitage~\cite{Armitage}, and Charles~\cite{Charles}, which we discuss in Section~\ref{sec:oldalgs}, where we also describe the algorithms previously implemented in \Magma{} and \Sage{}.
More significantly, the new algorithms are easy to implement and fast in practice.  Implementations of our algorithms in \Magma{}, \GP{}, and \Sage{} are available in our \Github{} repository \cite{repo}, and timings can be found in Table~1 in \S\ref{sec:timings}, along with timings for existing methods in \Magma{} and \Sage{}.

For the inputs $H_D$ of degree $h(D)= 200$ that we tested, all three implementations of Algorithm~\ref{alg:two} took less than two seconds to correctly identify $H_D$ as a Hilbert class polynomial and determine the value of~$D$, thereby solving Problems 1 and 2.  By contrast, the methods previously used by \Magma{} and \Sage{} took close to 20 minutes on the same inputs.  For degree 200 inputs of similar size that are \emph{not} Hilbert class polynomials (the case most likely to arise in practice), the difference is even more pronounced: Algorithm~\ref{alg:two} takes less than ten milliseconds, versus more than ten minutes for existing methods.

\section{Theoretical background}

To simplify matters, throughout this article we restrict our attention to fields whose characteristic is not $2$ or $3$.
All the facts we recall here have analogs that hold in characteristic 2 and 3, see \cite[Appendix A]{SilvermanI}, but we shall not have need of them (our algorithms can easily be implemented to never use such fields).
For similar reasons we exclude $j$-invariants $j=0,1728$ and the CM discriminant $D=-3,-4$ whenever it is convenient to do so.
Both of the Problems we consider are trivial to solve in these cases: the $j$-invariant $0$ is the root of the Hilbert class polynomial $H_{-3}(X)=X$ for the CM discriminant $-3$ of the maximal order of $\Q(\zeta_3)$, and the $j$-invariant $1728$ is the root of the Hilbert class polynomial $H_{-4}(X)=X-1728$ for the CM discriminant $-4$ of the maximal order of $\Q(i)$.

\subsection{Elliptic curves and their \texorpdfstring{$j$}{j}-invariants}

Recall that the \defi{$j$-invariant} of an elliptic curve~$E$ defined over a field~$F$ is an element $j(E)$ of $F$, and every element of $F$ is the $j$-invariant of an elliptic curve $E/F$.
Over a field whose characteristic is not~$2$ or~$3$ every elliptic curve $E/F$ is defined by an equation of the form $y^2=x^3+ax+b$, in which case its $j$-invariant is $j(E)\coloneqq 1728\cdot 4a^3/(4a^3+27b^2)$.
For any $j\in F$ we may construct $E/F$ with $j(E)=j$ by taking $E$ to by $y^2=x^3-1$ or $y^2=x^3-x$ when $j=0$ or $j=1728$, respectively, and otherwise defining $E$ as $y^2=x^3+ax+b$ with $a=-3j(j-1728)$ and $b=-2j(j-1728)^2$.

The $j$-invariant of $E$ uniquely determines its geometric isomorphism class: for elliptic curves $E_1/F$ and $E_2/F$ we have $j(E_1)=j(E_2)$ if and only if $E_1$ and $E_2$ are isomorphic over an algebraic closure of $F$.
Such an isomorphism is necessarily defined over a finite extension of $F$, and for $j$-invariants other than $0$ and $1728$ this extension is at most a quadratic extension: if $E_1$ and $E_2$ are not isomorphic over~$F$ then they are isomorphic over some quadratic extension $F(\sqrt{d})$ with $d\in \OF$, and if $E_1$ is defined by $y^2=x^3+ax+b$ then $y^2=x^3+d^2ax+d^3b$ is a defining equation for $E_2$ and we say that $E_2$ is the quadratic twist of $E_1$ by $d$.

Elliptic curves~$E$ over~$\C$ are isomorphic (both as elliptic curves and as complex analytic varieties) to $\C/\Lambda$ for some lattice~$\Lambda$ in~$\C$.
If $\Lambda$ has $\Z$-basis~$\omega_1,\omega_2$ with $\tau=\omega_1/\omega_2$ in the complex upper half-plane, then we set $j(\Lambda)=j(\tau)\coloneqq e^{-2\pi i \tau} + 744 + 196884\,e^{2\pi i \tau}+\cdots$, the value of the classical elliptic $j$-function
at~$\tau$, and we have $j(E)=j(\Lambda)$.
This value is independent of the (oriented) $\Z$-basis, and homothetic lattices have the same $j$-invariant.
When~$E$ is defined over a number field~$F$ of degree~$n$, with~$n$ embeddings~$\sigma_i:F\to\C$, the images of $j(E)\in F$ under the $\sigma_i$ are the $j$-invariants of the curves~$\sigma_i(E)/\C$.

\subsection{Basic CM facts} \label{subsec:basicCMfacts}

We refer the reader to \cite{Cox}, and also to \cite[Ch.~II]{SilvermanII} for CM by the maximal order of an imaginary quadratic field, and to \cite[Ch.~6]{Schertz} for the general case.

An \defi{imaginary quadratic order} is a finite index subring of the ring of integers $\OK$ of an imaginary quadratic field $K$.
Imaginary quadratic orders are in 1-to-1 correspondence with the set of \defi{imaginary quadratic discriminants}: negative integers $D$ that are squares modulo $4$.
Every such $D$ arises as the discriminant of a unique imaginary quadratic order $\OD$, and can be written as $D=f^2D_0$ where the \defi{fundamental discriminant} $D_0$ is equal to the discriminant $D_K$ of the imaginary quadratic field $K=\Q(\sqrt{D})=\Q(\sqrt{D_K})$ and $f\coloneqq [\OK:\OD]=\sqrt{D/D_K}$ is the \defi{conductor} of $\OD$.
The class number~$h(D)\coloneqq h(\OD)$ is the order of the \defi{class group} $\ClD$ of invertible fractional $\OD$-ideals modulo principal fractional $\OD$-deals.
The class numbers $h(D)$ and $h(\OK)$ are related by the formula \cite[Thm.~7.24]{Cox}
\begin{equation}\label{eq:hD}
h(D) = h(\OD) = \frac{h(\OK)f}{[\OK^\times:\OD^\times]}\prod_{p\,|\,f}\left(1-\left(\frac{D_K}{p}\right)\frac{1}{p}\right),
\end{equation}
where $[\OK^\times:\OD^\times]$ is $2$ (resp. 3) when $D_K=-4$ (resp. $D_K=-3$) and $f>1$, and $1$ otherwise, and $\left(\frac{D_K}{p}\right)\in\{0,\pm 1\}$ is a Kronecker symbol; the integer $h(\OK)$ divides $h(D)$.

Fix an embedding of $K$ in~$\C$.
The image of each invertible $\OD$-ideal $\a$ under this embedding is a lattice in~$\C$ homothetic to $\Z+\tau\Z$ for some $\tau\in K\subseteq\C$ that we may assume lies in the upper half plane.
We define $j(\a)$ to be the $j$-invariant of this lattice; such values $j(\a)$ are traditionally called \defi{singular moduli}.
Homothetic lattices have the same $j$-invariant, so $j(\a)$ depends only on the ideal class~$[\a]$ of~$\a$ in $\ClD$ and may be written as~$j([\a])$.
If $E$ is an elliptic curve defined over a number field $F$ with $\End(E)$ an order in the imaginary quadratic field $K$, then for every prime $\p$ of~$F$ at which $E$ has good reduction, lying above the rational prime~$p$, the reduction is ordinary if $\p$ splits in~$K$ and supersingular if $\p$ is inert in~$K$.  Moreover, in the ordinary case, the reduced curve~$\overline{E}$ has the same endomorphism ring~$\End(E)$.  (See \cite[Theorem 13.12]{Lang}.)

\subsection{Hilbert Class Polynomials and their properties}\label{subsec:HCPs}

The \defi{Hilbert class polynomial} $H_D(X)$ associated to the (not necessarily fundamental) imaginary quadratic discriminant~$D$ is the polynomial
\[
H_D(X) = \prod_{[\a]\in\ClD}(X-j([\a])).
\]
This monic polynomial of degree~$h(D)$ has coefficients in~$\Z$ and is irreducible over $K=\Q(\sqrt{D})$, see \cite[Theorem 6.1.2]{Schertz}.
Let $F$ be the number field of degree~$h(D)$ obtained by adjoining a root of~$H_D$ to~$\Q$.
The compositum $L\coloneqq KF$ is the \defi{ring class field} of the order~$\OD$.
It is a Galois extension of~$K$ of degree~$h(D)$, and the Galois group $A=\Gal(L/K)$ is isomorphic to~$\ClD$ via the Artin map; in particular, $A$ is abelian of order~$h(D)$.
When $D$ is a fundamental discriminant the ring class field of $\OD$ is the \defi{Hilbert class field} of $K$, its maximal unramified abelian extension, and in general the extension $L/K$ is ramified only at primes that divide the conductor~$f$ of $\OD$ and must be ramified at all odd primes that do.
The extension $L/\Q$ is also Galois, and unramified at primes that do not divide $D$.
The ring class field $L$ is the Galois closure of~$F/\Q$ except when $\ClD$ has exponent~$2$, in which case~$F$ is itself Galois over~$\Q$.
The Galois group $G=\Gal(L/\Q)$ is the extension of~$A$ by~$C_2=\Gal(K/\Q)$; more precisely, $G = A \rtimes C_2$, where~$C_2$ acts on~$A$ by inversion (see \cite[Lemma 9.3]{Cox}).

\begin{remark}
In the special case where $\ClD$ is an elementary $2$-group, this is a direct product, $A$ is normal in~$G$ which is Abelian, and $F/\Q$ is Galois; otherwise, $F/\Q$ has Galois closure~$L$.
We will refer to this special case as the \defi{abelian case}.
It only occurs finitely often, with $h(D)=2^e$ for $e\le4$.
Complete and explicit lists of these \emph{abelian discriminants}~$D$ can be found in Tables~3.7--3.14 of Voight's thesis~\cite{VoightThesis}:
there are 101 of them, consisting of $13$, $29$, $34$,
$21$, and~$4$ for $e=0$, $1$, $2$, $3$, and~$4$, respectively, with the maximal value of $|D|$ equal to $163$, $427$, $1435$, $3315$, and~$7392$, respectively.
\end{remark}

Let $h\coloneqq h(D)$ and let $h_2$ be the order of $A[2]\coloneqq \{a\in A\mid a^2=1\}$.
Then $h_2$ is a power of~$2$ that divides~$h$ and satisfies $h\equiv h_2\pmod2$, with $h_2=1$ if and only if $h$ is odd.
The conjugacy class $a\in A$ in~$G$ is~$\{a,a^{-1}\}$, so $a\in A$ is central in~$G$ if and only if $a\in A[2]$.

The set~$\RR$ of roots of $H_D(X)$ in~$\C$ consists of the values $j(\a)$ as $\a$ runs over ideals representing the ideal classes of~$\OD$.
For each $\a$ we have $F\simeq\Q(j(\a))$ and $L=K(j(\a))$.
For our algorithms we require detailed knowledge of the Galois action on these roots, including the action of complex conjugation and of Frobenius automorphisms; here we again refer to~\cite[Theorem 6.1.2]{Schertz} for the facts we need.

Via the Artin isomorphism $\ClD\simeq A$, the action of $[\a]\in \ClD$ on $j(\b)\in \RR$ is given by
\[
\a\colon j(\b)\mapsto j(\a^{-1}\b) = j(\overline{\a}\b),
\]
and in the nonabelian case the $h(D)$ elements of $G\simeq \ClD \rtimes C_2$ not in $\ClD$ act via
\[
j(\b)\mapsto j(\a\overline{\b}).
\]

\subsubsection*{Action of complex conjugation}

We have
\[
\overline{j(\a)} = j(\overline{\a}) = j(\a^{-1});
\]
thus $j(\a)$ is real if and only if $\a^2$ is principal.
It follows that $h_2$ roots of $H_D$ are real and the rest form $(h-h_2)/2$ pairs of complex conjugates.
We always have at least one real root $j_0\coloneqq j(\OD)$, and the abelian case occurs if and only if all roots are real.

\subsubsection*{Action of Frobenius automorphisms}

Assume for the moment that $D$ is not one of the 101 abelian discriminants for which $\ClD$ is an elementary abelian 2-group.
Let $p$ be a prime such that $H_D(X)$ is squarefree modulo~$p$.
Then $p$ is unramified in~$L=KF$ (hence unramified in both~$K$ and~$F$), and there is a well-defined conjugacy class~$\Frobp\subseteq G$.
The action of~$\Frobp$ on~$\RR$ determines how the reduction of $H_D$ modulo $p$ factors in $\Fp[X]$: there is an irreducible factor of degree $d$ for each orbit of size $d$ (note that the orbit sizes are an invariant of the conjugacy class $\Frobp$).
We have $\Frob_p\subseteq A$ if and only if the prime $p$ splits in $K$.

Let $\p$ be a prime of~$K$ lying above~$p$ and let $\p_D\coloneqq \p \cap \OD$ denote the corresponding $\OD$-ideal, which is invertible because $\p$ is unramified in the ring class field $L$ and therefore prime to the conductor of $\OD$.
Since $A=\Gal(L/K)$ is abelian, there is a well-defined automorphism~$\Frobpp\in A$ which acts on~$\RR$ via
\[
j(\b)^{\Frobpp} = j(\p_D^{-1}\b).
\]
The order of~$\Frobpp\in A$ is the order~$n$ of the ideal class~$[\p_D]\in\ClD$, a divisor of~$h(D)$, and in fact a divisor of the exponent of~$\ClD$.
As a permutation of the roots, $\Frobpp$ acts as a product of~$h(D)/n$ cycles of length~$n$.
Thus~$H_D(X)$ factors over the residue field $\F_\p\coloneqq \OK/\p$ as a product of~$h(D)/n$ irreducible polynomials in $\F_\p[X]$ of degree~$n$.

When~$p$ splits in~$K$, say $p\OK=\p\overline{\p}$, the conjugacy class $\Frobp\subseteq G$ consists of two elements, $\Frobpp$ and~$\Frob_{\overline{\p}}=\Frobpp^{-1}$, each of order~$n$ as above.
We have $\OK/\p\simeq\Fp$, so the reduction of $H_D$ modulo $p$ factors in $\Fp[X]$ as a product of $h(D)/n$ irreducible polynomials of degree~$n$.
In particular, $H_D$ splits completely in~$\Fp[X]$ if and only if~$\p$ is principal, in which case~$p$ also splits completely in~$L$.

On the other hand, if~$p$ is inert in~$K$ then $\p=p\OK$ is principal and $\OK/\p\simeq\F_{p^2}$.
In this case $n=1$, $\Frobpp$ is trivial, and $H_D(X)$ splits completely in $\F_{p^2}[X]$; its factorization in~$\Fp[X]$ is a product of linear or irreducible quadratic factors.
Now $\Frobpp$ acts on the roots of $H_D(X)$ via $j(\b)\mapsto j(\p_D\overline{\b})$.
The number of linear factors is the number of fixed points of this action, which is either~$h_2$ or~$0$, the number of solutions to~$[\b]^2=[\p_D]$.
Thus for inert primes~$p$, the factorization of~$H_D(X)$ in~$\Fp[X]$ is \emph{either} a product of $h(D)/2$ quadratic factors (when $[\p_D]$ is not a square),
\emph{or} a product of~$h_2$ linear and $(h(D)-h_2)/2$ quadratic factors (when $[\p_D]$ is a square).
Note that the second possible factorization pattern in this case is the same as for the factorization of $H_D(X)$ in~$\R[X]$.
If $h(D)$ is odd, then $h_2=1$, and only the latter possibility occurs.

If $D$ is one of the 101 abelian discriminants then the fields $K$ and $F$ are linearly disjoint Galois extensions of $\Q$ and the action of $\Gal(F/\Q)\simeq \Gal(L/K)= A$ does not depend on how $p$ splits in $K$.
In this situation $A$ is an elementary 2-group, $h(D)=h_2$ is a power of $2$, and the reduction of $H_D$ modulo $p$ either splits into $h_2$ linear factors in $\Fp[x]$, or it is a product of $h_2/2$ irreducible quadratic factors in $\Fp[X]$.

\subsection{Galois theoretic constraints}

Using the explicit description of the action of $\Gal(L/\Q)$ and $\Gal(L/K)$ on the set~$\RR$ of roots of $H_D(X)$, we can formulate conditions on the factorization of a given monic irreducible polynomial $H\in \Z[X]$ in $\R[X]$ and $\Fp[X]$ that must be satisfied if $H$ is an HCP.

As noted above, primes $p$ that do not divide $D$ are unramified in $L$, but they may divide the discriminant of the polynomial $H_D(X)$, in which case $H_D$ will not be squarefree in $\Fp[X]$.
However, this cannot happen if the prime $p$ splits in $K$.

\begin{proposition}\label{prop:squarefree}
Let $D$ be an imaginary quadratic discriminant and let $p$ be a prime for which $\left(\frac{D}{p}\right)=+1$.
Then the Hilbert class polynomial $H_D$ is squarefree modulo $p$.
\end{proposition}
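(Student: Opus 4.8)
The plan is to convert the arithmetic hypothesis into a splitting statement and then read off squarefreeness from the Deuring correspondence between CM elliptic curves in characteristic~$0$ and ordinary elliptic curves in characteristic~$p$. Write $D=f^2D_0$ with $D_0=D_K$ and $f$ the conductor of $\OD$. By multiplicativity of the Kronecker symbol in its numerator, $\leg{D}{p}=\leg{f}{p}^2\leg{D_0}{p}$, so the hypothesis $\leg{D}{p}=+1$ forces $p\nmid f$ and $\leg{D_0}{p}=+1$; the latter means precisely that $p$ splits in $K$, say $p\OK=\p\overline{\p}$, and since $p$ divides neither $f$ nor $D_0$ the prime $p$ is unramified in the ring class field $L$ of $\OD$ (recall that $L/K$ ramifies only above divisors of $f$ and $K/\Q$ only above divisors of $D_0$).

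Squarefreeness of $H_D$ modulo $p$ is equivalent to the assertion that the $h(D)$ roots $j([\a])$ of $H_D$ have pairwise distinct reductions modulo some prime $\mathfrak{P}$ above $\p$ in a number field $M\supseteq L$ chosen large enough that the relevant curves acquire good reduction. Such an $M$ exists because a CM elliptic curve has integral $j$-invariant, hence potential good reduction everywhere, and base change changes neither the geometric endomorphism ring nor the $j$-invariant. For each root $j([\a])$ I would fix a curve $E_\a/M$ with that $j$-invariant, $\End(E_\a)\cong\OD$, and good reduction at $\mathfrak{P}$; since $\p$ splits in $K$, the theorem of Lang recalled in Section~\ref{subsec:basicCMfacts} shows each reduction $\overline{E}_\a$ is ordinary with $\End(\overline{E}_\a)\cong\OD$. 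Thus reduction mod $\mathfrak{P}$ maps the root set $\RR$ of $H_D$ into the set $S$ of $j$-invariants of ordinary elliptic curves over $\Fpbar$ with CM by $\OD$.

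The crux is that this reduction map $\RR\to S$ is injective. The set $\RR$ is a principal homogeneous space under $\ClD$ via the action of ideal classes recalled in Section~\ref{subsec:HCPs}; the class group acts \emph{freely} on $S$ (if $\b\ast E\cong E$ for an integral ideal $\b$ prime to $pf$, composing the natural isogeny $E\to\b\ast E$ of kernel $E[\b]$ with that isomorphism gives an endomorphism of $E$ with kernel $E[\b]$, forcing $\b=(\alpha)$ principal); and reduction at $\mathfrak{P}$ intertwines these actions, because the isogeny $E_\a\to E_{\b^{-1}\a}$ with kernel $E_\a[\b]$ has degree $N(\b)$ prime to $p$ and therefore reduces to an isogeny of the same degree with kernel $\overline{E}_\a[\b]$, reduction being faithful on prime-to-$p$ torsion. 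Any equivariant map from a torsor to a set carrying a free action is injective, so the $h(D)$ roots of $H_D$ have $h(D)$ distinct images; hence $H_D\bmod p$ is squarefree. (Equivalently, one may simply invoke Deuring's theorem that reduction at a prime above one that splits in $K$ and is prime to $f$ gives a bijection from the $\overline{\Q}$-curves with CM by $\OD$ onto the ordinary $\Fpbar$-curves with CM by $\OD$; see \cite{Lang}.)

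The step I expect to be the genuine obstacle is exactly the input above that is the substance of the Deuring correspondence—the freeness of the $\ClD$-action on $S$ and the compatibility of reduction with the ideal-class actions—together with the routine bookkeeping needed to enlarge $L$ to a field of good reduction while verifying that $\End$ and $j$ are undisturbed; note that $p\nmid f$ enters precisely to keep $N(\b)$ prime to $p$. The remaining ingredients—the Kronecker-symbol identity, the torsor structure on $\RR$, and the formal injectivity lemma—are immediate.
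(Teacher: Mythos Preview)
Your proof is correct and takes essentially the same approach as the paper: both reduce the CM curves at a prime above the split prime $p$ to ordinary curves with endomorphism ring $\OD$, and then argue that an isogeny coming from a nonprincipal ideal class cannot collapse to an isomorphism because $\OD$ does not contain the endomorphism this would produce. The paper makes this concrete by choosing a prime ideal $\l$ in the class $[\a_1\a_2^{-1}]$ and observing that $\OD$ has no element of norm $\ell=N(\l)$, whereas you package the same idea as freeness of the $\ClD$-action on the ordinary locus together with equivariance of reduction---the torsor formulation of exactly the same argument.
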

\begin{proof}
Let $K=\Q(\sqrt{D})$, let $p\OK=\p\bar\p$, and let $\q$ be a prime of $L$ above $\p$.
As noted above, $p$ is unramified in $L$, so $\q$ is unramified and its residue field $\Fq\coloneqq\mathcal{O}_L/\q$ is an extension of $\Fp=\F_\p=\OK/\p$.
Since the roots of~$H_D$ all lie in~$\OO_L$, it splits into linear factors over~$\Fq$, and we must show that these are distinct, or equivalently that $j([\a_1])\not=j([\a_2])$ implies $j([\a_1])\not\equiv j([\a_2])\pmod{\q}$.
We use the fact (see the end of subsection~\ref{subsec:basicCMfacts}) that every elliptic curve~$E$ with CM by~$\OD$ has good ordinary reduction modulo~$\q$, and that the reduction~$\overline{E}$ has the same endomorphism ring, $\OD$.
Given distinct $j_1=j([\a_1])$ and~$j_2=j([\a_2])$, we have $[\a_1]=[\a_2][\l]$, where~$\l$ is a nonprincipal ideal, and we may take~$\l$ to be a prime ideal, since the ideal class~$[\a_1\a_2^{-1}]$ contains infinitely many prime ideals.
Let~$\ell=N(\l)$; since~$\l$ is not principal, $\ell$ is prime and~$\OD$ has no elements of norm~$\ell$.
Let $E_1$, $E_2$ be elliptic curves defined over~$L$ with $j$-invariants~$j_1$ and~$j_2$ respectively; then there is an~$\ell$-isogeny $E_1\to E_2$, which reduces to an~$\ell$-isogeny between their reductions, $\overline{E_1}$ and~$\overline{E_2}$.
As these have endomorphism ring~$\OD$, they have no endomorphisms of degree~$\ell$, so $\overline{E_1}$ and~$\overline{E_2}$ and not isomorphic, even over the algebraic closure of~$\Fq$; thus $j([\a_1])\not\equiv j([\a_2])\pmod{\q}$ as required.
\end{proof}

\begin{remark}
As pointed out by a referee, an alternative proof is possible using the theory of canonical lifts for curves with ordinary reduction.
\end{remark}

\subsubsection*{Factorization over~$\R$}\label{subsec:real}

Recall that we have fixed an embedding $L\hookrightarrow \C$, so we can identify complex conjugation~$c$ as an element of~$G$, and $G\setminus A = \{ac\mid a\in A\}$.
The set of roots is $\{a(j_0)\mid a\in A\}$, and the subset of real roots is $\{a(j_0)\mid a\in A[2]\}$, with cardinality~$h_2$.
This gives the first necessary condition for $H$ to be an HCP.

\begin{proposition} \label{prop:real-roots}
Let $H(X)\in\Z[X]$ be a polynomial of degree~$h$ with exactly $h^+$ real roots.
If $H$ is a Hilbert class polynomial then the following hold:
\begin{enumerate}[(1)]
\item $h^+\mid h$;
\item $h^+$ is a power of~$2$;
\item $h^+\equiv h\pmod{2}$; that is, $h^+=1$ if and only if $h$ is odd.
\end{enumerate}
\end{proposition}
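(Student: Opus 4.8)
The plan is to reduce all three assertions to facts about the $2$-torsion subgroup of the class group. Assume $H = H_D$ and set $h = h(D)$, $A \simeq \ClD$ as in Section~\ref{subsec:HCPs}. The key step is to identify $h^+$ with $h_2 := |A[2]|$, where $A[2] = \{a \in A : a^2 = 1\}$. For this I would invoke the formula for the action of complex conjugation recorded above, namely $\overline{j(\a)} = j(\a^{-1})$ for every invertible $\OD$-ideal $\a$. Since the roots of $H_D$ are the pairwise distinct values $j([\a])$ indexed by $[\a] \in \ClD$, a root $j([\a])$ is fixed by complex conjugation — equivalently, is real — precisely when $[\a] = [\a]^{-1}$, i.e.\ when $[\a] \in A[2]$. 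Hence complex conjugation restricts to a fixed-point-free involution on the non-real roots, the real roots are in bijection with $A[2]$, and $h^+ = h_2$.

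Once $h^+ = h_2$ is established, the three conclusions are exactly the elementary group-theoretic facts about $2$-torsion in a finite abelian group already stated in Section~\ref{subsec:HCPs}. Claim (1) is Lagrange's theorem: $A[2]$ is a subgroup of $A$, so $h_2 = |A[2]|$ divides $|A| = h$. Claim (2) holds because every element of $A[2]$ has order dividing $2$, so $A[2]$ is an elementary abelian $2$-group and its order $h_2$ is a power of $2$. Claim (3) follows because $a \mapsto a^{-1}$ is a fixed-point-free involution of the complement $A \setminus A[2]$, so $h - h_2 = |A \setminus A[2]|$ is even, i.e.\ $h \equiv h_2 \pmod 2$; in particular $h_2 = 1$ if and only if $h$ is odd, equivalently $A$ has odd order.

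I do not expect any genuine obstacle. The entire content of the proposition is the identification of the real roots with $A[2]$, and that is an immediate consequence of the transcription $\overline{j(\a)} = j(\a^{-1})$ together with the separability of $H_D$ over $\C$; the remainder is routine bookkeeping with a finite abelian group. The only point needing a word of care is that the values $j([\a])$ really are distinct, which is built into the definition of $H_D$ as a monic polynomial of degree $h(D)$ with one root for each ideal class of $\OD$.
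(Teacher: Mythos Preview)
Your proposal is correct and matches the paper's approach essentially verbatim: the paper derives the proposition from the preceding discussion under ``Action of complex conjugation,'' where it records $\overline{j(\a)} = j(\a^{-1})$, concludes that the real roots are exactly those indexed by $A[2]$ so that $h^+=h_2$, and then invokes the already-stated elementary facts that $h_2$ is a power of~$2$ dividing~$h$ with $h\equiv h_2\pmod 2$. There is no formal proof environment in the paper for this proposition; your write-up simply makes explicit what the paper leaves to the surrounding text.
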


For example, an HCP of odd degree has exactly one real root, so an odd degree polynomial with any other number of real roots is not an HCP.

\subsubsection*{Application to CM testing}

Given a monic irreducible $H\in\Z[X]$
of degree~$h$, compute the number of real roots~$h^+$.
If any of the three conditions in Proposition~\ref{prop:real-roots} does not hold then $H$ is not an HCP.

\begin{remark}
The function {\tt CMtest} in \Magma{}~(version 2.27-5) determines the number of real roots and checks that it is a power of~$2$, but the other conditions of Proposition~\ref{prop:real-roots} are not checked.
See \S\ref{subsec:magma} below for more details.
\end{remark}

\begin{remark}
In practice, finding the real roots of an integer polynomial of large degree with large coefficients can be very time consuming.
Even just determining the number of real roots (using Sturm sequences, for example) can be expensive.
Our algorithms do not use this criterion.
\end{remark}

\subsubsection*{Factorization over~$\Fp$}

Let $p$ be a prime such that $H_D$ is squarefree modulo~$p$.
Our earlier discussion shows that the factorization of $H_D$ in $\Fp[X]$ must be one of the following, depending on whether $p$ is split or inert in $K$, or if $D$ is one of the 101 abelian discriminants.
As above, we put $h=h(D)=\deg(H_D)$ and $h_2=\#\ClD[2]$.
\begin{itemize}
\item (split) $h(D)/n$ irreducible factors of degree~$n$, for some $n\mid h$;
\item (inert) $h_2$ factors of degree $1$, and $(h-h_2)/2$ irreducible factors of degree~$2$;
\item (abelian) either $h_2$ factors of degree $1$, or $h_2/2$ factors of degree $2$.
\end{itemize}
Since the number of linear factors $h_2$ is equal to the number of real roots $h^+$ of $H_D$ it must satisfy conditions (1)--(3) of Proposition~\ref{prop:real-roots}.
Note that in the abelian case $\ClD$ is an elementary 2-group and $h=h_2$.
In this situation the factorization of $H$ in $\Fp[X]$ does not determine how $p$ splits in $K$.

Recall that the \emph{supersingular polynomial} in characteristic~$p$ is the monic polynomial $\ss_p\in\Fp[X]$ whose roots are the supersingular $j$-invariants in characteristic~$p$.

\begin{proposition} \label{prop:mod-p-factors}
Let $H$ be a Hilbert class polynomial with $h^+$ real roots, and let $p$ be a prime for which the reduction $\overline H$ of $H$ modulo $p$ is squarefree in $\Fp[X]$.
Among the irreducible factors of $\overline H$
\begin{itemize}
\item[either] all irreducible factors have the same degree
\item[or] there are $h^+$ linear factors and all others factors have degree~$2$.
\end{itemize}
Moreover, if both linear and nonlinear factors occur, then $\overline H$ divides $\ss_p$.
\end{proposition}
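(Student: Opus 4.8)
The plan is to deduce both assertions directly from the explicit description, worked out in subsection~\ref{subsec:HCPs}, of how the reduction of $H_D$ can factor in $\Fp[X]$. Write $H=H_D$, put $h=h(D)=\deg H$ and $h_2=\#\ClD[2]$, and recall from the discussion of complex conjugation that the number of real roots satisfies $h^+=h_2$. Since $\overline H$ is squarefree modulo $p$, the prime $p$ is unramified in the ring class field $L=KF$, hence unramified in $K=\Q(\sqrt D)$, so $p$ is either split or inert in $K$; separately one must allow $D$ to be one of the $101$ abelian discriminants.

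First I would settle the dichotomy by running through the three cases. If $D$ is not abelian and $p$ splits in $K$, then $\overline H$ is a product of $h/n$ irreducible factors all of degree $n=\ord([\p_D])$. If $D$ is not abelian and $p$ is inert in $K$, then $\overline H$ is either a product of $h/2$ irreducible quadratics or a product of $h_2$ linear factors together with $(h-h_2)/2$ irreducible quadratics. If $D$ is abelian then $h=h_2$ and $\overline H$ is either $h_2$ linear factors or $h_2/2$ irreducible quadratics. In each case $\overline H$ either has all its irreducible factors of the same degree or has exactly $h_2=h^+$ linear factors with all others of degree $2$. Moreover the only pattern producing both linear and nonlinear factors is the second pattern of the inert case: the split and abelian patterns never mix degrees, and when $D$ is not abelian one has $h_2<h$, so the mixed inert pattern is genuinely mixed. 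In particular a mixed factorization forces $p$ to be inert in $K$.

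For the ``moreover'' claim, assume $\overline H$ has both linear and nonlinear factors, so by the previous step $p$ is inert in $K$. The roots of $H_D$ are the singular moduli $j(\a)$, each of which is the $j$-invariant of an elliptic curve $E$ with CM by $\OD$, in particular with CM field $K$; such a curve is defined over $L=K(j(\a))$. Fix a prime $\q$ of $L$ lying over $p$. Then $\q$ is prime to the conductor of $\OD$ (as $p$ is unramified in $L$, just as in the proof of Proposition~\ref{prop:squarefree}), so $E$ has good reduction modulo $\q$, and since $p$ is inert in $K$ this reduction is supersingular by the criterion recalled at the end of subsection~\ref{subsec:basicCMfacts}. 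Hence the reduction modulo $\q$ of every root of $H_D$ is a supersingular $j$-invariant in $\Fpbar$, i.e.\ a root of $\ss_p$. Thus every root of $\overline H$ is a root of $\ss_p$; since $\overline H$ is squarefree and both polynomials are monic in $\Fp[X]$, we conclude $\overline H\mid\ss_p$.

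Almost all of this is bookkeeping with the case analysis already in hand. The step I would treat most carefully is the one asserting that a mixed factorization cannot arise in the split case or in the abelian case — so that it genuinely forces $p$ to be inert — and then making sure the supersingular reduction result is applied at the right primes, namely primes of $L$ above the rational prime $p$, rather than at primes of $K$ or of $\Q(j(\a))$. I do not anticipate a real difficulty there.
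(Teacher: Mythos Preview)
Your proposal is correct and follows essentially the same approach as the paper: deduce the dichotomy from the case analysis already carried out in \S\ref{subsec:HCPs}, then observe that a mixed factorization forces $p$ inert in $K$, whence the roots reduce to supersingular $j$-invariants. One small imprecision: the clause ``$\q$ is prime to the conductor of $\OD$ \ldots\ so $E$ has good reduction modulo $\q$'' conflates the conductor of the order with the reduction type of the curve; the cleaner justification is simply that $j(\a)$ is an algebraic integer, so any elliptic curve with this $j$-invariant has potential good reduction at~$\q$, and one may choose a model (or pass to a finite extension) with good reduction before invoking the ordinary/supersingular criterion from \S\ref{subsec:basicCMfacts}.
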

\begin{proof}
Except for the last sentence, the proposition follows from the discussion above.
For the last sentence, note that if both linear and nonlinear factors occur then the nonlinear factors must have degree 2 and $p$ must be inert in~$K$, so that the roots of~$\overline{H}$ are supersingular.
\end{proof}

\subsubsection*{Application to CM testing}

Given monic irreducible $H\in\Z[X]$ of degree~$h$, consider the factorization of the reduction $\overline H$ of~$H$ modulo~$p$ in $\Fp[X]$ for successive primes~$p$, omitting those for which $\overline H$ is not squarefree.
If the irreducible factors do not all have the same degree and the maximum degree is greater than~$2$, then $H$ is not an HCP.
Otherwise, if degrees~$1$ and~$2$ occur but the number of linear factors does not satisfy conditions (1)--(3) of Proposition~\ref{prop:real-roots}, then $H$ is not an HCP.

\begin{remark}
  For a detailed account of the factorization of Hilbert Class Polynomials over prime fields, see the preprint \cite{LiLiOuyang}.
\end{remark}

\subsection{Galois representation constraints}\label{subsec:GalRep}

Let $j\ne 0,1728$ be an algebraic integer.
Let $E_j$ be the elliptic curve over $F=\Q(j)$ with $j(E_j)=j$ defined by the equation
\[
y^2 = x^3 -3 j (j-1728) x -2 j (j-1728)^2
\]
with discriminant $\Delta_j\coloneqq 2^{12} 3^{6} j^2 (j - 1728)^3$, so that $E_j$ has good reduction at primes not dividing $6j(j-1728)$.
Let $G_F\coloneqq \Gal(\overline{F}/F)$ denote the absolute Galois
group of~$F$. For each rational prime $\ell$, we have the mod-$\ell$ Galois
representation~$\overline{\rho}_{E_j,\ell}\colon G_F\to\GL_2(\Fl)$
given by the action of $G_F$ on $E_j[\ell]$ (after fixing an $\Fl$-basis).

Suppose $E_j$ has CM by $\OD$ for $D=D_0f^2$, and let $K=\Q(\sqrt{D})=\Q(\sqrt{D_0})$.
Then the image of~$\overline{\rho}_{E,\ell}$ in~$\GL_2(\Fl)$ lies in either the normalizer of a split Cartan subgroup, the normalizer of a non-split Cartan subgroup, or a Borel subgroup of~$\GL_2(\Fl)$, depending on whether $\ell$ is split, inert or ramified in~$K$, respectively.

Let $\p$ be a prime of~$F$ not dividing~$\Delta_j$ so that $E_j$ has good reduction $\overline{E}_j$ at~$\p$.
Let $\F_\p\coloneqq \OO_F/\p$ denote the residue field of $\p$, let $a_{\p}\coloneqq N(\p)+1-\#\overline{E}_j(\F_\p)$ denote the trace of Frobenius,
and consider the quadratic polynomial $f_{\p}(X)=X^2-a_{\p}X+N(\p) \in\Z[X]$.
If $\p\nmid\ell$, then $\overline{\rho}_{E_j,\ell}$ is unramified at~$\p$, and $f_{\p}(X)$ is the characteristic polynomial of $M_{\p,\ell} = \rho_{E_j,\ell}(\Frob\p)\in\GL_2(\Zl)$.
Let $\delta_{\p} = a_{\p}^2-4N(\p) \in \Z$ be the discriminant of~$f_{\p}(X)$, and let $\overline{M}_{\p,\ell}\in\GL_2(\Fl)$ denote the reduction of~$M_{\p,\ell}$ modulo~$\p$.

In both the split and non-split cases, $\overline{M}_{\p,\ell}$ lies in a Cartan subgroup (rather than its normalizer) if and only if $\p$ splits in $KF/F$ (by the argument used in the proof of \cite[Lemma~7.2]{Zywina}, for example), which occurs if and only if $D$ is a square modulo~$\p$; this will certainly be the case if the residue field $\F_\p$ has even degree over its prime field $\Fp$, since it will then contain $\F_{p^2}$.
Otherwise (when $D$ is not a square modulo~$\p$), $\overline{M}_{\p,\ell}$ lies in the nontrivial coset of the Cartan in its normalizer, in which case its trace satisfies $a_{\p}\equiv0\pmod{\ell}$.
Note that we can also have $a_{\p}\equiv0\pmod{\ell}$ when $\overline{M}_{\p,\ell}$ is in the Cartan subgroup.

Define a prime~$\p$ of~$F$ to be \emph{revealing} if
\begin{itemize}
\item $\p\nmid \Delta_j$; and
\item $a_{\p}^2 \notin \{0,4N(\p)\}$.
\end{itemize}

\begin{proposition}
With notation as above, the squarefree part of $\delta_{\p}$ equals the squarefree part of~$D$ for all revealing primes~$\p$, so that $K=\Q(\sqrt{D})=\Q(\sqrt{\delta_{p}})$.
In particular, the squarefree part of $\delta_{\p}$ is independent of~$\p$, for all revealing primes~$\p$ of~$K$.
\end{proposition}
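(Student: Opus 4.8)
The plan is to prove the equivalent statement that $\Q(\sqrt{\delta_{\p}})=K$ for every revealing prime~$\p$; since two negative integers have the same squarefree part precisely when they generate the same imaginary quadratic field, this is the same as the assertion about squarefree parts, and it makes the independence of~$\p$ immediate. So I would first record the two elementary consequences of the revealing hypothesis. The Hasse bound gives $a_{\p}^{2}\le 4N(\p)$, and since $a_{\p}^{2}\ne 4N(\p)$ we get $\delta_{\p}=a_{\p}^{2}-4N(\p)<0$; in particular the Frobenius endomorphism $\pi_{\p}$ of $\overline{E}_{j}/\F_{\p}$ — a root of $f_{\p}(X)=X^{2}-a_{\p}X+N(\p)$ — is non-real, so $\pi_{\p}\notin\Z$ and $\Z[\pi_{\p}]$ is an imaginary quadratic order of discriminant $\delta_{\p}$ sitting inside $\End_{\F_{\p}}(\overline{E}_{j})$.

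Next I would pass to the ring class field $L=KF$, over which $E_{j}$ has $\End_{L}(E_{j})=\OD$ (the ring class field $K(j)$ is exactly the field of definition of the CM). Fix a prime $\mathfrak{P}$ of~$L$ above~$\p$; then $E_{j}$ has good reduction at $\mathfrak{P}$, the residue degree $r:=[\F_{\mathfrak{P}}:\F_{\p}]$ satisfies $r\le[L:F]\le 2$, and the $\#\F_{\mathfrak{P}}$-power Frobenius of $\overline{E}_{j}$ is $\pi_{\mathfrak{P}}=\pi_{\p}^{r}$. Because reduction of endomorphisms is injective, the ring $R:=\End_{\F_{\mathfrak{P}}}(\overline{E}_{j})$ contains the reductions of both $\OD$ and $\Z[\pi_{\p}]$, so $R\otimes\Q$ contains the two quadratic fields $K$ and $\Q(\sqrt{\delta_{\p}})=\Q(\pi_{\p})$. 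If I can show that $R$ is commutative I am done: by Deuring's classification $R\otimes\Q$ is then the imaginary quadratic field $\Q(\pi_{\mathfrak{P}})$ (the only other possibility, $R\otimes\Q=B_{p,\infty}$, occurs exactly when $\pi_{\mathfrak{P}}\in\Z$), and a $\Q$-algebra embedding of one quadratic field into another is an isomorphism, forcing $K=R\otimes\Q=\Q(\sqrt{\delta_{\p}})$.

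So the remaining task is to prove that $R$ is commutative, equivalently that $\pi_{\mathfrak{P}}=\pi_{\p}^{r}\notin\Z$. If $r=1$ this says $\pi_{\p}\notin\Z$, which we established above. If $r=2$, suppose $\pi_{\p}^{2}\in\Z$; since $\pi_{\p}\notin\Z$ it is then a non-real square root of an integer, hence purely imaginary, so $\overline{\pi_{\p}}=-\pi_{\p}$ and $a_{\p}=\pi_{\p}+\overline{\pi_{\p}}=0$, contradicting $a_{\p}^{2}\ne 0$. In both cases $\pi_{\mathfrak{P}}\notin\Z$, so $R$ is commutative and the proposition follows.

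The one genuinely delicate point is this last step, i.e.\ controlling the reduction when it is \emph{supersingular}: there $R$ could a priori be the maximal order in $B_{p,\infty}$ — equivalently $\pi_{\mathfrak{P}}\in\Z$ — in which case $\delta_{\p}$ would carry no information about~$D$. What makes the argument go through is that $[L:F]\le 2$ bounds~$r$, so $\pi_{\mathfrak{P}}\in\Z$ could only arise from $\pi_{\p}\in\Z$ (excluded by $a_{\p}^{2}\ne 4N(\p)$) or from $\pi_{\p}^{2}\in\Z$ with $\pi_{\p}$ non-real (excluded by $a_{\p}\ne 0$); thus the two clauses in the definition of a revealing prime are exactly what is needed. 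Everything else — the reformulation in terms of quadratic fields, injectivity of reduction on $\End$, the equality $\End_{L}(E_{j})=\OD$, and Deuring's dichotomy for $\End_{\F_{\mathfrak{P}}}(\overline{E}_{j})\otimes\Q$ — is standard (see e.g.\ \cite{Lang}).
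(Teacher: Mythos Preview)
Your argument is correct, but it follows a genuinely different route from the paper's. The paper works on the Galois-representation side: for each prime~$\ell$ unramified in~$K$ and not dividing $a_{\p}\delta_{\p}$, it uses the fact that the image of $\overline{\rho}_{E_j,\ell}$ lies in the normalizer of a (split or nonsplit) Cartan subgroup, locates $\overline{M}_{\p,\ell}$ inside the Cartan itself (using $a_{\p}\not\equiv 0$), and reads off whether its eigenvalues lie in~$\Fl$ to conclude $\leg{\delta_{\p}}{\ell}=\leg{D}{\ell}$ for all but finitely many~$\ell$. You instead argue on the endomorphism-ring side: you embed both $\OD$ (via reduction of the CM, after base change to the ring class field~$L$) and $\Z[\pi_{\p}]$ into $\End_{\F_{\mathfrak P}}(\overline{E}_j)$, and then use the bound $[\L:\F_{\p}]\le 2$ together with the revealing hypothesis to rule out $\pi_{\mathfrak P}\in\Z$, forcing this ring to be commutative and hence an order in a single imaginary quadratic field containing both $K$ and $\Q(\sqrt{\delta_{\p}})$. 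Your approach is more geometric and arguably more transparent about \emph{why} the two conditions $a_{\p}\ne 0$ and $a_{\p}^2\ne 4N(\p)$ are exactly what is needed (they kill the two ways $\pi_{\p}^r$ could land in~$\Z$ for $r\le 2$); the paper's approach, by contrast, fits naturally into the Galois-representation framework of \S\ref{subsec:GalRep} and avoids having to pass to the extension~$L$ or invoke Deuring's classification of $\End_{\F_q}$ in the supersingular case.
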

\begin{proof}
Let $\p$ be a revealing prime.
Let $\ell$ be a rational prime such that $\ell\nmid D_0a_{\p}\delta_{\p}$ and $\p\nmid\ell$; these conditions exclude only finitely many primes $\ell$.
Since $\ell\nmid D_0$, the image $\overline{\rho}_{E_j,\ell}(G_F)$ is the normalizer of a Cartan subgroup.
The matrix $\overline{M}_{\p,\ell}$ lies in the Cartan subgroup itself since its trace $a_{\p}\not\equiv0\pmod{\ell}$, and since $\delta_{\p}\not\equiv0\pmod{\ell}$, it has distinct eigenvalues.

In the split case when $\leg{D}{\ell}=+1$, the eigenvalues of~$\overline{M}_{\p,\ell}$ lie in $\F_{\ell}$, so $\leg{\delta_{\p}}{\ell}=+1$.
In the nonsplit case, $\leg{D}{\ell}=-1$ and $\overline{M}_{\p,\ell}$ has conjugate eigenvalues in $\F_{\ell^2}$, so $\leg{\delta_{\p}}{\ell}=-1$.
Hence $\leg{\delta_{\p}}{\ell}=\leg{D}{\ell}$ for all but finitely many primes~$\ell$, and the result follows.
\end{proof}

\subsubsection*{Application to CM testing}

Given a monic irreducible $H\in\Z[X]$ of degree~$h$, let $F$ be the number field it defines, and let~$j$ be a root of~$H$ in~$F$.
Assuming that $j\not=0,1728$, let $E_j$ be the elliptic curve over~$F$ defined above, and compute $\Delta_j=2^{12} 3^{6} j^2 (j - 1728)^3$.
Initialize $d=0$.
Loop through primes~$\p$ of~$F$ not dividing~$\Delta_j$, compute the trace of Frobenius~$a_{\p}$ of~$E_j$ at~$\p$ and $\delta_{\p}=a_{\p}^2-4N(\p)$.
Skip~$\p$ if either~$a_{\p}=0$ or~$\delta_{\p}=0$.
If $d=0$ set $d=\delta_{\p}$, otherwise test whether $\delta_{\p}d$ is a square.
If it is not then $H$ is not an HCP.
Otherwise we may proceed to the next prime~$\p$.
If after checking many primes $\p$ we have failed to prove that $H$ is not an HCP then it likely is an HCP whose discriminant has the same squarefree part as~$d$.

\subsubsection*{Refinement}

Whenever $\delta_{\p}d$ is a square, replace $d$ by $\gcd(d,\delta_{\p})$ before continuing.
At the end, write $d=D_0f_1^2$ with~$D_0$ a fundamental discriminant.
Then $H$ is likely to be an HCP, and if it is, its discriminant must have the form $D=f^2D_0$ with $f\mid f_1$.
Compute $h(f_1^2D_0)$; if it does not divide $h=\deg(H)$ then $H$ is not an HCP.
If it does, we could then finish the test decisively by first computing $h(f^2D_0)$ for all such~$f$, testing whether~$h=h(f^2D_0)$, and if so, computing $H_{f^2D_0}$ and comparing with~$H$.\\

To justify the refinement, note that for each revealing prime~$\p$, the reduction of $E_j$ modulo~$\p$ is an ordinary elliptic curve $\overline{E}_j$, whose endomorphism ring is the same as that of~$E_j$, and whose Frobenius has discriminant~$\delta_{\p}$.
Hence the endomorphism ring of $\overline{E}_j$, which contains this Frobenius, has discriminant dividing~$\delta_{\p}$, with a square cofactor.
This holds for all the revealing primes used, so if $E_j$ is indeed a CM curve then $\End(E_j)$ is a quadratic order whose discriminant divides all these~$\delta_{\p}$ and hence divides their~$\gcd$, with square cofactors.
Here we have used the fact that $\End(\overline{E}_j)=\End(E_j)$ when $E_j$ is a CM elliptic curve and $\overline{E}_j$ is its reduction at a good ordinary prime.

\begin{remark}
Although our algorithm does not use this method, we include it here as it was the default method used in \Sage\ (version~9.7): see section~\ref{subsec:sage} below.
\end{remark}

\subsection{Modular polynomials}

Elliptic curves over~$\Qbar$ with CM have the property that they are isogenous (over~$\Qbar$) to all their Galois conjugates; that is, they are $\Q$-curves.
This can also happen for non-CM elliptic curves, but it is relatively rare for a non-CM elliptic curve to be isogenous to any of its Galois conjugates and it is impossible for a non-CM elliptic curve to admit two cyclic isogenies of the same degree with distinct kernels that lead to Galois conjugates, since this will force a cycle of isogenies among the Galois conjugates and a non-integer endomorphism.
We can use this dichotomy as the basis for a CM test.

It is convenient to define two algebraic numbers $j,j'$ to be \emph{isogenous} if elliptic curves $E,E'$ with these $j$-invariants are isogenous over~$\Qbar$.
This is an equivalence relation on~$\Qbar$, and if $j\in\Qbar$ is CM then so are all isogenous algebraic numbers.
Given~$j$ and a positive integer~$n$, the $j'$ which are $n$-isogenous to~$j$ (that is, isogenous via a cyclic $n$-isogeny) are the roots of the polynomial~$\Phi_n(X,j)$, where~$\Phi_n(X,Y)\in\Z[X,Y]$ is the $n$th classical modular polynomial.
When~$\ell$ is prime, $\Phi_{\ell}(X,j)$ has degree~$\ell+1$, and when $j$ is non-CM its roots are distinct.
Note that $j$ itself can be a root of~$\Phi_{\ell}(X,j)$ if $j$ is CM and the associated order has a principal ideal of norm~$\ell$.
We are interested in the roots of $\Phi_\ell(X,j)$ which are Galois conjugates of~$j$, possibly including~$j$ itself.

Let $m_{\ell}(j)$ denote the number of Galois conjugates of~$j$ that are roots of~$\Phi_{\ell}(X,j)$, counting multiplicities.
Then
\begin{equation} \label{eqn:res-formula}
m_{\ell}(j) = \ord_{H}(\Res_Y(H(Y),\Phi_{\ell}(X,Y))),
\end{equation}
where $H$ is the minimal polynomial of~$j$, and $\ord_H(F)$ is the
exponent of~$H$ in the factorization of~$F\in\Z[X]$.  The value
of~$m_{\ell}(j)$ is at most~$2$, as we will deduce from the following
result giving the complete factorization
of~$\Res_Y(H(Y),\Phi_{\ell}(X,Y))$, which has degree
$(\ell+1)\deg(H)$.

\begin{proposition} \label{prop:LuminyFormula}
Let $D=f^2D_0$ be a CM discriminant and let $\ell$ be prime.  Set
$e=\ord_{\ell}(f)\ge0$,  $\varepsilon =
\left(\frac{D_0}{\ell}\right)\in\{0,\pm 1\}$, and $w(D)=\frac{1}{2}\#\OO_D^{\times}$.  Then
\[
\Res_Y(H_D(Y),\Phi_{\ell}(X,Y)) = H_{\ell^{2}D}(X)^{w(D)} \cdot
\begin{cases}
H_D(X)^{1+\varepsilon} & \text{if $e=0$;} \\
H_{D/\ell^2}(X)^{(\ell-\varepsilon)/w(D/\ell^2)} & \text{if $e=1$;} \\
H_{D/\ell^2}(X)^{\ell} & \text{if $e\ge2$.}
\end{cases}
\]
\end{proposition}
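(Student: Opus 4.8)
The plan is to convert the resultant into a count of $\ell$-isogenies between CM curves. Since $H_D$ is monic with roots $j([\a])$ for $[\a]\in\ClD$, the multiplicativity of the resultant gives
\[
\Res_Y(H_D(Y),\Phi_\ell(X,Y)) \;=\; \prod_{[\a]\in\ClD}\Phi_\ell\bigl(X,j([\a])\bigr),
\]
and by the construction of the classical modular polynomial we have $\Phi_\ell(X,j(E)) = \prod_C\bigl(X-j(E/C)\bigr)$, the product running over the $\ell+1$ cyclic subgroups $C$ of order~$\ell$ in~$E$ (equivalently, over the index-$\ell$ overlattices of~$\a$ when $E=\C/\a$). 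Hence for every $j'\in\C$ the multiplicity of $X-j'$ in the resultant equals $\sum_{[\a]\in\ClD}\#\{\,C\subset E_{[\a]}:\,|C|=\ell,\ j(E_{[\a]}/C)=j'\,\}$, where $E_{[\a]}$ denotes a curve with $j(E_{[\a]})=j([\a])$; the task is to evaluate these multiplicities.

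The engine for this is the structure of the $\ell$-isogeny graph of CM elliptic curves over~$\C$ (the $\ell$-volcano): a curve $\ell$-isogenous to a CM curve is again CM, and from a vertex whose endomorphism ring has conductor~$c$ the $\ell+1$ outgoing cyclic $\ell$-isogenies land on curves whose endomorphism ring has conductor $c/\ell$ (exactly one of them, and only when $\ell\mid c$), conductor~$c$ (exactly $1+\leg{D_0}{\ell}$ of them when $\ell\nmid c$, and none when $\ell\mid c$), or conductor~$\ell c$ (all the rest). Applied with $c=f$ this shows that the only discriminants occurring among the roots of the resultant are $D$ (only if $e=0$), $\ell^2D$, and $D/\ell^2$ (only if $e\ge1$); since Hilbert class polynomials of distinct discriminants are coprime, the resultant must equal $H_D(X)^a\,H_{\ell^2D}(X)^b\,H_{D/\ell^2}(X)^c$ for some non-negative integers $a,b,c$. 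To pin these down, fix a root $j'=j([\a'])$ of one of these three polynomials and set $E'=E_{[\a']}$. Writing $N(A,B)$ for the number of cyclic $\ell$-isogenies $A\to B$, we have $\#\{C\subset A:A/C\cong B\}=N(A,B)/\#\Aut(B)$ and $N(A,B)=N(B,A)$; using also the bijection $[\a]\mapsto j([\a])$ between $\ClD$ and the roots of~$H_D$, and the fact that all discriminant-$D$ curves have the same automorphism group of order $2w(D)$, the multiplicity of $X-j'$ in the resultant works out to
\[
\frac{1}{\#\Aut(E')}\sum_{[\a]\in\ClD}N(E',E_{[\a]}) \;=\; \frac{2w(D)}{\#\Aut(E')}\cdot\#\{\,C\subset E':\ E'/C\ \text{has discriminant}\ D\,\}.
\]
Now $E'$ has conductor~$f$, $\ell f$, or $f/\ell$ according to whether $j'$ has discriminant~$D$, $\ell^2D$, or $D/\ell^2$, and in these three cases the volcano structure at~$E'$ makes the inner count the number of horizontal edges ($1+\varepsilon$ if $e=0$, else~$0$), the number of ascending edges (always~$1$), or the number of descending edges ($\ell-\varepsilon$ if $e=1$, else~$\ell$), respectively. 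Substituting $\#\Aut(E')=2w(D)$, $2w(\ell^2D)$, $2w(D/\ell^2)$ respectively, the powers of~$2$ cancel, and the stated formula falls out once one observes that $w(\ell^2D)=1$ always (as $|\ell^2D|\ge12$), that $w(D)=1$ whenever $e\ge1$ (then $\ell\mid f$, so $|D|\ge12$), and that $w(D/\ell^2)=1$ whenever $e\ge2$ (then $\ell\mid f/\ell$, so $|D/\ell^2|\ge12$).

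The step I expect to be the main obstacle is marshalling the $\ell$-volcano facts in precisely this form: the standard references phrase them for ordinary elliptic curves over finite fields, whereas here they are needed over~$\C$, with the exact edge counts and --- what matters most for getting the factors $w(D)$ and $w(D/\ell^2)$ right --- with enough bookkeeping of automorphism groups to handle the exceptional discriminants $D_0\in\{-3,-4\}$. I would either cite a complex-analytic formulation directly or derive it from the description of cyclic $\ell$-isogenies of $\C/\a$ as index-$\ell$ overlattices of~$\a$, combined with the observation that such an overlattice has endomorphism ring of conductor in $\{f/\ell,f,\ell f\}$ and the classification of invertible ideals of norm~$\ell$ in an imaginary quadratic order. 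As a final consistency check I would confirm, using~\eqref{eq:hD}, that $a\,h(D)+b\,h(\ell^2D)+c\,h(D/\ell^2)=(\ell+1)h(D)$ in each of the cases $e=0$, $e=1$, $e\ge2$, which matches $\deg_X\Res_Y(H_D(Y),\Phi_\ell(X,Y))$.
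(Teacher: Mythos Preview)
Your proof is correct and follows essentially the same route as the paper: both arguments expand the resultant as $\prod_{[\a]}\Phi_\ell(X,j([\a]))$ and then invoke the $\ell$-isogeny volcano structure (citing Kohel and \cite{Sutherland:volcano}) to read off which discriminants $D'\in\{D,\ell^2D,D/\ell^2\}$ occur and with what multiplicities. The one presentational difference is that the paper fixes a root $j$ of $H_D$ and describes the $\ell+1$ roots of $\Phi_\ell(X,j)$ directly, whereas you fix a target $j'$ and compute its total multiplicity by the duality $N(A,B)=N(B,A)$ together with $\#\{C\subset A:A/C\cong B\}=N(A,B)/\#\Aut(B)$. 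Your version has the modest advantage that the factors $w(D)$, $w(\ell^2D)$, $w(D/\ell^2)$ emerge transparently from the automorphism bookkeeping, and you do not need a separate uniformity argument (Galois invariance) to know that every root of $H_{\ell^2D}$ or $H_{D/\ell^2}$ appears with the same multiplicity; the paper's version is shorter but leaves that point implicit. Your concern about citing volcano facts over $\C$ rather than over finite fields is reasonable but minor: the references already cover this, and your suggested lattice-theoretic derivation would work as well.
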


\begin{proof}
We exploit the theory of isogeny volcanoes; see
\cite{Sutherland:volcano} for a summary of the facts we need, most of
which were proved by David Kohel in his thesis~\cite{Kohel}.  If $j$
is a root of~$H_D$, then the $\ell+1$ roots of $\Phi_{\ell}(X,j)$ are
the $j'$ (counting multiplicities) such that there exist
$\ell$-isogenies from $j'$ to~$j$; these have
discriminant~$D'\in\{\ell^2D, D, D/\ell^2\}$.  Note that $j'$ is
Galois conjugate to~$j$ if and only if $D=D'$, which only occurs when $e=0$.

If $e=0$, then $j$ is on the rim of the $\ell$-isogeny volcano; in
this case $\varepsilon+1$ of the $\ell$-isogenous $j'$ have the same
discriminant~$D$ as~$j$, while the other $\ell+1 - (\varepsilon+1) =
\ell-\varepsilon$ occur with multiplicity~$w(D)$ and have discriminant~$\ell^{2}D$.
This establishes the first line of the result.

If $e\ge1$, there are no $\ell$-isogenous $j'$ with discriminant~$D$.
Those with discriminant $D/\ell^2$ occur with multiplicity
$(\ell-\varepsilon)/w(D/\ell^2)$ when $e=1$ (by the $e=0$ case
applied to~$j'$), and with multiplicity~$\ell$ when $e\ge2$.
Those with discriminant $\ell^2D$ occur with multiplicity~$w(D)$.
\end{proof}

\begin{corollary} \label{cor:CM-ell-count}
Let $j$ be a CM $j$-invariant of discriminant $D=D_0f^2$, let $K=\Q(\sqrt{D})$, and let $\ell$ be prime.
Then $m_{\ell}(j) \le 2$ and one of the following holds:
\begin{itemize}
\item $m_{\ell}(j) = 0$ and either $\ell\mid f$, or $\ell\nmid D$ is inert in~$K$;
\item $m_{\ell}(j) = 1$ and $\ell\nmid f$ is ramified in $K$;
\item $m_{\ell}(j) = 2$ and $\ell\nmid D$ splits in~$K$.
\end{itemize}
\end{corollary}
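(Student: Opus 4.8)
The plan is to read $m_\ell(j)$ directly off the factorization in Proposition~\ref{prop:LuminyFormula} via the resultant formula~\eqref{eqn:res-formula}. Write $H_D$ for the minimal polynomial of the CM $j$-invariant $j$ of discriminant $D = D_0 f^2$. By~\eqref{eqn:res-formula} we have $m_\ell(j) = \ord_{H_D}(\Res_Y(H_D(Y),\Phi_\ell(X,Y)))$, so the task reduces to computing the exact power of $H_D(X)$ dividing the right-hand side of the displayed identity in Proposition~\ref{prop:LuminyFormula}. Keep the notation $e = \ord_\ell(f) \ge 0$ and $\varepsilon = \left(\frac{D_0}{\ell}\right) \in \{0, \pm 1\}$ from that proposition.

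The key observation is that of the three kinds of factor appearing there — a power of $H_{\ell^2 D}(X)$, a power of $H_D(X)$ (present only when $e = 0$), and a power of $H_{D/\ell^2}(X)$ (present only when $e \ge 1$) — only the middle one is divisible by $H_D$. Indeed $\ell^2 D \ne D$, and when $e \ge 1$ the quantity $D/\ell^2$ is a genuine imaginary quadratic discriminant, still different from $D$; since HCPs are irreducible and the $j$-invariant of a CM elliptic curve determines the discriminant of its endomorphism ring, $H_{\ell^2 D}$ and $H_{D/\ell^2}$ share no root with $H_D$ and are therefore coprime to it. Hence $\ord_{H_D}$ of the resultant equals $1 + \varepsilon$ when $e = 0$ and equals $0$ when $e \ge 1$; as $\varepsilon \in \{0, \pm 1\}$ this already gives $m_\ell(j) \le 2$.

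It remains to translate the conditions on $(e, \varepsilon)$ into the splitting type of $\ell$ in $K = \Q(\sqrt{D_0})$. The case $e \ge 1$ is exactly $\ell \mid f$, and there $m_\ell(j) = 0$. When $e = 0$, i.e.\ $\ell \nmid f$, the Kronecker symbol $\varepsilon = \left(\frac{D_0}{\ell}\right)$ equals $0$, $+1$, or $-1$ according as $\ell$ is ramified (i.e.\ $\ell \mid D_0$), split, or inert in $K$, and in the split and inert cases $\ell \nmid D$ as well; correspondingly $m_\ell(j) = 1 + \varepsilon$ equals $1$, $2$, or $0$. Matching these against the statement — the $\ell \mid f$ case and the $e = 0$ inert case both fall under the first bullet, the $e = 0$ ramified case under the second, and the $e = 0$ split case under the third — and checking that the four cases are exhaustive and mutually exclusive, finishes the proof. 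There is no real obstacle here: the one point needing a word of justification is the coprimality of $H_D$ with $H_{\ell^2 D}$ and $H_{D/\ell^2}$, which rests on the fact that a CM $j$-invariant pins down its CM discriminant; the rest is bookkeeping with the definitions of $e$ and $\varepsilon$.
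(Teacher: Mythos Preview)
Your proof is correct and follows essentially the same approach as the paper's own proof: read off $m_\ell(j)$ as the exponent of $H_D$ in the factorization of Proposition~\ref{prop:LuminyFormula}, obtaining $0$ when $e\ge 1$ and $1+\varepsilon$ when $e=0$, then translate $\varepsilon$ into the splitting behaviour of $\ell$ in $K$. You spell out the coprimality of $H_D$ with $H_{\ell^2 D}$ and $H_{D/\ell^2}$ more explicitly than the paper does, but the argument is otherwise identical.
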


\begin{proof}
  In the notation of the Proposition, $m_{\ell}(j)$ is the exponent
  of~$H_D(X)$ in the formula.  If $\ell\mid f$ then $e\ge1$, so
  $m_{\ell}(j) = 0$, while if $\ell\nmid f$ then $e=0$ and
  $m_{\ell}(j) = 1+\varepsilon$.  Lastly, $1+\varepsilon=0$, $1$,
  or~$2$ according to whether $\ell$ is inert, ramified or split
  in~$K$ respectively.
\end{proof}

\begin{corollary}
If $j$ is a CM $j$-invariant, the densities of primes~$\ell$ for which $m_{\ell}(j)=0$, $1$, $2$ are $1/2$, $0$, $1/2$, respectively.
\end{corollary}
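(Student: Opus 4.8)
The plan is to deduce the statement directly from Corollary~\ref{cor:CM-ell-count}, which already partitions the set of all primes $\ell$ according to the value of $m_{\ell}(j)\in\{0,1,2\}$ in terms of how $\ell$ behaves relative to the conductor $f$ of $\OO_D$ and to the field $K=\Q(\sqrt{D})$. The first thing I would note is that this trichotomy is both exhaustive and mutually exclusive: every prime $\ell$ either divides $f$, or is inert, ramified, or split in $K$, and Corollary~\ref{cor:CM-ell-count} tells us that $m_{\ell}(j)=1$ exactly for the ramified $\ell\nmid f$, that $m_{\ell}(j)=2$ exactly for the split $\ell\nmid D$, and that $m_{\ell}(j)=0$ for all remaining primes (the $\ell\mid f$ together with the inert $\ell\nmid D$). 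Consequently the three sets $S_i=\{\ell : m_{\ell}(j)=i\}$ partition the primes, so their densities, once shown to exist, must sum to $1$.

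Next I would dispose of $S_1$: by the corollary, $m_{\ell}(j)=1$ forces $\ell$ to be ramified in $K$, and only the finitely many primes dividing the discriminant $D_K$ ramify in a quadratic field. Hence $S_1$ is finite and has density $0$. For $S_2$, the corollary identifies it with the set of primes $\ell\nmid D$ that split in $K$. The set of primes splitting in the quadratic field $K$ has density $\tfrac12$ — by the Chebotarev density theorem for $K/\Q$, or more elementarily because splitting of $\ell$ is governed by the value $\leg{D_K}{\ell}$ and the primes with a given value of this Kronecker character have density $\tfrac12$ by Dirichlet's theorem. Removing the finitely many $\ell\mid D$ does not affect the density, so $S_2$ has density $\tfrac12$.

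Finally, $S_0$ consists of the remaining primes, so its density is $1-0-\tfrac12=\tfrac12$; alternatively one sees this directly, since $S_0$ is the union of the finite set $\{\ell\mid f\}$ with the set of inert $\ell\nmid D$, the latter again having density $\tfrac12$ by the same character argument. This completes the plan. I do not expect any real obstacle: the entire content is packaged in Corollary~\ref{cor:CM-ell-count}, and the only point requiring a moment's care is verifying that its list of cases covers all primes without overlap, which is immediate from the four possible splitting behaviours of $\ell$ in $K$ together with the divisibility condition on $f$.
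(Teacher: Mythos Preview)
Your proof is correct and follows exactly the approach the paper intends: the corollary is stated in the paper without proof, as an immediate consequence of Corollary~\ref{cor:CM-ell-count}, and your argument spells out precisely this deduction via the density of split, inert, and ramified primes in the quadratic field~$K$.
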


We now determine the possible multiplicities~$m_{\ell}(j)$ in the case where $j$ is \emph{not} CM, and will see that it is usually~$0$, may be~$1$ for finitely many~$\ell$, but is never larger than 1.
Assume that $j\in\Qbar$ is non-CM.
The set $G_j$ of $\sigma\in\Gal(\Qbar/\Q)$ such that $\sigma(j)$ is isogenous to $j$ is a subgroup containing $\Gal(\Qbar/\Q(j))$; letting $F$ denote its fixed field, we see that $F$ is a subfield of~$\Q(j)$.
For most values of $j$ we will have $F=\Q(j)$, meaning that $j$ is not isogenous to any of its conjugates (except itself, trivially).
If $F=\Q$ then (by definition), $j$ is a $\Q$-number, in the sense that elliptic curves with $j$-invariant~$j$ are $\Q$-curves; in general $j$ is a $K$-number (with an analogous definition): see \cite{Elkies} and \cite{CremonaNajman}.
By the theory of $K$-curves, if there are any isogenies between $j$ and its conjugates of degree divisible by~$\ell$, they all factor through a unique $\ell$-isogeny, so that $m_{\ell}(j)=1$; otherwise $m_{\ell}(j)=0$.
This establishes the following proposition.

\begin{proposition}\label{prop:non-CM-mellj}
Let $j$ be an algebraic number that is not CM.
Then $m_{\ell}(j)=0$ for all but finitely many (and possibly no) primes~$\ell$, for which $m_{\ell}(j)=1$.
\end{proposition}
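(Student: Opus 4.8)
The plan is to make rigorous the dichotomy sketched in the paragraph preceding the statement, using the structure theory of $K$-curves. First I would set up the relevant group: let $j$ be a non-CM algebraic number and let $G_j\subseteq\Gal(\Qbar/\Q)$ be the set of $\sigma$ for which $\sigma(j)$ is isogenous (over $\Qbar$) to $j$. Since ``isogenous'' is an equivalence relation and is $\Gal(\Qbar/\Q)$-equivariant, $G_j$ is a subgroup containing $\Gal(\Qbar/\Q(j))$, so its fixed field $F$ is a subfield of $\Q(j)$, and $F/\Q$ is finite. By definition an elliptic curve $E$ with $j(E)=j$ is then a $K$-curve for $K=F$ (a $\Q$-curve when $F=\Q$): all conjugates of $E$ over $F$ are isogenous to $E$. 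This is exactly the setting of the work of Elkies and of Cremona--Najman cited in the excerpt.

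Next I would invoke the key structural fact about $K$-curves without CM: for a non-CM $K$-curve $E$, any two isogenies $E\to\sigma(E)$ and $E\to\tau(E)$ to conjugates have degrees whose ``new part'' at each prime $\ell$ is controlled, and in particular all isogenies among the $\Gal(\Qbar/F)$-conjugates of $E$ whose degree is divisible by $\ell$ factor through a \emph{single} cyclic $\ell$-isogeny $\phi_\ell\colon E\to E'$ (up to the obvious equivalence). The reason, as indicated in the text, is the ``no cycle'' principle: if two distinct cyclic $\ell$-isogenies from $E$ both landed on conjugates of $E$, composing one with the dual of another (after transporting by the appropriate Galois element, which exists because we are inside the conjugacy class) would produce a nontrivial endomorphism of $E$ of degree $\ell^2$ with non-cyclic kernel, forcing $\End(E)$ to be strictly larger than $\Z$ and contradicting the non-CM hypothesis. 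Hence among the roots of $\Phi_\ell(X,j)$ there is at most one that is a Galois conjugate of $j$, counted with multiplicity: multiplicity $>1$ would again yield two ``directions'' in $E[\ell]$ mapping to conjugates, i.e. a non-cyclic self-isogeny, again contradicting non-CM. Therefore $m_\ell(j)\le 1$ for every prime $\ell$, which already gives one half of the statement via the resultant formula \eqref{eqn:res-formula}.

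It then remains to show $m_\ell(j)=0$ for all but finitely many $\ell$. Here I would argue that each prime $\ell$ with $m_\ell(j)=1$ contributes an honest cyclic $\ell$-isogeny between $E$ and one of its finitely many $F$-conjugates, and a non-CM elliptic curve over a number field admits only finitely many cyclic isogenies of prime degree up to isomorphism of source and target — this follows from the uniformity results on isogenies of elliptic curves over number fields (for $\Q$ this is Mazur's theorem; in general it follows from the Momose/Merel-type bounds, or more directly from the fact that a non-CM $E/F$ has $\ell$-adic image eventually all of $\GL_2(\Z_\ell)$ for $\ell$ large, ruling out a rational cyclic $\ell$-isogeny). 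Since there are only finitely many conjugates $\sigma(j)$ to consider, and for each the set of primes $\ell$ for which $E$ is $\ell$-isogenous to $\sigma(E)$ is finite, the union over all $\sigma$ is finite; outside this finite set $m_\ell(j)=0$. I would also remark that the set may be empty, which is the generic situation $F=\Q(j)$.

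The main obstacle I anticipate is pinning down precisely the statement ``all $\ell$-power isogenies among conjugates factor through a single $\ell$-isogeny'' in a self-contained way: this is really the engine of the argument, and while it is standard in the $K$-curve literature, a careful treatment needs the compatibility between the isogeny-character/cocycle attached to a $K$-curve and the cyclic structure of kernels, together with the non-CM hypothesis to exclude the degenerate cases. In the paper's context, however, this is legitimately citable to \cite{Elkies} and \cite{CremonaNajman}, so the proof can remain short: establish the $K$-curve setup, quote the structural input, deduce $m_\ell(j)\le1$ from the ``no non-cyclic self-isogeny'' principle, and finish with a finiteness-of-prime-degree-isogenies argument for a non-CM curve.
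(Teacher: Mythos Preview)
Your approach is the same as the paper's: set up the $K$-curve framework via $G_j$ and its fixed field $F$, then use the structure of isogenies among conjugates of a non-CM curve to bound $m_\ell(j)$. Your argument for $m_\ell(j)\le 1$ is a correct elaboration of what the paper cites to the $K$-curve references \cite{Elkies,CremonaNajman}.

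However, your finiteness argument has a gap. You claim that ``a non-CM elliptic curve over a number field admits only finitely many cyclic isogenies of prime degree up to isomorphism of source and target'' and justify this via Mazur/Momose/Merel or Serre's open image theorem. Those results concern isogenies (or torsion) \emph{defined over a fixed number field}, whereas the isogenies $E\to\sigma(E)$ you need are a priori only defined over~$\Qbar$, and over~$\Qbar$ every $E$ admits $\ell+1$ cyclic $\ell$-isogenies for every prime~$\ell$, with pairwise non-isomorphic targets when $E$ is non-CM. So the cited results do not apply as stated. The correct, and much more elementary, reason your key claim (``for each $\sigma$, the set of primes $\ell$ with $E$ $\ell$-isogenous to $\sigma(E)$ is finite'') holds is simply that $\operatorname{Hom}_{\Qbar}(E,\sigma(E))$ is free of rank~$1$ over~$\Z$ when $E$ is non-CM: if $\phi_0$ is a generator of minimal degree~$d_\sigma$, every isogeny $E\to\sigma(E)$ has degree $n^2d_\sigma$ for some $n\in\Z$, so an $\ell$-isogeny exists only if $d_\sigma=\ell$. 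Finitely many conjugates then yield finitely many such primes. This is what is implicit in the paper's one-line appeal to the theory of $K$-curves, and it requires no uniformity input of Mazur/Merel type.
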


\begin{corollary}
If $j$ is non-CM then the densities of primes~$\ell$ for which $m_{\ell}(j)=0$, $1$ are $1$, $0$, respectively.
\end{corollary}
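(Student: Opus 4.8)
The plan is to obtain this corollary as an immediate consequence of Proposition~\ref{prop:non-CM-mellj}, together with the elementary fact that a finite set of primes has density zero. First I would recall that Proposition~\ref{prop:non-CM-mellj} has already done the substantive work: for non-CM $j$ it shows that $m_\ell(j)\in\{0,1\}$ for \emph{every} prime $\ell$ (the value $m_\ell(j)\ge 2$ is ruled out, since by the theory of $K$-curves any isogenies between $j$ and its conjugates of degree divisible by $\ell$ all factor through a single $\ell$-isogeny), and moreover that the set
\[
S\coloneqq\{\ell\ \text{prime}:m_\ell(j)=1\}
\]
is finite. So the two classes $\{\ell:m_\ell(j)=0\}$ and $\{\ell:m_\ell(j)=1\}=S$ partition the set of all primes, with the second class finite.

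Next I would spell out the one-line density computation, matching the notion of density (natural density over the rational primes $\ell$) used in the preceding corollary for CM $j$-invariants. Writing $\pi(x)$ for the number of primes $\ell\le x$, the number of primes $\ell\le x$ with $m_\ell(j)=1$ is $\#\{\ell\in S:\ell\le x\}\le \#S=O(1)$, while the number with $m_\ell(j)=0$ is $\pi(x)-\#\{\ell\in S:\ell\le x\}=\pi(x)-O(1)\sim\pi(x)$. Dividing by $\pi(x)$ and letting $x\to\infty$ gives density $1$ for the class $m_\ell(j)=0$ and density $0$ for the class $m_\ell(j)=1$, as claimed. In particular the result is unconditional, inheriting this from Proposition~\ref{prop:non-CM-mellj}.

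There is essentially no obstacle here: all of the arithmetic content (the bound $m_\ell(j)\le 1$ and the finiteness of the exceptional set) is already packaged in Proposition~\ref{prop:non-CM-mellj}, and what remains is the trivial observation that a finite subset of the primes is negligible. Accordingly I would keep the written proof to a single sentence, simply citing Proposition~\ref{prop:non-CM-mellj} and noting that a finite set of primes has density~$0$.
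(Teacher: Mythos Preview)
Your proposal is correct and matches the paper's approach: the corollary is stated without proof in the paper, as it follows immediately from Proposition~\ref{prop:non-CM-mellj} together with the fact that a finite set of primes has density~$0$. Your single-sentence proof citing that proposition is exactly what is intended.
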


\subsubsection*{Application to CM testing}

Given an algebraic integer~$j$ with minimal polynomial~$H\in\Z[X]$, use 
equation~\eqref{eqn:res-formula} to compute $m_{\ell}(j)$ for a suitably chosen prime~$\ell$.
If $m_{\ell}(j)=2$ then $j$ is CM.
Moreover, if the reduction of $H$ modulo~$\ell$ is squarefree in $\Fl[X]$ with roots in $\F_{\ell^2}$ which are not supersingular $j$-invariants, then $m_{\ell}(j)=2$ if and only if $j$ is CM.
This observation is the basis of our first algorithm.

\subsection{Explicit bounds}\label{subsec:bounds}

In this section we recall some number-theoretic bounds needed for the analysis of our algorithms.

\begin{proposition}\label{prop:ordprimes}
Let $E$ be an elliptic curve defined over a number field $F$ of degree $n$.
Let~$S$ be the set of rational primes $p$ for which $F$ has a degree one prime $\p|p$ and every $\p|p$ is a prime of good ordinary reduction for $E$.
The set $S$ has density at least $1/(2n)$.
\end{proposition}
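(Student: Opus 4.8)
The plan is to treat separately the cases in which $E$ does and does not have complex multiplication; in either case we may discard a finite set of rational primes—those of bad reduction, those dividing $6$, and, in the CM case, those ramifying in the compositum $FK$ introduced below—without affecting the density.

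\emph{The CM case.} Put $K=\mathrm{Frac}(\End(E))$. By the Deuring reduction theorem recalled at the end of \S\ref{subsec:basicCMfacts} (see \cite[Thm.~13.12]{Lang}), for all but finitely many $p$ the curve $E$ has ordinary reduction at every prime $\p\mid p$ of $F$ if and only if $p$ splits in $K$; note that this last condition depends only on $p$. The key point I would use is the following equivalence, valid for every $p$ unramified in $FK$: the field $FK$ has a degree-one prime above $p$ if and only if $F$ has a degree-one prime above $p$ and $p$ splits in $K$. For the reverse implication one restricts a degree-one prime of $FK$ to $F$ and to $K$; for the forward one, a degree-one prime of $F$ above $p$ and a degree-one prime of $K$ above $p$ have completions equal to $\Q_p$, whose compositum inside a suitable completion of $FK$ is again $\Q_p$, yielding a degree-one prime of $FK$. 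Hence, up to a finite discrepancy, $S$ is exactly the set of rational primes lying below a degree-one prime of $FK$. Since there are only $O(\sqrt x)$ primes of $FK$ of degree at least two and norm at most $x$, Landau's prime ideal theorem produces $(1+o(1))\,x/\log x$ degree-one primes of $FK$ of norm at most $x$; each such prime has norm equal to the rational prime it lies over, and at most $[FK:\Q]$ of them lie over any given $p$, so $S$ has density at least $1/[FK:\Q]\ge 1/(2n)$, as $[FK:F]\le 2$.

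\emph{The non-CM case.} Applying Landau's theorem instead to the degree-one primes of $F$ shows that the set of rational primes with a degree-one prime of $F$ above them has density at least $1/[F:\Q]=1/n$, so it suffices to prove that the set of $p$ admitting some prime $\p\mid p$ of $F$ of supersingular reduction has density zero. I would deduce this from Serre's theorem that, for a non-CM elliptic curve, the supersingular primes have density zero among the primes of $F$. To pass to the rational primes below them, note that if $\p\mid p$ is supersingular then $j(E)\bmod\p$ is a supersingular $j$-invariant, hence lies in $\mathbb F_{p^2}$; since $\Q(j(E))\subseteq F$, the prime $\p\cap\Q(j(E))$ has residue degree $1$ or $2$ over $p$ and is supersingular for the curve $E_{j(E)}$ over $\Q(j(E))$ of \S\ref{subsec:GalRep}. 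For residue-degree-one primes, which have norm equal to the rational prime below them, Serre's density-zero statement transfers directly; for residue-degree-two primes a brief further argument—controlling the mod-$\ell$ Frobenius for a large auxiliary prime $\ell$, as in the Proposition of \S\ref{subsec:GalRep}, so as to confine it to a subset of $\GL_2(\Fl)$ of density $O(1/\ell)$—gives the same conclusion. Hence $S$ has density at least $1/n\ge 1/(2n)$.

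I expect the crux to be the equivalence used in the CM case: it converts the awkward conjunction ``$F$ has a degree-one prime above $p$ and $p$ splits in $K$'' into a single splitting condition in $FK$, to which the prime ideal theorem applies directly, and it is responsible for the factor $2$ in the bound—indeed $1/(2n)$ is sharp, being attained when, say, $F$ is real quadratic and $\End(E)$ is an order in an imaginary quadratic field not contained in $F$, so that $S$ is precisely the set of primes splitting completely in the quartic field $FK$. The only other step requiring real care is the transfer, in the non-CM case, of Serre's density-zero statement from the primes of $F$ to the rational primes beneath them, where the sole genuine issue is the primes of residue degree two.
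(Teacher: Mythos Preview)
Your approach differs from the paper's. The paper works in the Galois closure $L$ of $F$: Burnside's lemma shows that at least $|G|/n$ elements of $G=\Gal(L/\Q)$ have a fixed point, so Chebotarev gives $d(S')\ge 1/n$ for the set $S'$ of rational primes with a degree-one prime of $F$ above them; in the CM case the paper then intersects with the density-$\tfrac12$ set of primes split in $K$ and asserts $d(S)\ge d(S')/2$. You instead apply Landau's prime ideal theorem directly---to $FK$ in the CM case and to $F$ otherwise. Your CM argument is genuinely cleaner: the equivalence you prove collapses the two conditions into a single splitting condition in the one field $FK$, with no need to track how $K$ sits relative to the Galois closure of $F$ (and indeed the paper's intermediate inequality $d(S)\ge d(S')/2$ is not literally true when $F=\Q(j(E))$ and $D$ is non-abelian---only the identity of $A$ fixes a root---though the final bound $1/(2n)$ is correct). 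In the non-CM case the two arguments run parallel: both need that the rational primes with \emph{some} supersingular prime of $F$ above them have density zero; the paper simply cites \cite{Charles} for degree-one primes, while you go further, reducing to primes of $\Q(j(E))$ of residue degree at most $2$. Your degree-two sketch is the weak point, however: confining $\Frob_{\p'}$ to a density-$O(1/\ell)$ subset of $\GL_2(\F_\ell)$ is a statement about primes of $\Q(j(E))$ ordered by norm, and since degree-two primes above $p\le x$ have norm $p^2\le x^2$, converting this to density zero for the underlying rational primes requires a genuine Chebotarev argument over~$\Q$ that is not as brief as you suggest. (Minor slip: your labels ``forward'' and ``reverse'' for the two directions of the key equivalence are interchanged.)
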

\begin{proof}
Let $L$ be the Galois closure of $F$ over $\Q$.
The Galois group $G\coloneqq \Gal(L/\Q)$ is a transitive permutation group of degree $n$.
Burnside's lemma implies that $G$ contains at least $\#G/n$ elements with a fixed point (the average number of fixed points is $1$ and no element has more than $n$).
The Chebotarev density theorem implies that the density $d(S')$ of the set of rational primes $S'$ for which $K$ has a degree one prime $\p|p$ is at least $1/n$, and all but finitely many of these primes are primes of good reduction for $E$.
As shown in \cite[Cor.~6.2]{Charles}, if $E$ does not have CM the set of supersingular degree one primes of $K$ has density 0 in the set of degree one primes of $K$, which implies $d(S)=d(S')\ge 1/n$.

If $E$ has CM by $\OD$ then it has ordinary reduction at all primes $\p$ of good reduction above a rational prime $p$ for which $\left(\frac{D}{p}\right)=+1$.
The set of such rational primes $p$ has density $1/2$, and if follows that $d(S)\ge d(S')/2=1/(2n)$.
\end{proof}

To simplify notation we shall use ``$\llog$" as an abbreviation for ``$\log\log$".

\begin{proposition}[GRH]\label{prop:Dbound}
Assume GRH, let $D < 0$ be a discriminant and let $h=h(D)$.
If $D$ is fundamental then
\[
|D| \ \le\ \left(\frac{12e^{\gamma}}{\pi}\right)^2 h^2\left(\llog(h+1)+4\right)^2,
\]
and for any $D$ we have
\[
|D| \ \le\ \left(\frac{12e^{2\gamma}}{\pi}\right)^2 h^2\left(\llog(h+1)+4\right)^4,
\]
where $\gamma=0.57721566\ldots$ is Euler's constant.
\end{proposition}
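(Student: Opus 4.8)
The plan is to bound $|D|$ below in terms of the class number via the analytic class number formula, then use an effective upper bound on $L(1,\chi_D)$ under GRH. Write $D = f^2 D_0$ with $D_0$ fundamental and let $\chi_{D_0}$ be the associated primitive quadratic character of conductor $|D_0|$. For a fundamental discriminant, Dirichlet's formula gives $h(D_0) = \frac{w(D_0)\sqrt{|D_0|}}{2\pi} L(1,\chi_{D_0})$, so that $\sqrt{|D_0|} = \frac{2\pi h(D_0)}{w(D_0) L(1,\chi_{D_0})}$. Since $w(D_0)\ge 2$ in all cases we exclude (and $=2$ once $|D_0|>4$), the task reduces to producing a good \emph{lower} bound on $L(1,\chi_{D_0})$. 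Under GRH, the standard effective estimate — due to Littlewood, made explicit e.g. in work of Lamzouri--Li--Soundararajan — is
\[
L(1,\chi_{D_0}) \ \ge\ \frac{c}{\llog |D_0|}
\]
for an explicit constant involving $e^{\gamma}$; more precisely one has $L(1,\chi_{D_0})^{-1}\le \frac{12 e^{\gamma}}{\pi^2}\bigl(\llog|D_0| + O(1)\bigr)$ with a completely explicit $O(1)$ term. Combining this with the class number formula yields $\sqrt{|D_0|}\le \frac{2\pi}{w(D_0)}\cdot h(D_0)\cdot\frac{12e^{\gamma}}{\pi^2}(\llog|D_0|+O(1))$, and squaring gives the fundamental case once the constants are arranged into the stated form $\bigl(\tfrac{12e^{\gamma}}{\pi}\bigr)^2 h^2(\llog(h+1)+4)^2$. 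The replacement of $\llog|D_0|$ by $\llog(h+1)$ is legitimate because $|D_0|$ is itself polynomially bounded in $h$ (a crude first-pass bound suffices to feed back in), and the constant $4$ absorbs the $O(1)$ for all $|D_0|$ above a small threshold, which one checks directly for the finitely many small discriminants.

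For the general (non-fundamental) case, I would use the conductor-growth formula~\eqref{eq:hD}: $h(D) = \frac{h(D_0) f}{[\OK^\times:\OD^\times]}\prod_{p\mid f}\bigl(1-\bigl(\tfrac{D_0}{p}\bigr)\tfrac1p\bigr)$. The Euler-type product over $p\mid f$ is bounded below by $\prod_{p\mid f}(1-1/p)$, which by Mertens' theorem (again effective, with an $e^{\gamma}$ and a $\llog$) is at least $\frac{c'}{\llog f}$ for an explicit $c'$ involving $e^{-\gamma}$. Hence $f \le h(D)\cdot\frac{[\OK^\times:\OD^\times]}{h(D_0)}\cdot\frac{\llog f + O(1)}{c'}$, and since $[\OK^\times:\OD^\times]\le 3$ and $h(D_0)\ge 1$, we get $f \le C\, h(D)(\llog(h+1)+O(1))$ after again bounding $\llog f$ by $\llog(h+1)$ up to constants using a crude preliminary estimate. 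Now $|D| = f^2|D_0| \le f^2 \cdot \bigl(\tfrac{12e^{\gamma}}{\pi}\bigr)^2 h(D_0)^2(\llog(h(D_0)+1)+4)^2$ and $h(D_0)\le h(D)$; multiplying the two bounds and collecting the $e^{\gamma}$ factors — one from the $L$-value bound applied to $D_0$, one from the Mertens bound for the $f$-part — produces the extra $e^{\gamma}$ (giving $e^{2\gamma}$) and the fourth power of the $\llog$ factor, matching the stated inequality.

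The main obstacle is entirely bookkeeping rather than conceptual: tracking the explicit constants through Littlewood's GRH bound and through the effective Mertens estimate, and verifying that the clean constant $4$ inside $\bigl(\llog(h+1)+4\bigr)$ genuinely dominates all the accumulated $O(1)$ terms — including the error in replacing $\llog|D_0|$ and $\llog f$ by $\llog(h+1)$ — uniformly over \emph{all} discriminants, which forces one to handle a finite initial segment of small $|D|$ by direct inspection (or by citing the known tables of class numbers). I would organize the write-up so that the fundamental case is stated as a lemma with the explicit $L(1,\chi)$ input quoted from the literature, then derive the general case as a short corollary using~\eqref{eq:hD} and Mertens, leaving only the verification of the threshold cases as a remark.
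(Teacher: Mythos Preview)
Your approach is the same as the paper's: the class number formula plus an explicit GRH lower bound on $L(1,\chi_{D_0})$ (the paper cites Languasco--Trudgian rather than Lamzouri--Li--Soundararajan, but the shape is identical), then Mertens/Rosser--Schoenfeld for $\prod_{p\mid f}(1-1/p)$, and finally a bootstrapping step to replace $\llog|D|$ by $\llog(h+1)$.  The fundamental case is fine.

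There is, however, a genuine slip in the non-fundamental case.  You correctly derive
\[
f \ \le\ h(D)\cdot\frac{[\OK^\times:\OD^\times]}{h(D_0)}\cdot\frac{\llog f+O(1)}{c'},
\]
but then discard the factor $h(D_0)$ in the denominator (``$h(D_0)\ge 1$'') to get $f\le C\,h(D)(\llog+O(1))$.  Combining this with $|D_0|\le C'\,h(D_0)^2(\llog+4)^2$ and $h(D_0)\le h(D)$ gives
\[
|D|=f^2|D_0|\ \le\ C''\,h(D)^2\cdot h(D_0)^2(\llog+4)^4\ \le\ C''\,h(D)^4(\llog+4)^4,
\]
which is $h^4$, not $h^2$.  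The fix is simply \emph{not} to drop $h(D_0)$: keep $f\le C\,\dfrac{h(D)}{h(D_0)}(\llog+O(1))$, so that in the product $f^2|D_0|$ the factor $h(D_0)^2$ cancels and you recover $|D|\le C''\,h(D)^2(\llog+4)^4$ as desired.  Equivalently (and this is how the paper organizes it), multiply the inequality $h(D_0)\ge c_1\sqrt{|D_0|}(\llog|D_0|+O(1))^{-1}$ by the inequality $h(D)/h(D_0)\ge c_2\,f(\llog f+O(1))^{-1}$ to get $h(D)\ge c_1c_2\sqrt{|D|}(\llog|D|+O(1))^{-2}$ directly, with $\sqrt{|D_0|}\cdot f=\sqrt{|D|}$ and the $h(D_0)$'s cancelling.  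The small-$|D_0|$ cases $D_0\in\{-3,-4\}$ (where the unit index is $>1$) also need a sentence, since the crude ``$[\OK^\times:\OD^\times]\le 3$'' you invoke would otherwise spoil the constant.
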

\begin{proof}
Under GRH we may apply \cite[Corollary 1]{LanguascoTrudgian} to obtain
\[
h \ge \frac{\pi}{12e^\gamma}\sqrt{|D|}\left(\llog|D|-\log 2 + \frac{1}{2}+\frac{1}{\llog |D|} + \frac{14\llog |D|}{\log |D|}\right)^{-1},
\]
for all fundamental discriminants $D<-4$.  For $|D|\ge 2^{28}$ we can simplify this to
\begin{equation}\label{eq:hbound}
h \ge \frac{\pi}{12e^\gamma}\sqrt{|D|}\left(\llog|D|+2.3\right)^{-1}.
\end{equation}
We have $\llog |D|+2.3\le |D|^{2/23}$ for $|D|\ge 2^{28}$, in which case $h\ge (\pi e^{-\gamma}/12)|D|^{19/46}$.
Taking logarithms yields $\log h\ge \frac{19}{46}\log |D| + \log (\pi e^{-\gamma}/12) \ge \frac{7}{23}\log |D|$,  and taking logarithms again yields $\llog|D|\le \llog h+1.2$, which implies
$\llog|D|+2.3\le \llog h + 3.5 \le \llog(h+1)+4$.
Thus $h \ge (\pi e^{-\gamma}/12)\sqrt{|D|}(\llog(h+1)+4)^{-1}$, which implies that the proposition holds for all fundamental discriminants with $|D|\ge 2^{28}$.

Using the data from~\cite{JacobsonMosunov} available at~\cite{LMFDB} in combination with \eqref{eq:hD}, one can verify that \eqref{eq:hbound} and the first bound of the proposition hold for fundamental $|D| < 2^{28}$, hence for all fundamental discriminants.

Now suppose $D=f^2D_0$ with $f\ge 2$ and $D_0 < -4$ fundamental, and let $h_0\coloneqq h(D_0)$.
Equation~\eqref{eq:hD} implies
\begin{equation}\label{eq:hratiobound}
\frac{h}{h_0} \ge \prod_{p|f}\left(1-\frac{1}{p}\right )f =\phi(f)\ge \frac{f}{e^\gamma(\llog f+2)},
\end{equation}
where (3.42) in \cite{RosserSchoenfeld} is used to obtain $\phi(n)\ge n/(e^\gamma(\llog(n)+2))$ for $n\ge 2$.
We now apply \eqref{eq:hbound} to $h_0$ and $D_0$, then multiply by \eqref{eq:hratiobound} to obtain
\begin{equation}\label{eq:hhbound}
h \ge \frac{\pi}{12e^{2\gamma}}\sqrt{|D|}(\llog|D|+2.3)^{-2},
\end{equation}
where we have use $\llog|D|+2.3$ as an upper bound on both $\llog|D_0|+2.3$ and $\llog f+2$.  Equation \ref{eq:hhbound} holds for all discriminants $D=f^2D_0$ with $f\ge 2$ and $D_0<-4$, but \eqref{eq:hbound} implies that it also holds for $f=1$, and for $D_0=-3,-4$ the RHS of \eqref{eq:hbound} is less than 1/3, which bounds the factor $1/[\OK^\times:\OD^\times]$ in \eqref{eq:hD} that we omitted from \eqref{eq:hratiobound} for $D_0<-4$, so in fact \eqref{eq:hhbound} holds for all discriminants.

The argument that \eqref{eq:hhbound} implies the second bound in the proposition proceeds as in the argument for \eqref{eq:hbound}, except now have $h\ge (\pi e^{-2\gamma}/12)|D|^{15/46}$ and $\log h \ge \frac{9}{46}\log|D|$ and $\llog|D|\le \llog h+1.7$, which allows us to replace $\log |D|+2.3$ with $\llog(h+1)+4$ in \eqref{eq:hhbound}, yielding the second bound in the proposition.
\end{proof}

\begin{proposition}[GRH]\label{prop:splitprimes}
Assume GRH and let $D$ be an imaginary quadratic discriminant.
Then there is a prime $\ell \le(1.075\log|D|+19)^2$ such that $\left(\frac{D}{\ell}\right)=+1$.
\end{proposition}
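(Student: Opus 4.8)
The plan is to recognize the statement as a GRH-conditional bound on the least prime at which a real Dirichlet character assumes the value $+1$, reduce to that formulation, and then invoke an explicit version of such a bound (or, if preferred, prove it from scratch). First I would record that $n\mapsto\leg{D}{n}$ is a real, non-principal Dirichlet character $\chi$ modulo $|D|$: for a prime $\ell$ we have $\leg{D}{\ell}=0$ exactly when $\ell\mid D$, while for $\ell\nmid D$, writing $D=f^2D_0$ with $D_0$ fundamental gives $\leg{D}{\ell}=\leg{D_0}{\ell}\in\{\pm1\}$, which equals $+1$ precisely when $\ell$ splits in $K=\Q(\sqrt{D})$. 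Thus $\chi$ is induced by the primitive quadratic character $\chi_{D_0}$ of conductor $|D_0|\le|D|$, so $L(s,\chi)$ has the same non-trivial zeros as $L(s,\chi_{D_0})$ and GRH for $L(s,\chi)$ follows from the GRH. The proposition is therefore equivalent to the assertion that, assuming GRH, a non-principal real character mod $q$ takes the value $+1$ at some prime $\ell\le(1.075\log q+19)^2$, applied with $q=|D|$.

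I would deduce this from an explicit GRH-conditional least-prime estimate of the kind first established by Bach and since sharpened. For a self-contained argument the route is Bach's smoothed explicit formula: fix a smooth, non-negative weight $w$ with compactly supported Mellin transform, and consider $S(x)=\sum_n\Lambda(n)\bigl(1+\chi(n)\bigr)\,w(n/x)=\psi_w(x)+\psi_w(x,\chi)$. Applying the explicit formula to each summand and using GRH---so that all non-trivial zeros of $\zeta(s)$ and of $L(s,\chi)$ lie on $\Re s=\tfrac12$, and the number of them up to a bounded height is $O(\log(qx))$---yields a lower bound $S(x)\ge\kappa x-E\sqrt{x}$ with $\kappa=\int_0^\infty w(t)\,dt>0$ and $E=O(\log(qx))$, both explicit. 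On the other hand, $1+\chi(n)\ge0$ for every $n$ (this is where $\chi$ being real enters), so if no prime $\ell\le x$ had $\chi(\ell)=+1$, then the only contributions to $S(x)$ would come from prime powers $p^k$ with $p\mid D$ (at most $O(\log|D|)$ choices of $p$, contributing $O(\log|D|\log x)$) or with $k\ge2$ (contributing $O(\sqrt x)$); hence $S(x)=O(\sqrt x+\log|D|\log x)$. Comparing the two estimates at $x=(1.075\log|D|+19)^2$ produces a contradiction, establishing the existence of a split prime below that bound.

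The crux, and the only real obstacle, is quantitative: to reach the clean coefficient $1.075$ (an improvement over Bach's original $2$) together with the small additive term $19$, one must optimize the choice of $w$---ideally an extremal band-limited minorant/majorant of the indicator of an interval---and carefully track every constant in the effective zero-counting bounds for $\zeta$ and $L(s,\chi)$ up to bounded height; the additive $19$ then absorbs the range of small $|D|$ where the $(\log|D|)^2$ asymptotics have not yet taken hold. Replacing the smooth weight $w$ by a sharp cutoff would cost an extra factor of $\log x$ from the truncation error in Perron's formula and would give only $x=O\bigl((\log|D|)^{2+o(1)}\bigr)$, so the smoothing---or, equivalently, a direct appeal to the sharpened explicit bound in the literature---is essential.
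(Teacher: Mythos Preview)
Your argument is correct in outline, but the paper's proof is a one-line citation rather than an explicit-formula computation: it applies the main theorem of Greni\'e--Molteni (\emph{Conditional upper bound for the $k$th prime ideal with given Artin symbol}, J.\ Number Theory \textbf{213} (2020)) with $k=0$ and $|G|/|C|=2$, i.e.\ it treats the problem as finding the least unramified prime whose Artin symbol in $\Gal(\Q(\sqrt D)/\Q)$ is trivial, and reads off the constant $1.075$ and additive term $19$ directly from that reference.

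Your route is mathematically equivalent for quadratic fields---the split condition $\Frob_\ell=\mathrm{id}$ and the character condition $\chi_D(\ell)=+1$ coincide---but you have chosen to redo the underlying Bach-style smoothed-explicit-formula argument and the weight optimization yourself. That is a genuine alternative and would work, but as you yourself note, extracting the specific constants $(1.075,19)$ from it requires exactly the careful optimization carried out in the cited literature; the paper simply outsources that step. So there is no gap in your proposal, just a difference in packaging: you sketch the proof of the black box, while the paper invokes it.
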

\begin{proof}
Apply the main theorem of \cite{GrenieMolteni} with $k=0$ and $|G|/|C|=2$.
\end{proof}

We denote by $\H$ the \defi{logarithmic height} of a nonzero polynomial~$H\in\Z[X]$; that is, the logarithm of the maximum absolute value of the coefficients of $H$.
\begin{corollary}[GRH]\label{cor:toobig}
Assume GRH and let $H\in \Z[X]$ be an HCP of degree~$h$.
Then
\[
\H \le 235\,h\log(h+1)^2(\llog(h+1)+4)^2,
\]
and there exists a prime
\[
\ell\le \ell(h)\coloneqq (2.15\log h + 4\log(\llog(h+1)+4)+24)^2
\]
such that the reduction of $H$ is squarefree in $\Fl[X]$ and its roots in $\overline{\F}_\ell$ are not supersingular.
\end{corollary}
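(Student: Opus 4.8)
The plan is to obtain both assertions by feeding the conditional bounds on $|D|$ from Proposition~\ref{prop:Dbound} into, respectively, a known explicit upper bound for the height of a Hilbert class polynomial and the conditional bound of Proposition~\ref{prop:splitprimes} on the least split prime of $K=\Q(\sqrt D)$.

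For the height bound I would begin from the classical analytic estimate $\H=\log\|H_D\|_\infty\le \pi\sqrt{|D|}\sum_{(a,b,c)}\frac1a+c_0 h$, where the sum runs over the leading coefficients $a$ of the $h=h(D)$ reduced primitive forms of discriminant $D$; this comes from combining the standard bound $|j(\tau)|\le e^{2\pi\operatorname{Im}\tau}+c_1$ at each CM point $\tau$ (which has $\operatorname{Im}\tau=\sqrt{|D|}/(2a)$) with the trivial bound on the coefficients of a monic product of $h$ linear factors; an explicit version is in the literature (see \cite{Sutherland:HCP} and the references there). Bounding the number of reduced forms of leading coefficient $a$ by a power-of-two divisor count and invoking an explicit form of $\sum_{a\le x}2^{\omega(a)}/a\ll(\log x)^2$ gives $\sum 1/a\le c_2(\log|D|)^2$, hence $\H\le c_3\sqrt{|D|}(\log|D|)^2+c_0 h$ with explicit constants. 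I would then substitute the second inequality of Proposition~\ref{prop:Dbound}, $\sqrt{|D|}\le\frac{12e^{2\gamma}}{\pi}h(\llog(h+1)+4)^2$, together with the resulting crude estimate $\log|D|\le 2\log h+4\log(\llog(h+1)+4)+O(1)$, and collect terms: the factor $\sqrt{|D|}$ supplies the $h$ and the $(\llog(h+1)+4)^2$, the squared logarithm supplies essentially $\log(h+1)^2$, and the linear term $c_0 h$ is absorbed, leaving the claimed bound once the overall numerical constant is verified to be at most $235$.

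For the least prime, the key point is that a prime $\ell$ with $\leg D\ell=+1$ is split in $K$ and coprime to the conductor $f$, so that by Proposition~\ref{prop:squarefree} the reduction $\overline H=\overline{H_D}$ is squarefree in $\Fl[X]$; moreover each root of $\overline H$ in $\Flbar$ equals the reduction $\overline{j([\a])}$ of a singular modulus, equivalently the $j$-invariant of the reduction at a prime above $\ell$ of an elliptic curve with CM by the order $\OD\subseteq K$; since $\ell$ splits in $K$ this reduction is ordinary (see the end of \S\ref{subsec:basicCMfacts}), so $\overline{j([\a])}$ is not a root of $\ss_\ell$. To bound the least such $\ell$ sharply I would use that $\leg D\ell=\leg{D_0}\ell$ whenever $\ell\nmid f$, so that splitting in $K$ depends only on the fundamental discriminant $D_0$: applying Proposition~\ref{prop:splitprimes} with $D_0$ in place of $D$, and bounding $|D_0|$ by the first (fundamental) inequality of Proposition~\ref{prop:Dbound} together with the divisibility $h(D_0)\mid h$ to get $\log|D_0|\le 2\log h+2\log(\llog(h+1)+4)+O(1)$, yields a bound on the least prime split in $K$ that sits comfortably below $\ell(h)$ — with margin enough to also discard the $O(\log|D|)$ primes dividing $f$, so that in fact $\leg D\ell=+1$. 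Substituting and simplifying gives $\ell\le\ell(h)$.

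The conceptual content is light; the main obstacle is the explicit constant bookkeeping. For the height bound one must pin down the constants in the explicit form of $\log\|H_D\|_\infty\le\pi\sqrt{|D|}\sum 1/a+O(h)$ and in the elementary estimate for $\sum_{a\le x}2^{\omega(a)}/a$, and then check that after substituting (the rather lossy) Proposition~\ref{prop:Dbound} the numerical constant indeed comes out at most $235$; should the crude chain not be tight enough, one falls back on the sharper explicit height bounds recorded in \cite{Sutherland:HCP}. For the prime, the remaining work is to confirm that the $D_0$-route estimate, plus the allowance for conductor primes, stays below $(2.15\log h+4\log(\llog(h+1)+4)+24)^2$; since Proposition~\ref{prop:splitprimes} leaves a wide margin here, this is routine.
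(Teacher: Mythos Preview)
Your height-bound argument follows the same path as the paper: an explicit bound of the shape $\H\le c\sqrt{|D|}(\log|D|)^2+c'h$ combined with Proposition~\ref{prop:Dbound}. The paper simply quotes Enge's explicit inequality from \cite[Theorem~1.2]{Enge}, namely $\H\le 3.012\,h+3.432\sqrt{|D|}\log(\sqrt{|D|})^2$ for $|D|\ge 10^6$, rather than re-deriving it from the $q$-expansion and form counting as you sketch; it then substitutes the second bound of Proposition~\ref{prop:Dbound}, checks $235>3.432\cdot 67.5+3.012$, and handles $|D|<10^6$ by direct computation. Your outline is correct in spirit, and your proposed fallback to the literature is exactly what the paper does.

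For the prime, your detour through the fundamental discriminant $D_0$ introduces a gap. Proposition~\ref{prop:splitprimes} bounds only the \emph{least} prime with $\leg{D_0}{\ell}=+1$; it gives no second such prime below any comparable threshold, so you cannot simply ``discard the $O(\log|D|)$ primes dividing $f$'' and still claim one remains below $\ell(h)$. (The underlying theorem of \cite{GrenieMolteni} does bound the $k$th such prime, and invoking it with $k>\omega(f)$ would repair the argument, but that is not what Proposition~\ref{prop:splitprimes} says and not what you wrote.) The paper sidesteps this entirely by applying Proposition~\ref{prop:splitprimes} to $D$ itself: a prime with $\leg{D}{\ell}=+1$ is automatically coprime to $f$, and feeding the \emph{second} (non-fundamental) inequality of Proposition~\ref{prop:Dbound} into $(1.075\log|D|+19)^2$ produces $\ell(h)$ directly, after which Proposition~\ref{prop:squarefree} and the ordinary-reduction fact from \S\ref{subsec:basicCMfacts} finish the job exactly as you describe. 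This route is both simpler and gap-free; your $D_0$ variant, once patched via the $k$th-prime result, would yield a slightly smaller $\ell(h)$ at the cost of a more delicate argument.
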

\begin{proof}
The first bound is obtained by combining Proposition~\ref{prop:Dbound} with the bounds on $\H$ given in \cite[Theorem 1.2]{Enge} as follows.  Let us first suppose that $|D| \ge 10^6$, in which case it follows from \cite{Klaise,Watkins} that $h\coloneqq h(D) \ge 76$.  It then follows from \cite[Theorem 1.2]{Enge} that
\[
|H_D| \le 3.012\,h + 3.432\sqrt{|D|}\log\Bigl(\!\sqrt{|D|}\Bigr)^2 \qquad\qquad\ \ (\text{for }|D|\ge 10^6),
\]
and Proposition~\ref{prop:Dbound} implies that
\[
\sqrt{|D|}\log\Bigl(\!\sqrt{|D|}\Bigr)^2 \le 67.5\, h\log(h+1)^2(\llog(h+1)+4)^2\qquad(\text{for }|D|\ge 10^6),
\]
where we have used $2.36\log(h+1)$ as an upper bound on $\log(\sqrt{|D|})\le \log (12e^{2\gamma}/\pi)+\log h+2\log(\llog(h+1)+4)$ for $h\ge 76$, and $67.5 > 2.36^2\cdot 12 e^{2\gamma}/\pi$.
Together these imply that the first bound in the corollary holds for $|D|\ge 10^6$, since $235>3.432\cdot 67.5+3.012$, and a direct computation shows that it also holds for $|D|<10^6$.

For the second bound in the corollary, it follows from Propositions~\ref{prop:Dbound} and ~\ref{prop:splitprimes} that if $H=H_D$ then there is a prime $\ell\le\ell(h)$ such that $\left(\frac{D}{\ell}\right)=+1$.
Proposition ~\ref{prop:squarefree} implies that the reduction of $H$ modulo~$\ell$ is squarefree, and $\ell$ splits in $\OD$, so the roots of $H$ are $j$-invariants of elliptic curves with ordinary reduction at $\ell$.
\end{proof}

\subsubsection*{Application to CM testing}
Assuming GRH, Corollary \ref{cor:toobig} provides either a small prime $\ell=O(\log^2 h)$ to which we can apply Propositions~\ref{cor:CM-ell-count} and~\ref{prop:non-CM-mellj} and determine if~$H$ is an HCP or not, or proves that $H$ is not an HCP because no such $\ell\le \ell(h)$ exists.
The bound on $\H$ allows us to simplify the complexity analysis of our algorithms.

\section{Existing Algorithms}\label{sec:oldalgs}

Before presenting our new algorithms, we describe the algorithms previously implemented in \Magma{} and \Sage{} to solve Problems~1 and~2 and review prior work by Charles \cite{Charles}, Achter \cite{Achter}, and Armitage \cite{Armitage}.
To make it easier to compare algorithms, we describe them all in the setting we use here, in which the input is a monic irreducible polynomial $H\in \Z[X]$ and the output is a boolean \texttt{true} or \texttt{false} giving the answer to Problem 1, and in the latter case the discriminant $D$ for which $H=H_D$ if the algorithm also solves Problem~2 (Achter's algorithm gives the fundamental part of the discriminant $D$).
As above, we use $F\coloneqq \Q[X]/(H(X))$, let $j\in F$ denote a root of $H$, and let $E_j$ denote an elliptic curve over $F$ with $j(E_j)=j$.

In order to easily compare complexity bounds we state them in terms of the degree $h$ and the logarithmic height~$\H$. When $H=H_D$ we have
\[
h(D)=O(|D|^{1/2}\log|D|)\qquad\text{and}\qquad \HD=O(|D|^{1/2}(\log|D|)^2),
\]
which under the GRH can be improved to
\[
h(D)=O(|D|^{1/2}\llog |D|)\qquad\text{and}\qquad \HD=O(|D|^{1/2}\log|D|\llog|D|).
\]
We refer the reader to Section 7 and the Appendix of \cite{Sutherland:HCP} for these and other bounds related to $H_D$.
Both bounds are captured by the less precise notation $h(D)=|D|^{1/2+o(1)}$ and $\HD=|D|^{1/2+o(1)}$, ignoring logarithmic factors, which we will use throughout this section in order to simplify the presentation.

The total size of $H$ (in bits) is bounded by $h\H$, which for $H=H_D$ is bounded by $|D|^{1+o(1)}$, and also by $h^{2+o(1)}$; the latter follows from Siegel's theorem \cite{Siegel}, which is ineffective but can be made effective under GRH, as in Proposition~\ref{prop:Dbound}.
As noted in the introduction, $H_D$ can be computed in $|D|^{1+o(1)}=(h\H)^{1+o(1)}$ time.

\subsection{Magma's algorithm}\label{subsec:magma}

\Magma's function {\tt CMtest} (as in \Magma~version 2.27-5) tests whether a given polynomial~$H\in \Z[X]$ is a Hilbert class polynomial $H_D$,
and returns the discriminant $D$ when it is, thereby solving both Problems 1 and 2.
The method used by \Magma{} is based on the following facts about~$H_D$ when~$h(D)>1$:
\begin{itemize}
\item The number of real roots of~$H_D$ is a power of~$2$ (\Magma{} does not exploit the fact that this power of $2$ must divide $h(D)$, as proved in Proposition~\ref{prop:real-roots}).
\item If~$D\equiv0\pmod4$, then the largest real root of~$H_D$ is
\[
j(\sqrt{D}/2) \ge j(\sqrt{-5}) > 1264538.
\]
\item If~$D\equiv1\pmod4$, then~$H_D$ has no real roots greater than~$1728$, and the smallest real root of $H_D$ is
\[
j((1+\sqrt{D})/2) \le j((1+\sqrt{-15})/2) < -191657.
\]
\item If $\tau$ lies in the usual fundamental region for~$\SL_2(\Z)$ acting on the upper half-plane and~$q=e^{2\pi i\tau}$, then~$|q|$ is small, in which case~$j(\tau)\approx q^{-1}+744$, since for $\Im(\tau)\ge\sqrt{|D|}/2$ with~$|D|\ge15$ we have $|q|\le\exp(-\pi\sqrt{15}) < 5\cdot10^{-6}$.
\end{itemize}
The algorithm is as follows, given a monic polynomial~$H\in\Z[X]$ of degree~$h$:
\begin{enumerate}[\hspace{1em}1.]
\item If $h=1$ return (\true{}, $D$) if~$H$ is one of the 13 $H_D$ of degree~$1$, else return \false{}.
\item Compute a bound on the real roots of~$H$ to determine the precision needed.
\item Compute the set of real roots $\RR^+$ of~$H$.
      If $\#\RR^+\ne 2^n$ for some $n$ return \false{}.
\item If $\max(\RR^+)>1728$ set $s\coloneqq 0$, set $r\coloneqq \max(\RR^+)$, and return \false{} if $r\le 1264538$.
      If $\max(\RR^+)\le 1728$ set $s\coloneqq 1$, set $r\coloneqq -\min(\RR^+)$, and return \false{} if $r\le 191657$.
\item Let $D=-\round((\log((-1)^s(r-744)/\pi))^2)$; return \false{} if $D\not\equiv s\pmod4$.
\item Return (\true{}, $D$) if $h=h(D)$ and $H = H_D$, else return \false{}.
\end{enumerate}

\begin{remark}
Computing the real roots is by far the most expensive step, due to the high precision that is used.
In practice one typically finds \emph{a posteriori} that one could have used much less precision, but proving this \emph{a priori} is challenging.
See \cite{Armitage} for a discussion of a related approach that includes explicit precision bounds.
\end{remark}

\subsection{Sage's algorithms}\label{subsec:sage}

Version 9.7 of \Sage{} provided two algorithms, both written by the first author.  Note that since version 10.0 (released in May 2023) the default algorithm used in \Sage{} is our implementation of Algorithm~\ref{alg:two}.

The first method implemented in version 9.7 is only available for degrees $h\le 100$.
It uses a precomputed list of negative discriminants~$D$ with~$h(D)=h\le 100$, based on Klaise's extension in~\cite{Klaise} of Watkins's determination in~\cite{Watkins} of the imaginary quadratic fields of class number~$h\le100$.
Once all the $H_D$ of a fixed degree~$h$ have been precomputed, we may compare any input $H\in \Z[X]$ of degree~$h$ with every possible $H_D$.
As noted in the introduction, it is not practical to precompute all the $H_D$ of degree $h$ when $h$ is large, but for small $h$ this method works quite well.

The second method implemented in version 9.7 is based on the ideas in \S\ref{subsec:GalRep}.
It takes as input an algebraic integer~$j$, computes its minimal polynomial $H$, and constructs the field~$F=\Q(j)$ and an elliptic curve~$E$ over~$F$ with $j(E)=j$.
It then takes up to twenty degree~$1$ primes~$\p$ of~$F$ of good reduction for~$E$ and norm at most $1000$, and computes the Frobenius trace~$a_{\p}$ of the reduction of~$E$ modulo~$\p$, and the discriminant~$\delta_{\p}=a_{\p}^2-4N(\p)$, skipping~$\p$ if $a_{\p}=0$ or~$\delta_{\p}=0$.
For each remaining~$\p$ the squarefree part of~$\delta_{\p}$ is determined.
A negative answer is returned if any two of these squarefree parts are distinct.
Otherwise the computed~$\delta_{\p}$ all have squarefree part~$d$.  The algorithm sets~$D_0=d$ if $d\equiv1\pmod4$ and $D_0=4d$ otherwise, and considers all candidate discriminants $D=D_0f^2$ such that $D\mid\delta_{\p}$ for all the nonzero $\delta_\p$ computed.
For each such~$D$, the algorithm checks whether $h(D)=\deg(H)$ and if so, computes $H_D$ and compares it to $H$.

\subsection{Charles' Algorithms}\label{subsec:Charles}

In a 2004 preprint~\cite{Charles}, Denis Charles gives three algorithms for testing whether an elliptic curve defined over a number field has CM, which we summarize below.
Our description of Charles' algorithms is adapted to our setting, where the input is a monic irreducible polynomial $H\in \Z[X]$.

\subsubsection{Direct approach}

Given $H\in \Z[X]$ of degree $h$, compute a bound $B$ such that $h(D)>h$ for all $|D|>B$, compute $H_D$ for all $|D|\le B$, and compare each $H_D$ to $H$.
This can be implemented as a deterministic algorithm that runs in time $Bh^{2+o(1)}$ using $B^{1+o(1)}$ space.
As noted in the introduction, one can deterministically compute $H_D$ in time $|D|^{1+o(1)}$, which is bounded by $h^{2+o(1)}$ for $h(D)\le h$, since $h(D)=O(|D|^{1/2}\log|D|)$.
Unconditional values for $B$ are exponential in $h$, but if one assumes GRH then Proposition~\ref{prop:Dbound} yields $B=h^{2+o(1)}$ and a running time of $h^{4+o(1)}$ using $h^{2+o(1)}$ space (the complexity depends only on $h$ because each $H_D$ has size $h^{2+o(1)}$, and we can test whether $H=H_D$ in $h^{2+o(1)}$ time, no matter how big $\H$ is).

\subsubsection{Randomized supersingularity testing}

Pick a random large prime $p$ and a random prime $\p$ of $F$ above $p$, construct $\overline E_j/\F_\p$, and use Schoof's algorithm \cite{Schoof} to compute $a_\p=N(\p)+1-\#\overline E_j(\F_\p)$.
If $a_\p=0$ report that $H$ likely is an HCP and otherwise report that it is not.
As written this is not quite a Monte Carlo algorithm, its success probability is not strictly bounded above 1/2, but if one performs the test twice and reports that $H$ is an HCP if either Frobenius trace is zero, it will be correct with probability close to 3/4 and strictly above 1/2 whenever $H$ is an HCP.
Serre's $O(x/(\log x)^{3/2+\epsilon})$ upper bound on the number of supersingular primes of norm bounded by $x$ for a non-CM elliptic curve over a number field \cite{Serre} implies that the algorithm will erroneously report that $H$ is an HCP with probability tending to $0$ as $x\to \infty$ and below $1/2$ for all sufficiently large $x$, yielding a Monte Carlo algorithm with two-sided error.
Charles chooses $p$ so that $\log p \approx h^2+\H$, which implies an expected running time of $(h^2\H)^{5+o(1)}$ using $(h^2\H)^{3+o(1)}$ space \cite[Cor.~11]{SS}.
Under the GRH, the time complexity can be improved to $(h^2\H)^{4+o(1)}$ time \cite[Cor.~3]{SS} and $(h^2\H)^{2+o(1)}$ space using Elkies improvement to Schoof's algorithm and using the space efficient algorithm of \cite{Sutherland:modpoly} to compute instantiated modular polynomials.

\subsubsection{Galois representations}

Pick a prime $\ell\ge 5$ that is large enough to guarantee that every non-CM elliptic curve defined over a number field of degree $h$ whose $j$-invariant has minimal polynomial with coefficients bounded by $\H$ has Galois representation $\overline \rho_{E,\ell}$ with image containing $\SL_2(\Fl)$.
Construct an elliptic curve $E/F$ whose $j$-invariant is a root of $H$, where $F:=\Q[X]/(H(X))$, and check if $\Gal(F(E[\ell])/F)$ is solvable.
Charles does not attempt to make the bound on $\ell$ explicit, but explicit lower bounds on $\ell$ can be derived from \cite[Cor.\ 1]{Pellarin}.
These bounds grow quadratically with the Faltings height of $E_j$ and include an $h^{4+o(1)}$ factor with a leading constant of $10^{78}$.
The degree of the number field $F(E[\ell])$ will be much larger than this, typically on the order of $\ell^4$.
Even if one assumes the GRH, this approach is unlikely to lead to a practical algorithm.

\subsection{Achter's Algorithm}\label{subsec:Achter}

Achter's algorithm determines whether $E_j$ has complex multiplication, and if so, computes the CM field $K$, but not the endomorphism ring.
The approach sketched by Achter in ~\cite{Achter} involves working over an extension $L/F$ where the base change $E_j'$ of $E_j$ to $L$ has everywhere good reduction, finding a prime~$\p$ of good ordinary for reduction $E_j'$, and computing the endomorphism algebra~$K$ of the reduction of $E_j'$ to the residue field $\F_\p$.
The algorithm then verifies that for every prime $\q$ of $K$ with $N(\q)\le B$, either the reduction of $E_j'$ at $\q$ is supersingular, or its endomorphism algebra is isomorphic to~$K$.
Under the GRH we can take $B=(h\H)^{2+o(1)}$, assuming $\log |\disc(L)|=O(h\H)$.
Provided one efficiently reduces~$H$ modulo many primes~$\q$ in batches using a product tree (as we do in our second algorithm), the running time is $B^{1+o(1)}=(h\H)^{2+o(1)}$ using $(h\H)^{1+o(1)}$ space.
Without the GRH the algorithm takes exponential time.

\subsection{Armitage's Algorithm}\label{subsec:Armitage}

In a 2020 preprint~\cite{Armitage}, Armitage gives a deterministic algorithm for inverting the $j$-function, with CM testing as an application.
This is based on ideas similar to those used in \Magma's algorithm, and in our notation takes $(h^2\H)^{1+o(1)}$ time and space (Armitage gives a more precise bound that makes all the logarithmic and doubly logarithmic factors explicit).

\section{Two new algorithms}\label{sec:newalgs}

We now present two new algorithms for identifying Hilbert class polynomials.
Both take an irreducible monic polynomial $H\in\Z[X]$ as input and return \true\ if $H = H_D$ for some imaginary quadratic discriminant $D$ and \false\ otherwise.
The second algorithm also determines the value of $D$ in the case $H=H_D$.
Both algorithms can be used to determine whether an elliptic curve $E$ over a number field admits complex multiplication (possibly defined over a quadratic extension), by taking $H$ to be the minimal polynomial of the $j$-invariant $j(E)$.
The second algorithm can be used to compute the geometric endomorphism ring of $E$, which will be $\Z$ if the algorithm returns \false, and the unique imaginary quadratic order of discriminant $D$ otherwise.
Once the geometric endomorphism ring $\End(E)$ of $E/F$ known, one can easily determine the ring of endomorphisms defined over $F$: it is $\End(E)$ when $\sqrt{\disc(\End(E))}$ lies in $F$ and $\Z$ otherwise.

We should note that the correctness of our first algorithm, which is deterministic, depends on the generalized Riemann hypothesis (GRH).
Our second algorithm is probabilistic, of Las Vegas type, and the correctness of its output does not depend on the GRH, although we will assume the GRH and some other heuristics when analyzing its complexity.

In the descriptions of our algorithms below, $\ss_\ell\in \Fl[x]$ denotes the supersingular polynomial in characteristic $\ell$, $\gcd(a,b)$ denotes the monic greatest common divisor of two univariate polynomials over a field, and $\Res(a,b)$ denotes the resultant of two univariate polynomials; see \cite[Ch.~12]{GKZ} for several equivalent definitions of $\Res(a,b)$.
For $a,b\in \Z[X,Y]$ we use $\Res_Y(a,b)$ to denote the resultant of $a,b\in \Z[X][Y]$, viewed as univariate polynomials in $Y$ with coefficients in $\Z[X]$.

\begin{algorithm}\label{alg:one}
Given a monic irreducible $H\in \Z[X]$ of degree $h$, return \true\ if $H$ is a Hilbert class polynomial and \false\ otherwise.
\end{algorithm}

\noindent
For increasing odd primes $\ell$:

\begin{enumerate}[\hspace{1em}1.]
\item If $\ell$ exceeds the bound $\ell(h)$ given by Corollary~\ref{cor:toobig}, return \false.\label{alg1:toobig}
\item Compute $\overline H:= H\bmod \ell\in \Fl[X]$, and if $\gcd(\overline H,\overline H')\ne 1$ proceed to the next $\ell$.\label{alg1:bad}
\item Compute $\ss_\ell\in \Fl[x]$, and if $\overline H$ divides $\ss_\ell$ proceed to the next $\ell$.\label{alg1:ss}
\item Compute the classical modular polynomial $\Phi_\ell\in \Z[X,Y]$.\label{alg1:phi}
\item Compute the resultant $G(X)\coloneqq \Res_Y(\Phi_\ell(X,Y),H(Y))\in \Z[X]$.\label{alg1:res}
\item If $G$ is not divisible by $H$ then return \false.\label{alg1:divf}
\item If $G/H\in\Z[X]$ is not divisible by $H$ then return \false.\label{alg1:divf2}
\item Return \true.\label{alg1:done}
\end{enumerate}

\begin{lemma}\label{lem:abD}
Let $D<0$ be a discriminant, and let $\ell$ be the least odd prime such that $H_D\bmod \ell$ is squarefree in $\Fl[x]$ and does not divide $\ss_\ell$.
Then $\left(\frac{D}{\ell}\right)=+1$.
\end{lemma}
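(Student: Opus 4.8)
The plan is to establish, for every odd prime $\ell$, the exact characterization: $H_D\bmod\ell$ is squarefree in $\Fl[X]$ and does not divide $\ss_\ell$ \emph{if and only if} $\leg{D}{\ell}=+1$. The lemma is then immediate, since by this characterization the least odd prime with both properties automatically satisfies $\leg{D}{\ell}=+1$, and such a prime exists because $\leg{D}{\ell}=+1$ for a positive density of primes, all but at most one of them odd. The characterization splits into an ``if'' part and an ``only if'' part.

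For the ``if'' part, suppose $\ell$ is odd with $\leg{D}{\ell}=+1$. Then $\ell\nmid D$, so $\ell$ is prime to the conductor $f$, is unramified in the ring class field $L$, and splits in $K=\Q(\sqrt{D})$. Squarefreeness of $H_D\bmod\ell$ is exactly Proposition~\ref{prop:squarefree}. For the divisibility, the roots of $H_D\bmod\ell$ in $\Flbar$ are the reductions, modulo a prime of $L$ above $\ell$, of the singular moduli $j([\a])\in\OO_L$; since $\ell$ splits in $K$ and $\ell\nmid f$, these are $j$-invariants of curves with CM by $\OD$ having good \emph{ordinary} reduction there, as recalled in \S\ref{subsec:basicCMfacts} and used in the proof of Proposition~\ref{prop:squarefree}. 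Ordinary $j$-invariants are never supersingular, so no root of $H_D\bmod\ell$ is a root of $\ss_\ell$, and therefore $H_D\bmod\ell$ does not divide $\ss_\ell$.

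For the ``only if'' part, suppose $\ell$ is odd with $\leg{D}{\ell}\ne+1$; I will show $\ell$ fails one of the two properties. If $H_D\bmod\ell$ is not squarefree we are done, so assume it is squarefree, and I claim that every root of $H_D\bmod\ell$ in $\Flbar$ is then a supersingular $j$-invariant; granting this, $H_D\bmod\ell$ (being squarefree) divides $\ss_\ell$, so the divisibility property fails. If $\leg{D}{\ell}=-1$ then $\ell\nmid D$, hence $\ell\nmid f$, and $\ell$ is inert in $K$, so curves with CM by $\OD$ reduce to supersingular curves at primes above $\ell$ (again \S\ref{subsec:basicCMfacts}), which gives the claim. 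The remaining case is $\leg{D}{\ell}=0$, i.e.\ $\ell\mid D$. Here the sub-case $\ell\nmid D_0$ cannot occur: then $\ell\mid f$, so $L/K$ is ramified at $\ell$ (as $\ell$ is odd), while $K/\Q$ is unramified at $\ell$ and $L=F(\sqrt{D_0})$ is unramified over $F$ at $\ell$; the tower then forces $F=\Q(j)$, and hence $H_D$, to be ramified at $\ell$, so $\ell\mid\disc(H_D)$ and $H_D\bmod\ell$ is not squarefree, contrary to assumption. Therefore $\ell\mid D_0$, so $\ell$ ramifies in $K$. Now a curve $E$ with CM by $\OD$ having good reduction at a prime above $\ell$ reduces either to a supersingular curve or to an ordinary curve with CM by an order in $K$ (Deuring; cf.\ \cite[Thm.~13.12]{Lang}); the latter is impossible, since an ordinary elliptic curve over a finite field of characteristic $\ell$ has $\ell$ split in its endomorphism algebra, whereas $\ell$ ramifies in $K$. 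So $\overline{E}$ is supersingular and $j(E)$ reduces to a supersingular $j$-invariant, proving the claim.

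The only genuinely delicate point is the ramified case $\ell\mid D_0$, which is covered neither by the ``split $\leftrightarrow$ ordinary, inert $\leftrightarrow$ supersingular'' dictionary of \S\ref{subsec:basicCMfacts} nor by the mod-$p$ factorization analysis of \S\ref{subsec:HCPs} (which assumes $\ell\nmid D$); ruling out $\ell\nmid D_0$ via ramification of $F/\Q$, and then invoking the trace criterion for ordinariness to force supersingularity when $\ell\mid D_0$, is what bridges the gap. No separate treatment of $h(D)=1$ or $D\in\{-3,-4\}$ is needed, since there $H_D$ is linear and all of the above applies unchanged.
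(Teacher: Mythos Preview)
Your proof is correct, and it takes a genuinely different route from the paper's. The paper first assumes $D$ is not one of the 101 abelian discriminants, observes that in the nonabelian case $L$ is the Galois closure of $F$, and uses this to argue that squarefreeness of $H_D\bmod\ell$ forces $\ell$ to be unramified in $L$ and hence $\ell\nmid D$; the not-dividing-$\ss_\ell$ condition then rules out $\ell$ inert, giving $\leg{D}{\ell}=+1$. The 101 abelian discriminants are dispatched by a finite computation.

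You instead prove the full biconditional uniformly, splitting by the value of $\leg{D}{\ell}$ rather than by abelian/nonabelian. Your treatment of $\ell\mid f$, $\ell\nmid D_0$ via the tower ramification argument (forcing $F/\Q$ ramified at $\ell$, hence $H_D$ not squarefree mod $\ell$) works in both the abelian and nonabelian cases, and your handling of $\ell\mid D_0$ via the observation that ordinary reduction forces $\ell$ to split in the endomorphism algebra covers the ramified case that \S\ref{subsec:basicCMfacts} does not explicitly address. The payoff is a computation-free proof that also yields the converse direction (which the paper does not prove directly, though it follows from Proposition~\ref{prop:squarefree} and the split-ordinary correspondence). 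The paper's approach, by contrast, stays closer to the tools already assembled in \S\ref{subsec:HCPs} and avoids invoking the ramified case of Deuring's reduction theorem, at the cost of the finite check for abelian~$D$.
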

\begin{proof}
First suppose that $D$ is not one of the 101 abelian discriminants.
Then any prime unramified in $F\coloneqq \Q[x]/(H_D(X))$ will be unramified in the ring class field~$L$ for the order $\OD$, since $L$ is the Galois closure of $F$ when $D$ is not abelian.
This implies that $\ell\nmid D$, since the ring class field $L$ is an abelian extension of $K\coloneqq \Q(\sqrt{D})$ that is ramified at every odd prime $\ell$ dividing $D$ (for $D'$ a discriminant not divisible by $\ell$ and $D=\ell^2D'$ we must have $h(D)>h(D')$ by equation~\eqref{eq:hD}, and this forces ramification at $\ell$; the cases $D'\ge -4$ are ruled out by $3|D'=-3$ and $h(-36)\ge h(-4)$).
Thus, in the nonabelian case, if $H_D\bmod \ell$ is squarefree then $\ell$ is unramified in $F$ and $\ell\nmid D$.
If $H_D\bmod \ell$ also does not divide $\ss_\ell$, then its roots are $j$-invariants of elliptic curves with ordinary reduction at primes above $\ell$, so $\ell$ is not inert in $K$ and $\left(\frac{D}{\ell}\right)=+1$.

Computing the least prime $\ell$ satisfying the hypothesis for each of the 101 abelian~$D$ shows that $\left(\frac{D}{\ell}\right)=+1$ in all of these cases as well.
\end{proof}

\begin{theorem}[GRH]\label{thm:alg1correct}
If Algorithm~\ref{alg:one} returns \true{} then $H$ is a Hilbert class polynomial.
Under GRH, if Algorithm~\ref{alg:one} returns \false{} then $H$ is not a Hilbert class polynomial.
\end{theorem}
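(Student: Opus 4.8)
The plan is to prove the two implications separately, relying on the results already assembled in the excerpt. For the first implication (soundness: if the algorithm returns \true{} then $H$ is an HCP), I would trace the path through which the algorithm can reach step~\ref{alg1:done}. The algorithm only returns \true{} after finding an odd prime $\ell\le\ell(h)$ for which $\overline H=H\bmod\ell$ is squarefree (step~\ref{alg1:bad}), does not divide $\ss_\ell$ (step~\ref{alg1:ss}), and for which the resultant $G(X)=\Res_Y(\Phi_\ell(X,Y),H(Y))$ is divisible by $H^2$ (steps~\ref{alg1:divf}--\ref{alg1:divf2}). Divisibility of $G$ by $H^2$ means exactly that $\ord_H(G)\ge 2$, i.e. $m_\ell(j)\ge 2$ for a root $j$ of $H$, by equation~\eqref{eqn:res-formula}. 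Now I would invoke Proposition~\ref{prop:non-CM-mellj}: if $j$ were not CM, then $m_\ell(j)\le 1$ for every prime $\ell$, a contradiction. Hence $j$ is CM, so $H$, being its minimal polynomial, is the Hilbert class polynomial $H_D$ for $D=\disc(\End(E_j))$. (One should note that the excluded values $j=0,1728$ pose no problem: these correspond to $H=X$ and $H=X-1728$, which are themselves HCPs, and one can arrange the algorithm to handle degree-one inputs directly, or simply observe that the argument above applies verbatim once $\ell$ is large enough to avoid the bad primes of these curves.)

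For the second implication (completeness under GRH: if $H$ is an HCP then the algorithm returns \true{}), suppose $H=H_D$. The only way the algorithm can return \false{} is at step~\ref{alg1:toobig}, or at step~\ref{alg1:divf}, or at step~\ref{alg1:divf2}. First, by Corollary~\ref{cor:toobig} (which is where GRH enters, via Propositions~\ref{prop:Dbound} and~\ref{prop:splitprimes}), there exists a prime $\ell\le\ell(h)$ such that $\overline{H_D}$ is squarefree mod~$\ell$ and its roots in $\overline\F_\ell$ are not supersingular; so the algorithm will not exit at step~\ref{alg1:toobig} before encountering a usable $\ell$. By Lemma~\ref{lem:abD}, the \emph{least} such odd prime $\ell$ — which is the one the loop reaches first after discarding the primes failing steps~\ref{alg1:bad} and~\ref{alg1:ss} — satisfies $\left(\frac{D}{\ell}\right)=+1$, so in particular $\ell\nmid f$ and $\ell$ splits in $K=\Q(\sqrt D)$. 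Then Corollary~\ref{cor:CM-ell-count} gives $m_\ell(j)=2$ for each root $j$ of $H_D$, i.e. $\ord_{H}(G)\ge 2$, so $G$ is divisible by $H$ and $G/H$ is divisible by $H$; the algorithm passes steps~\ref{alg1:divf}--\ref{alg1:divf2} and returns \true{} at step~\ref{alg1:done}.

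The one subtlety that I expect to require a little care — and hence the main obstacle — is the matching between "the least odd prime $\ell$ the algorithm's loop actually uses" and "the least odd prime $\ell$ satisfying the hypotheses of Lemma~\ref{lem:abD}". The loop skips $\ell$ exactly when $\overline H$ is not squarefree (step~\ref{alg1:bad}) or when $\overline H\mid\ss_\ell$ (step~\ref{alg1:ss}); these are precisely the primes excluded in the statement of Lemma~\ref{lem:abD}, so the first $\ell$ reached by the loop that does \emph{not} get skipped is indeed the least odd prime with $\overline H$ squarefree and $\overline H\nmid\ss_\ell$, and for this prime Lemma~\ref{lem:abD} applies. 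I would also want to record explicitly that for an HCP input this $\ell$ does not exceed $\ell(h)$, so that step~\ref{alg1:toobig} never fires prematurely; this is exactly the content of the second bound in Corollary~\ref{cor:toobig}, combined with the observation that Proposition~\ref{prop:squarefree} forces squarefreeness at any split prime, so that the minimal $\ell$ from Proposition~\ref{prop:splitprimes} already works. Assembling these observations, both implications follow, completing the proof.
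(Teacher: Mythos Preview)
Your proof is correct and follows essentially the same approach as the paper: both arguments use Proposition~\ref{prop:non-CM-mellj} for the soundness direction (if $m_\ell(j)\ge 2$ then $j$ is CM), and for the completeness direction under GRH both combine Corollary~\ref{cor:toobig} (to guarantee a usable $\ell\le\ell(h)$), Lemma~\ref{lem:abD} (to get $\left(\frac{D}{\ell}\right)=+1$ at the first such $\ell$), and Corollary~\ref{cor:CM-ell-count} (to conclude $m_\ell(j)=2$). The only difference is organizational: you split into soundness/completeness, while the paper argues by cases (HCP vs.\ non-HCP) after reaching step~\ref{alg1:phi}; the content is the same.
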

\begin{proof}
Corollary~\ref{cor:toobig} implies that if Algorithm~\ref{alg:one} returns \false{} in step~\ref{alg1:toobig} then $H$ cannot be an HCP unless the GRH is false.
Otherwise, the algorithm reaches step~\ref{alg1:phi} for a prime $\ell\le \ell(h)$ for which $\overline H$ is squarefree and does not divide $\ss_\ell$.
Let $j\in \Qbar$ be a root of $H$ and let~$E_j$ be an elliptic curve with $j$-invariant $j$ that has good reduction at $\ell$ (such an~$E_j$ exists since $j$ is integral).
Then $E_j$ has good ordinary reduction at $\ell$.

If $H$ is not an HCP then Proposition~\ref{prop:non-CM-mellj} implies $m_\ell(j)<2$ and the algorithm returns \false{} in step \ref{alg1:divf} or \ref{alg1:divf2}.
Otherwise, $H=H_D$ for some $D$ with $\left(\frac{D}{\ell}\right)=+1$, by Lemma~\ref{lem:abD}.
Proposition~\ref{cor:CM-ell-count} then implies $m_\ell(j) = 2$, and the algorithm returns \true.
\end{proof}

\begin{remark}
If the input polynomial $H$ is the minimal polynomial of the $j$-invariant of a $\Q$-curve $E$, the algorithm could in principle reach step~\ref{alg1:divf2} (but not step~\ref{alg1:done}).
However, we are not aware of any case where this happens!
In every example we have checked, $H$ is not squarefree modulo the prime degree $\ell$ of any isogeny between Galois conjugates of $E$.
If it were known that this never happens, then step~\ref{alg1:divf2} could be removed (but this would not significantly change the running time).
\end{remark}

\begin{remark}
In our intended application $H$ is the minimal polynomial of an algebraic integer $j(E)$ which is known \emph{a priori} to be irreducible.
To obtain an algorithm that works for any input polynomial one should add an irreducibility test.
The cost of this test will be negligible in almost any practical setting, but in the worst case it could dominate the complexity.
To see the necessity of irreducibility, note that Algorithm~\ref{alg:one} will return \true\ when~$H$ is a product of Hilbert class polynomials whose roots are $j$-invariants of elliptic curves for which the least prime of ordinary reduction not dividing $\disc(H)$ is the same for every factor of $H$.
This occurs when $H=H_{-43}H_{-63}$, for example.
\end{remark}

Recall that $\H$ denotes the logarithmic height of a polynomial $H\in\Z[X]$, the logarithm of the maximum of the absolute values of its coefficients.

\begin{theorem}\label{thm:alg1complexity}
Algorithm~\ref{alg:one} can be implemented as a deterministic algorithm that runs in $(h^2\H)^{1+o(1)}$ time using $(h\H)^{1+o(1)}$ space.
\end{theorem}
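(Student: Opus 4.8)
The plan is to walk through the eight steps of Algorithm~\ref{alg:one} and bound the cost of each, using the explicit prime bound $\ell \le \ell(h) = O(\log^2 h)$ from Corollary~\ref{cor:toobig} together with the height bound $\H \le 235\,h\log(h+1)^2(\llog(h+1)+4)^2 = h^{1+o(1)}$ for HCPs (which lets us replace $\H$ by $h$ whenever $H$ actually is an HCP, but we must still account for arbitrary $\H$ on false inputs). Throughout I would use fast arithmetic: multiplication of $n$-bit integers in $n^{1+o(1)}$ time, multiplication of degree-$n$ polynomials over $\Fl$ in $(n\log\ell)^{1+o(1)}$ time, and the standard $n^{1+o(1)}$-time algorithms for gcd, resultant, and exact division. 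The outer loop runs over $O(\log^2 h)$ primes $\ell$, each of size $O(\log\log h)$ bits, so the number of iterations is negligible and it suffices to bound the work in a single iteration by $(h^2\H)^{1+o(1)}$ (for steps touching $H$) or $(h\log h)^{1+o(1)}$ (for steps touching only objects mod $\ell$).

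For the individual steps: Step~\ref{alg1:toobig} is a comparison, free. Step~\ref{alg1:bad} reduces $H$ mod $\ell$ — this is $h$ coefficient reductions of $\H$-bit integers, costing $(h\H)^{1+o(1)}$ — followed by a gcd of two degree-$\le h$ polynomials over $\Fl$, costing $(h\log\ell)^{1+o(1)} = h^{1+o(1)}$. Step~\ref{alg1:ss} computes $\ss_\ell \in \Fl[x]$, which has degree $O(\ell) = O(\log^2 h)$ and can be produced in time polynomial in $\ell$ (e.g. from the Eisenstein-series / Hasse-invariant recursion, or by factoring a small modular polynomial), hence $h^{o(1)}$; the divisibility test $\overline H \mid \ss_\ell$ is a single division in $\Fl[x]$ of cost $(h\log\ell)^{1+o(1)} = h^{1+o(1)}$. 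Step~\ref{alg1:phi} computes $\Phi_\ell \in \Z[X,Y]$, which has $O(\ell^2)$ coefficients each of height $O(\ell\log\ell)$; this can be done in $\ell^{2+o(1)} = (\log h)^{4+o(1)} = h^{o(1)}$ time and space using, e.g., the algorithms in \cite{BLS} or \cite{Sutherland:modpoly}, so it is again negligible.

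The main obstacle — and the step dictating the complexity bound — is the resultant in Step~\ref{alg1:res}: $G(X) = \Res_Y(\Phi_\ell(X,Y), H(Y))$. Here $\Phi_\ell(X,Y)$, viewed in $\Z[X][Y]$, has $Y$-degree $\ell+1 = O(\log^2 h)$ with coefficients of $X$-degree $O(\ell)$ and height $O(\ell\log\ell)$, while $H(Y)$ has $Y$-degree $h$ and height $\H$. The resultant $G$ has degree $(\ell+1)\cdot$ (something) in $X$; more to the point, $G$ is essentially a product of at most a few HCPs (by Proposition~\ref{prop:LuminyFormula}) in the CM case, so $\deg G = O(\ell h)$ and its height is $O(h + \H + \ell\log\ell)$ — in all cases $G$ has bit-size $(h\H)^{1+o(1)}$. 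I would compute $G$ by evaluation–interpolation or, better, by a multimodular / Chinese-remainder approach: reduce modulo $O(h\H/\log h)$ primes $p$ of size $O(\log h)$, compute each $\Res_Y$ over $\F_p$ via the subresultant or half-gcd algorithm in $(\,(\ell+h)\log p\,)^{1+o(1)} = h^{1+o(1)}$ time, then CRT back. The total is $(h\H) \cdot h^{1+o(1)} = (h^2\H)^{1+o(1)}$ time; the space is bounded by the size of $G$ together with one prime's worth of working data, i.e. $(h\H)^{1+o(1)}$. This is the dominant term. Steps~\ref{alg1:divf} and~\ref{alg1:divf2} are exact polynomial divisions $G \div H$ (and then $(G/H) \div H$) over $\Z$, each of bit-size $(h\H)^{1+o(1)}$ and cost $(h\H)^{1+o(1)}$, and Step~\ref{alg1:done} is free. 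Summing over the $O(\log^2 h)$ outer iterations multiplies everything by $h^{o(1)}$, giving the claimed $(h^2\H)^{1+o(1)}$ time and $(h\H)^{1+o(1)}$ space. The one point requiring care in the writeup is the a priori degree and height bound on the intermediate resultant $G$ on inputs that are \emph{not} HCPs: there $G$ has degree $(\ell+1)h$ and height $O(h\log h + h\H + \ell\log\ell)$ by Hadamard's bound on the Sylvester determinant, which is still $(h\H)^{1+o(1)}$, so the multimodular count of primes and hence the complexity is unchanged.
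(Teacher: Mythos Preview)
Your bookkeeping for Steps~\ref{alg1:toobig}--\ref{alg1:phi} and \ref{alg1:divf}--\ref{alg1:done} is fine, and attacking the resultant in Step~\ref{alg1:res} by a multimodular/CRT scheme is a reasonable alternative to what the paper does, but there is a genuine gap in that step.

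The paper computes $G=\Res_Y(\Phi_\ell(X,Y),H(Y))$ directly over $\Z[X]$ as the determinant of the $(h+\ell+1)\times(h+\ell+1)$ Sylvester matrix, exploiting its sparsity (only $2h(\ell+1)$ nonzero entries, with $\ell=h^{o(1)}$) to bound the row reduction by $h^{2+o(1)}$ ring operations in $\Z[X]$ on polynomials of $X$-degree $O(\ell)$. Your CRT alternative can be made to work, but your per-prime cost is wrong: after reducing modulo a prime~$p$, the object $\Res_Y(\Phi_\ell(X,Y)\bmod p,\ H(Y)\bmod p)$ is a polynomial in $\F_p[X]$ of degree $(\ell+1)h$, not a scalar in $\F_p$, and it cannot be produced by a single univariate subresultant or half-gcd in $((\ell+h)\log p)^{1+o(1)}$ time. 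You still have to handle the $X$-variable, for instance by evaluating at $O(\ell h)$ points $x\in\F_p$, computing a univariate resultant at each, and interpolating; that costs $h^{2+o(1)}$ per prime, not $h^{1+o(1)}$. Your Hadamard height bound is also off: only $\ell+1$ rows of the Sylvester matrix carry coefficients of~$H$, so the height of~$G$ is $O(h\cdot\ell\log\ell + (\ell+1)\H + (h+\ell)\log(h+\ell)) = (h+\H)^{1+o(1)}$, not $O(h\H)$. With these corrections the CRT count becomes $O((h+\H)/\log p)$ primes at $h^{2+o(1)}$ each, giving $(h^2(h+\H))^{1+o(1)}$, which agrees with $(h^2\H)^{1+o(1)}$ once $\H\ge h^{1-o(1)}$; you should make that reduction explicit.

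A smaller point on determinism: the algorithms of \cite{BLS} and \cite{Sutherland:modpoly} that you cite for $\Phi_\ell$ are Las~Vegas, and in any case $\Phi_\ell$ has $\Theta(\ell^3\log\ell)$ bits and cannot be written down in $\ell^{2+o(1)}$ time. The paper invokes the deterministic algorithm of~\cite{BCRS}, which runs in $\ell^{4+o(1)}=h^{o(1)}$ time; you should cite that instead to keep the implementation deterministic as the theorem requires.
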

\begin{proof}
For $\ell\le \ell(h)=O(\log^2\!h)$ the time to compute the reduction $\overline H$ of $H$ modulo $\ell$ is $(h\H)^{1+o(1)}$.
Once $\overline H$ has been computed, the time to compute $\gcd(\overline H,\overline H')$ and the supersingular polynomial $\ss_\ell$ (via the formula in \cite[Thm.~1]{Finotti}, for example), are both polynomial in $\ell$, hence polylogarithmic in $h$.
The number of iterations is $O(\log^2\!h)=h^{o(1)}$, so the total time needed to reach step~\ref{alg1:phi} is bounded by $(h\H)^{1+o(1)}$, as is the space.
The modular polynomial $\Phi_\ell$ can be computed using the deterministic algorithm described in \cite{BCRS} in $\ell^{4+o(1)}=h^{o(1)}$ time using $h^{o(1)}$ space.

In step~\ref{alg1:res} we may compute $\Res_Y(\phi_\ell(X,Y),H(Y))$ as the determinant of the corresponding Sylvester matrix with entries in $\Z[X]$.
This is a square $(h+\ell+1)\times (h+\ell+1)$ matrix with $2h(\ell+1)$ nonzero entries; it is sparse, since $\ell=h^{o(1)}$.
We can compute the determinant by making the matrix upper triangular using $h^{1+o(1)}$ row operations, each of which involves $h^{1+o(1)}$ ring operations in $\Z[X]$ on polynomials of degree $O(\ell)$.
The total time is bounded by $(h^2\H)^{1+o(1)}$, and if we delete all but the diagonal entry in each row once it is no longer needed, the space is $(hH)^{1+o(1)}$.
\end{proof}

\begin{remark}
The complexity of Algorithm 1 (both in theory and in practice) is dominated by the resultant computation in step 5.
In the proof of Theorem~\ref{thm:alg1complexity} use naive row reduction to argue that this can be done in $(h^2\H)^{1+o(1)}$ time.
Applying the faster algorithm of van der Hoeven and Lecerf \cite{vanderHoevenLecerf} would improve this to complexity bound to $(h\H)^{1+o(1)}$ if one could prove that the polynomials $\Phi_\ell(X,Y)$ and $H(Y)$ are ``sufficiently generic''.  We have not attempted to do this.
\end{remark}

\begin{remark}\label{rem:auxp}
The practical performance of Algorithm 1 on inputs that are not HCPs can be improved without significantly impacting its performance on inputs that are HCPs by computing the resultant $G$ over a finite field $\Fp$ for some small auxiliary prime $p$ and checking the divisibility conditions in steps~\ref{alg1:divf} and~\ref{alg1:divf2} for $H$ in $\Fp$ before computing the resultant over $\Z$.
If one uses a CRT-based resultant computation this can be done during the resultant computation at no additional cost.
\end{remark}

We now present our second algorithm, which is not deterministic (an efficient implementation will use Rabin's algorithm \cite{Rabin:roots} to find roots of polynomials in $\Fp[x]$), but has several advantages over Algorithm~\ref{alg:one}: it is significantly faster in practice, it solves both Problems 1 and 2, and its correctness does not depend on the GRH.
To simplify the presentation we elide some details but include references to relevant remarks.

\begin{algorithm}\label{alg:two}
Given a monic irreducible $H\in \Z[X]$ of degree $h$, determine if $H=H_D$ for some $D$.
If so, return \true{} and the value of $D$, otherwise return \false{}.
\end{algorithm}

\noindent
Let $\mathcal D$ be the set of integers $h^+|h$ that are powers of $2$ of the same parity as $h$.\\
Let $\pmin\coloneqq \lceil 37 h^2(\llog(h+1)+4)^4\rceil$ (see Remark~\ref{rem:pmin}).\\
For increasing primes $p\ge p_{\rm min}$ (see Remark \ref{rem:batching}) :
\begin{enumerate}[\hspace{1em}1.]
\item Compute $\overline H\coloneqq H\bmod p\in \Fp[x]$.
\item Compute $d\coloneqq \deg\gcd(\overline H(x),x^p-x)$.
\item If $d=0$ then proceed to the next prime $p$.
\item If $\gcd(\overline H,\overline H')\ne 1$ then proceed to the next prime $p$.\label{alg2:gcd}
\item If $d < h$ and $d\not\in \mathcal D$ then return \false.\label{alg2:checkd1fac}
\item Let $\overline E/\Fp$ be an elliptic curve whose $j$-invariant is a root of $\overline H$.\label{alg2:getE}
\item If $\overline E$ is supersingular then proceed to the next prime $p$.\label{alg2:sstest}
\item Compute $D\coloneqq \disc(\End(\overline E))\in \Z$ (see Remark~\ref{rem:end}).\label{alg2:endE}
\item If $h(\disc\End(\overline E))\ne h$ return \false{}, otherwise compute $H_D(X)$ (see Remark~\ref{rem:noHD}).\label{alg2:hD}
\item If $H=H_D$ then return \true{} and $D$; otherwise return \false.\label{alg2:final}
\end{enumerate}

As shown in the proof of Theorem~\ref{thm:alg2correct} below, Algorithm~\ref{alg:two} will terminate once it finds a prime $p\ge \pmin$ for which the reduction of $H$ modulo $p$ is squarefree and has an $\Fp$-rational root that is the $j$-invariant of an ordinary elliptic curve. The existence of such a prime is guaranteed by Proposition~\ref{prop:ordprimes}.

\begin{theorem}\label{thm:alg2correct}
Algorithm~\ref{alg:two} terminates on all inputs, and it returns \true{} and $D$ if and only if $H$ is the Hilbert class polynomial $H_D$.
\end{theorem}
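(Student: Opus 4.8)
The plan has two parts: termination, and correctness of the output. For \emph{termination}, the key point is that the main loop only fails to halt if it never reaches step~\ref{alg2:final} with a conclusive answer. By Proposition~\ref{prop:ordprimes}, applied to the curve $E_j/F$ where $F=\Q[X]/(H(X))$ and $j$ is a root of $H$, the set of rational primes $p$ admitting a degree-one prime $\p\mid p$ of good ordinary reduction has positive density (at least $1/(2h)$), so in particular is infinite. For all but finitely many such $p$ we also have $p\ge\pmin$ and $p$ unramified in $F$, so $\overline H$ is squarefree modulo $p$ (step~\ref{alg2:gcd} passes) and $\overline H$ has a root in $\Fp$ (so $d\ge1$; step~3 passes), and that root is the $j$-invariant of an ordinary curve (step~\ref{alg2:sstest} passes). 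First I would check that such a prime also survives step~\ref{alg2:checkd1fac}: either $H$ is not an HCP, in which case we must argue the algorithm already returned \false{} at an earlier prime or does so now — but in fact if the algorithm has not yet returned, then at this prime it proceeds to steps~\ref{alg2:endE}--\ref{alg2:final} and returns something; or $H=H_D$, in which case $d=h^+\in\mathcal D$ by Proposition~\ref{prop:real-roots} and the factorization description in \S\ref{subsec:HCPs} (when $\overline H$ has a rational root that is ordinary, $p$ splits in $K$ and all irreducible factors have the same degree, so if $d<h$ we are in a split non-principal situation — wait, no: a rational root forces a linear factor, and in the split case all factors have equal degree, hence $d=h$). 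So for an HCP, once a split ordinary prime is reached we have $d=h$, step~\ref{alg2:checkd1fac} is vacuous, and the algorithm reaches step~\ref{alg2:final} and halts. In all cases, the loop reaches a \texttt{return} statement after finitely many primes.

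For \emph{correctness}, there are two directions. If the algorithm returns \true{} and $D$, it did so at step~\ref{alg2:final} after verifying $H=H_D$ for the specific discriminant $D=\disc(\End(\overline E))$ computed in step~\ref{alg2:endE}; since the equality $H=H_D$ is checked literally, $H$ is the Hilbert class polynomial $H_D$, with no dependence on GRH or heuristics. Conversely, suppose $H=H_D$. I would show the algorithm cannot return \false{}. A spurious \false{} could only come from step~\ref{alg2:checkd1fac} or step~\ref{alg2:hD}. Step~\ref{alg2:checkd1fac}: when the algorithm reaches it at a prime $p$ that has passed steps 3--\ref{alg2:gcd}, $\overline H=H_D\bmod p$ is squarefree with at least one $\Fp$-rational root; by the factorization analysis in \S\ref{subsec:HCPs}, if $p$ splits in $K$ then $d=h^+\mid h$ is a power of $2$ of the right parity, hence $d=h$ or $d\in\mathcal D$; if $p$ is inert in $K$ the number of linear factors is $h_2=h^+\in\mathcal D$ as well; and in the abelian case $d=h=h^+$. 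So the condition "$d<h$ and $d\notin\mathcal D$" never triggers on an HCP, and step~\ref{alg2:checkd1fac} passes. Step~\ref{alg2:hD}: we reach it only after step~\ref{alg2:sstest}, so $\overline E$ is ordinary, and by the remark at the end of \S\ref{subsec:basicCMfacts} (or \cite[Theorem 13.12]{Lang}) $\End(\overline E)=\End(E_j)=\OD$, so $\disc(\End(\overline E))=D$ and $h(D)=h=\deg H$; thus step~\ref{alg2:hD} passes and the algorithm proceeds to step~\ref{alg2:final}, where it finds $H=H_D$ and returns \true{} with the correct $D$.

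The main obstacle is the case analysis in step~\ref{alg2:checkd1fac}: one must carefully verify that \emph{every} prime $p\ge\pmin$ at which $\overline H=H_D$ is squarefree with an $\Fp$-rational root yields $d\in\mathcal D\cup\{h\}$, drawing on all three factorization patterns (split, inert, abelian) catalogued in \S\ref{subsec:HCPs}, and on Proposition~\ref{prop:real-roots} to know $h^+$ is a power of $2$ dividing $h$ with $h^+\equiv h\pmod 2$. The delicate point is that a rational root need not force $p$ to split: in the inert case $\overline H$ has $h_2=h^+$ linear factors, which is still in $\mathcal D$, so the step is safe; one just has to be sure no other factorization pattern sneaks in. The rest — termination from Proposition~\ref{prop:ordprimes}, and the "only if" direction from the literal comparison $H=H_D$ — is routine once this is pinned down. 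I would also note explicitly that $\pmin$ is chosen (Remark~\ref{rem:pmin}) so that $p\ge\pmin$ forces $H$, if an HCP, to have $|D|<p$, which is what lets step~\ref{alg2:endE} recover $D$ correctly from $\End(\overline E)$; this is used implicitly above and should be stated.
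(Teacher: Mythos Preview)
Your proof is essentially correct and follows the same strategy as the paper: termination via Proposition~\ref{prop:ordprimes}, the ``only if'' direction from the literal comparison $H=H_D$ in step~\ref{alg2:final}, and the ``if'' direction by checking that steps~\ref{alg2:checkd1fac} and~\ref{alg2:hD}--\ref{alg2:final} never falsely reject an HCP. Your case analysis for step~\ref{alg2:checkd1fac} (split/inert/abelian) is more explicit than the paper's, which simply invokes Propositions~\ref{prop:real-roots} and~\ref{prop:mod-p-factors}; and for step~\ref{alg2:hD} the paper phrases the argument as ``$p$ splits completely in the ring class field~$L$, so the roots of $\overline H$ are $j$-invariants of curves with CM by $\OD$'', which is equivalent to your appeal to preservation of $\End$ under good ordinary reduction.

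One correction: your closing remark about $\pmin$ is a misconception. The value of $\pmin$ plays \emph{no} role in the correctness of step~\ref{alg2:endE}; as Remark~\ref{rem:pmin} says explicitly, the output is correct for any $\pmin$, including $\pmin=0$. The identity $\disc(\End(\overline E))=D$ when $H=H_D$ follows precisely from the CM fact you already cited (good ordinary reduction preserves the endomorphism ring), independent of the size of $p$ relative to $|D|$. The lower bound $\pmin$ is purely an efficiency device. A smaller quibble: ``$p$ unramified in $F$'' does not imply $\overline H$ squarefree (the implication goes the other way); what you need is simply that only finitely many primes divide $\disc(H)$.
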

\begin{proof}
Algorithm~\ref{alg:two} only returns \true{} in step~\ref{alg2:final} after it has verified that $H=H_D$, and it is clearly correct in this case.
It remains only to show that the algorithm always terminates, and that it never returns \false{} when $H$ is an HCP.
Proposition~\ref{prop:ordprimes} implies that the set of degree one primes of $F\coloneqq \Q[X]/(H(X))$ of good ordinary reduction for any elliptic curve $E_j$ whose $j$-invariant is a root of $H$ has positive density, and the algorithm will terminate once it reaches such a prime.

Now suppose $H$ is an HCP $H_D$.
Propositions \ref{prop:real-roots} and~\ref{prop:mod-p-factors} imply that the algorithm cannot return \false\ in step \ref{alg2:checkd1fac}.
Proposition~\ref{prop:mod-p-factors} implies that when Algorithm~\ref{alg:two} reaches step~\ref{alg2:endE} the prime $p$ splits completely in $F\coloneqq \Q[X]/(H(X))$, and in $K=\Q(\sqrt{D})$, since $\overline E$ is ordinary.
It follows that $p$ splits completely in the ring class field $L$ (both in the abelian and nonabelian cases), so the roots of $\overline H$ are $j$-invariants of elliptic curves over $\Fp$ with CM by $\OD$.
The algorithm will thus compute $\disc(\End(\overline E))=D$, find $h(D)=h$, compute $H_D=H$, and correctly return \true{} and $D$.
\end{proof}

\begin{remark}\label{rem:batching}
To achieve the desired asymptotic complexity, rather than iteratively enumerating primes $p$ and computing each $\overline H\coloneqq H\bmod p$ individually, one should use a product/remainder tree to compute reductions $\overline H$ modulo a set of primes that is large enough for their product to have logarithmic height greater than $\H$.
This is not a purely theoretical asymptotic issue, it will yield noticeably better performance when $\H$ is large but still well within the practical range.

In order to (heuristically) apply the Chebotarev density theorem in our complexity analysis, we also want to use random primes, say sets of size $10\max(h,\H)$) chosen from increasing intervals (of length $\approx \pmin$, say) above $\pmin$.
To minimize the time spent on primality proving (which takes $(\log p)^{4+o(1)}$ expected time via~\cite{Bernstein}) one should use Miller-Rabin primality tests \cite{Miller,Rabin:primality} that cost only $(\log p)^{2+o(1)}$ to generate random integers in the desired interval that are very likely to be prime.
This procedure may be omitted in practical implementations by those willing to assume the stronger heuristic that sequential primes behave like random ones.
\end{remark}

\begin{remark}\label{rem:pmin}
The value of $p_{\min}$ used in the algorithm ensures that $4p>|D|$ when $H=H_D$ under GRH (via Proposition~\ref{prop:Dbound}) and it serves two purposes, one practical and one theoretical.
The practical purpose is to prevent the algorithm from wasting time on primes that are too small (a Hilbert class polynomial $H_D$ cannot split completely in $\Fp[x]$ when $4p<|D|$).
The theoretical purpose is to ensure that when $H$ is an HCP we can use a GRH-based explicit Chebotarev bound to argue that after testing $\approx 2h(D)$ random primes $p\in [\pmin,2\pmin]$ we can expect to find one that splits completely in the ring class field, as discussed below.
Our heuristic assumptions apply to any $\pmin=h^{2+o(1)}$, so the exact choice of $\pmin$ is not critical, but a good choice may improve performance.
Note that the algorithm will produce the correct output no matter what $\pmin$ is (including when $\pmin=0$).
\end{remark}

\begin{remark}\label{rem:end}
When computing $\End(\overline E)$ in step~\ref{alg2:endE}, we know that if $E$ has CM by $\OD$, then we must have $h(D)=h$.
When applying the GRH-based subexponential time algorithm of \cite{Bisson, BissonSutherland}, the first step is to compute $a_p\coloneqq p+1-\#\overline E(\Fp)$ by counting points and to then factor $a_p^2-4p$ as $v^2D_0$ for some fundamental discriminant~$D_0$.
The discriminant $D=\disc(\End(\overline E))$ is then of the form $D=f^2D_0$ for some divisor $f$ of $v$ which satisfies $h(f^2D_0)=h$.
Our choice of $\pmin$ ensures that for $p\in [\pmin,2\pmin]$ we have $D\ge h^2/\llog^4 h$.
This forces $v/f$ to be quite small: $O(\llog^4 h)$ under GRH.
In other words, among the (possibly numerous) divisors of $v$, there are only a small number of possible $f$, each determined by a small cofactor $c=v/f$.
We can efficiently compute $h(D_0)$ and use \eqref{eq:hD} to compute $h(f^2D_0)$ for all $f|v$ with small cofactor.
If no candidate $f|v$ satisfies $h(f^2D_0)=h$ we can immediately return \false{}, and if only one does, we can assume $D=f^2D_0$, skip the computation of $\End(E)$, and proceed to the next step (which will compute $H_D$ and verify our assumption).
Even when there is more than one candidate $f$, having a short list of candidates for which the ratio $v/f$ is small speeds up the computation of $\End(E)$, reducing it to descents on $\ell$-volcanoes for the small primes $\ell$ dividing $v/f$.  The optimizations described in this remark are incorporated in the implementation of our algorithm described in \S \ref{sec:timings} and provide a noticeable practical speedup but have no impact on the asymptotic complexity (under GRH), since the computation of $\End(\overline E)$ is not the asymptotically dominant step.
\end{remark}

\begin{remark}\label{rem:noHD}
If one knows \emph{a priori} that $H$ is an HCP (via Algorithm~\ref{alg:one}, for example), one can simply return $D$ after step \ref{alg2:endE}, there is no need to compute $H_D$.
\end{remark}

Let us now consider the complexity of Algorithm~\ref{alg:two}.
There is an effective form of the Chebotarev density theorem under GRH due to Lagarias--Odlyzko \cite{LagariasOdlyzko} and Serre \cite{Serre}, that has been made completely explicit by Winckler \cite{Winckler}.
In the case of interest to us the theorem states that for any Galois extension $M/\Q$ with Galois group $G$, any set $C$ that is a union of conjugacy classes of $G$, and $x\ge 2$ we have
\begin{equation}\label{eq:chebotarev}
\left|\pi_C(x)-\frac{|C|}{|G|}\Li(x)\right| \le \frac{|C|}{|G|}\sqrt{x}\left(\left(32+\frac{181}{\log x}\right)\log d_M + \left(28\log x+330 + \frac{1655}{\log x}\right)n_M\right),
\end{equation}
where $\pi_C(X)$ counts primes $p\le x$ with $\Frob_p\subseteq C$, $d_M\coloneqq \disc \OO_M$, and $n_M\coloneqq [M:\Q]$.

We would like to apply \eqref{eq:chebotarev} to the splitting field of $H$ (the ring class field $L$ when $H=H_D$ is nonabelian), with $C$ the union of all conjugacy classes of $G$ that fix at least one root.
The proof of Proposition~\ref{prop:ordprimes} implies $|C|/|G|\ge 1/h$, and \eqref{eq:chebotarev} implies that for all sufficiently large $x$ we should expect to find an $\Fp$-rational root of a squarefree $\overline H\in \Fp[X]$ after trying $h$ primes $p$, on average, and after $2h$ primes we should find a root that is an ordinary $j$-invariant.
Once this occurs the algorithm will compute $D=\disc(\End(\overline E))$ and $H_D$ and terminate.

When $H$ is an HCP we will have $n_M=h$ and $d_M\approx|D|^h$, in which case ``sufficiently large'' means $x\ge ch^2\log^2 h$ for a suitable choice of $c$, which is not much larger than the value of $\pmin$ used by the algorithm (we could increase $\pmin$ to $ch^2\log^2 h$ without changing the complexity of the algorithm).

But if $H$ is not an HCP we could easily have $n_M=h!$ nearly as large as~$h^h$, which makes it infeasible to use primes $p$ that are large enough for us to apply~\eqref{eq:chebotarev}.
However, in practice one finds that $\pmin$ is already sufficiently large: the algorithm reliably finds suitable primes~$p$ after roughly the number of iterations predicted by the Chebotarev density theorem, even when $H$ is not an HCP.
Thus, in order to understand and reliably predict the likely performance of our algorithm in practice, we analyze its complexity under the following heuristic assumption:
\medskip

\begin{enumerate}[(\textbf{H})]
\item For $\pmin\coloneqq\pmin(h)=h^{2+o(1)}$, among the primes $p\in [\pmin,2\pmin]$, the number of~$p$ for which $\overline H$ has a root in $\Fp$ that is the $j$-invariant of an ordinary elliptic curve is bounded below by $c\pmin/(h\log h)$ for some absolute constant $c$ as $h\to \infty$.
\end{enumerate}

We now analyze the asymptotic complexity of Algorithm~\ref{alg:two} under this heuristic assumption, assuming an implementation that uses Remark~\ref{rem:batching} to accelerate the enumeration of primes.

\begin{theorem}[Heuristic]\label{thm:alg2complexity}
Assume the GRH, heuristic (\textbf{H}), and the heuristics listed in \cite[\S7.1]{Sutherland:HCP}.
Algorithm~\ref{alg:two} can be implemented as a Las Vegas algorithm that runs in
\[
h^2(\log h)^{3+o(1)}+h(h+\H)\log(h+\H)^{2+o(1)} = h(h+\H)^{1+o(1)}
\]
expected time, using at most $h(h+\H)\log(h+\H)^{1+o(1)}$ space.
\end{theorem}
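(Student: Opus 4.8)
The plan is to split the analysis into (i) a bound on the expected number of primes $p$ that Algorithm~\ref{alg:two} examines before it halts, and (ii) a bound on the bit operations per examined prime, carefully separating the cheap \emph{screening} of steps~1--\ref{alg2:sstest}, performed for every examined prime, from the one-time \emph{identification} work of steps~\ref{alg2:endE}--\ref{alg2:final}, which always returns and is thus executed at most once over the whole run. For (i): the algorithm halts at the first prime $p\ge\pmin$ for which $\overline H$ is squarefree with an $\Fp$-rational root that is an ordinary $j$-invariant, since such a $p$ carries the algorithm into steps~\ref{alg2:getE}--\ref{alg2:final}. By~(\textbf{H}) the interval $[\pmin,2\pmin]$ contains at least $c\pmin/(h\log h)$ primes with an ordinary $\Fp$-rational root (the non-squarefree ones among these are few, and absent altogether when $H$ is an HCP by Proposition~\ref{prop:squarefree}), while the prime number theorem gives $(1+o(1))\pmin/(2\log h)$ primes in that interval since $\log\pmin=(2+o(1))\log h$; hence a prime sampled at random from the interval as in Remark~\ref{rem:batching} is suitable with probability at least $(2c+o(1))/h$, and, passing to successive dyadic intervals if necessary, the expected number of primes examined is $O(h)$. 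When $H=H_D$ the same count also follows from the explicit GRH Chebotarev bound~\eqref{eq:chebotarev} applied to the ring class field $L$, for which $n_M=h$, $\log d_M=O(h\log|D|)$, and $|D|=h^{2+o(1)}$ by Proposition~\ref{prop:Dbound}, so that $\pmin$ already exceeds the threshold past which the main term of~\eqref{eq:chebotarev} dominates; the role of~(\textbf{H}) is to supply the same conclusion in the non-CM case, where $n_M$ can be as large as $h!$ and~\eqref{eq:chebotarev} is vacuous.

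For (ii), the screening of a single prime is dominated by step~2: forming $x^p\bmod\overline H$ takes $O(\log p)=O(\log h)$ multiplications in $\Fp[x]/(\overline H)$ and a fast Euclidean gcd, totalling $h(\log h)^{2+o(1)}$ bit operations; the squarefreeness test of step~\ref{alg2:gcd}, root-finding for step~\ref{alg2:getE} by Rabin's probabilistic algorithm, and the supersingularity test of step~\ref{alg2:sstest} are no worse, provided the last uses an algorithm of cost $(\log p)^{O(1)}$ rather than building the supersingular polynomial $\ss_p$, whose size $p=h^{2+o(1)}$ would be fatal. Multiplying by the $O(h)$ examined primes is subsumed in the first summand $h^2(\log h)^{3+o(1)}$. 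Against this is amortized the cost of producing the reductions $\overline H\bmod p$: using the product/remainder-tree batching of Remark~\ref{rem:batching} over the $O(h)$ primes, each of $O(\log h)$ bits, all residues are obtained in $h(h+\H)\log(h+\H)^{1+o(1)}$ bit operations, namely the cost of reducing $H$, of total bit size $O(h\H)$, modulo an integer of bit size $O(h\log h)$ and then splitting down the tree; generating the primes with Miller--Rabin costs $(\log h)^{O(1)}$ each and is absorbed.

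The one-time identification work is reached at most once. Finding a root of $\overline H$ and building $\overline E$ is negligible; computing $D=\disc(\End(\overline E))$ uses point counting over $\Fp$ (cost $(\log p)^{O(1)}=h^{o(1)}$) followed by the GRH-based subexponential algorithm of~\cite{Bisson,BissonSutherland} (cost $L_{|D|}[1/2]=h^{o(1)}$, since $\log|D|=O(\log h)$, using $|D|=h^{2+o(1)}$ and the optimizations of Remark~\ref{rem:end}); factoring $a_p^2-4p$ and computing $h(D_0)$ and, via~\eqref{eq:hD}, the candidate values $h(f^2D_0)$ are also $h^{o(1)}$; and if $h(D)=h$ we compute $H_D$ by the CRT method in $|D|^{1+o(1)}$ time, which is where the heuristics of~\cite[\S7.1]{Sutherland:HCP} enter. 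Since $|D|\le 4p<8\pmin=h^{2+o(1)}$ unconditionally, this costs at most $h^{2+o(1)}$ whether or not $H$ is an HCP, and the final comparison with $H_D$ is $h(h+\H)^{1+o(1)}$. Summing the two parts and using $h\le h+\H$, so that $h^2(\log h)^{3+o(1)}\le(h(h+\H))^{1+o(1)}$, yields the stated expected time $h(h+\H)^{1+o(1)}$; the space is dominated by the product/remainder trees and the single stored $H_D$, each of bit size $h(h+\H)\log(h+\H)^{1+o(1)}$. Because Theorem~\ref{thm:alg2correct} guarantees the output is always correct and only the running time is randomized (over the choice of primes and the coin flips of Rabin's algorithms), this is a Las Vegas algorithm.

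The step I expect to be the real obstacle is part~(i): for $H=H_D$ one must check that $\pmin=h^{2+o(1)}$ genuinely lies above the threshold $\asymp h^2(\log h)^2$ at which the error term of~\eqref{eq:chebotarev} for $L$ becomes dominated by $\tfrac{|C|}{|G|}\Li(x)$, so that a fraction $\gtrsim 1/h$ of the primes split suitably; and for non-CM $H$ there is no Chebotarev substitute for~(\textbf{H}), since the splitting field of a non-HCP $H$ can have degree close to $h^h$. The remaining work is a routine accounting of quasilinear integer and polynomial arithmetic.
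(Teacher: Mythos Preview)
Your approach is essentially identical to the paper's: bound the expected number of primes by $O(h)$ via heuristic~(\textbf{H}), amortize the reductions $H\bmod p$ with product/remainder trees, bound the per-prime screening cost, and charge the identification work (computing $\End(\overline E)$, $h(D)$, $H_D$) only once. The structure, the choice of subroutines, and the justification for why each step is absorbed are all the same as in the paper.

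There is one small accounting slip worth noting. You quote $h(\log h)^{2+o(1)}$ for the cost of computing $x^p\bmod\overline H$ and the associated gcds at a single prime, but this is one $\log h$ factor short: with $\log p=\Theta(\log h)$, a single multiplication in $\Fp[x]/(\overline H)$ via Kronecker substitution already costs $M(h\log h)=h(\log h)^{2+o(1)}$ bit operations, and you need $O(\log p)=O(\log h)$ of them, giving $h(\log h)^{3+o(1)}$ per prime and $h^2(\log h)^{3+o(1)}$ in total, which is exactly the first summand in the theorem. Similarly, your product-tree estimate is one $\log$ factor lower than the $h(h+\H)\log(h+\H)^{2+o(1)}$ in the stated bound. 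Neither slip affects the coarse $h(h+\H)^{1+o(1)}$ conclusion, but if you intend to match the explicit two-term expression in the theorem you should restore those factors.
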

\begin{proof}
Under our heuristic assumptions we expect the algorithm to use $O(h)$ primes $p\in [\pmin,2\pmin]$ with $\pmin=h^{2+o(1)}$.
As noted in Remark~\ref{rem:batching}, the expected time to generate $O(h)$ random primes in this interval and rigorously prove their primality is bounded by $h(\log h)^{4+o(1)}$, and if we use product trees and batches of $\H/(\log h)$ primes at a time, we will spend a total of $h(h+\H)(\log (h+\H))^{2+o(1)}$ time computing reductions $\overline H$.
We can process each batch using $O(h\H+\H\log\H)$ space by using a separate product tree of size $O(H\log H)$ space for each coefficient and noting that the total size of $\H/\log p$ polynomials $\overline H$ is $O(h\H)$ bits.
The size of the HCP $H_D$ computed in step~\ref{alg2:final} is $O(h^2(\log h)^{1+o(1)})$ bits (under GRH), and this is also bounds the space to compute it via~\cite{Sutherland:HCP}.
All other steps take negligible space, and this completes the proof of the space bound.

The time to compute $\overline H'$ and $\gcd(\overline H,\overline H')$ is $h(\log h)^{3+o(1)}$ using the fast Euclidean algorithm, and the time to compute $\gcd(\overline H(x),x^p-x)$ is bounded by $h(\log h)^{3+o(1)}$ if we work in the ring $R\coloneqq \Fp[x]/(\overline H(x))$ and use binary exponentiation to compute $x^p\bmod \overline H$ and use Kronecker substitution with fast multiplication to multiply in~$R$.
The total time spent in steps \ref{alg2:gcd}--\ref{alg2:checkd1fac} is then bounded by $h^2(\log h)^{3+o(1)}$.

We expect to perform steps \ref{alg2:getE}--\ref{alg2:final} once or twice on average, since the density of ordinary primes among the degree 1 primes of $F=\Q[X]/(H(X))$ is at least 1/2 by Proposition~\ref{prop:ordprimes} and its proof.
The expected time to find a single root of $\overline H$ in step \ref{alg2:getE} using \cite{Rabin:roots} is bounded by $h(\log h)^{3+o(1)}$.
The time to construct $E$ is negligible, as is the $(\log p)^{3+o(1)}$ time for supersingularity testing via \cite{Sutherland:SS} in step \ref{alg2:sstest}.
The time to compute $\End(E)$ in step~\ref{alg2:endE} via \cite{Bisson,BissonSutherland}, including the time to factor $a_p^2-4p$, is subexponential in $\log h$, which is also negligible.
The time to compute $h(D)$ in step~\ref{alg2:hD} is also subexponential in $\log h$, via the algorithm in \cite{HafnerMcCurley}.
The computation of $H_D$ in step~\ref{alg2:final} using \cite{Sutherland:HCP} takes $h^2(\log h)^{3+o(1)}$ time under the heuristics in \cite[\S7.1]{Sutherland:HCP} and Proposition~\ref{prop:Dbound}.
\end{proof}

\begin{remark}
Theorem~\ref{thm:alg2correct} applies to the implementation of Algorithm~\ref{alg:two} described in Theorem~\ref{thm:alg2complexity}.
Its output is correct whether the heuristic assumptions in the hypothesis of Theorem~\ref{thm:alg2complexity} hold or not.
\end{remark}

\begin{remark}
Our algorithms can be adapted to identify polynomials~$H$ associated to \emph{class invariants} other than the $j$-function, including many of those commonly used.
We refer the reader to \cite{BLS} and \cite{EngeSutherland} for a discussion of various alternative class invariants $g$ and the methods that can be used to compute corresponding class polynomials $H^g\in \Z[X]$ and modular polynomials $\Phi^g\in \Z[X,Y]$.
The modular function $g$ will satisfy some relation with $j$ encoded by a polynomial $\Psi^g\in \Z[X,j]$.
For example, for $\gamma_2\coloneqq \sqrt[3]{j}$ we have $\Psi^{\gamma_2}(X,j)=X^3-j$, and for the Weber function $\f(z)\coloneqq \zeta_{48}^{-1}\eta((z+1)/2)/\eta(z)$ we have $\Psi^{\mathfrak f}(X,j)=(X^{24}-16)^3-X^{24}j$.
To apply our algorithms to $g$ rather than $j$ one uses the polynomial $\Psi^g$ to replace $j$-invariants with $g$-invariants, replaces $H_D$ with $H_D^g$ (for suitable $D$), replaces $\Phi_\ell$ with $\Phi_\ell^g$, and replaces the supersingular polynomial $\ss_\ell$ with a polynomial $\ss_\ell^g$ whose roots are $g$-invariants of supersingular elliptic curves in characteristic~$\ell$.
The main difference is that, depending on the class invariant~$g$, one may need to restrict the discriminants $D$ and the primes $p$ that are used.
See \cite{EngeSutherland}, \cite[\S 7]{BLS}, and \cite[\S 3]{Sutherland:modpoly} for more information.
\end{remark}

\section{Computational results}\label{sec:timings}

Implementations of Algorithms~\ref{alg:one} and~\ref{alg:two} in \Magma{}, \GP{}, and \Sage{} are available in the \Github{} repository associated to this paper \cite{repo}.  In all three cases Algorithm~\ref{alg:two} outperforms Algorithm~\ref{alg:one}, while solving both Problems 1 and 2, so here we report only on our implementation of Algorithm~\ref{alg:two}, as this is the algorithm we would recommend using in practice.
We did not incorporate Remark~\ref{rem:batching}, which makes little practical difference in the range of inputs we tested.  We did incorporate Remarks~\ref{rem:pmin} and~\ref{rem:end}, both of which noticeably improve performance.

In our implementations we rely on built-in functions provided by \Magma{}, \GP{}, and \Sage{} for supersingularity testing and computing Hilbert class polynomials, and to compute the modular polynomials used in our implementation of Remark~\ref{rem:end} (we note that \Sage{} uses \Arb{} \cite{Arb} to compute Hilbert class polynomials and \GP{} to compute modular polynomials).  The time spent on supersingularity testing and computing modular polynomials is negligible, both in practice and asymptotically.

By contrast, the time spent computing $H_D$ in step~\ref{alg2:final} of Algorithm~\ref{alg:two} typically dominates the computation on inputs that are Hilbert class polynomials.
The variation in the running times of our three implementations is mostly explained by differences in how quickly \Magma{}, \GP{}, and \Sage{} compute HCPs, as shown in Table~\ref{tab:timings}.
In the \Magma{} and \GP{} implementations, the total running time is never more than twice the time to compute $H_D$; in other words, Algorithm~\ref{alg:two} can identify HCPs in less time than it takes to compute them.
This is not true of the \Sage{} implementation for the range of discriminants we tested, due to the fact that \Sage{} (via \Arb{}) computes HCPs more quickly than \Magma{} or \GP{} for discriminants in this range,\footnote{For larger $|D|$ the CRT-based algorithm \cite{Sutherland:HCP} used by \GP{} outperforms the complex analytic method \cite{Enge} used by \Arb{}. The crossover appears to occur around $h(D)=5000$ but comes earlier if one exploits all the optimizations described in \cite{Sutherland:HCP}.} but finite field arithmetic in \Sage{} is substantially slower than it is in \Magma{} or \GP{}.

To assess the practical performance of our algorithms we applied them to inputs $H=H_D$ (a polynomial that is an HCP) and $H=H_D+1$ (a polynomial that is not) for various imaginary quadratic discriminants~$D$ with class numbers $h(D)$ up to $1000$; results for $h(D)\le 1000$ can be found in Table~\ref{tab:timings}.
For $h(D)\le 200$ we performed the same computations using the methods implemented in \Magma{} Version 2.25-7 and \Sage{} Version 9.7.
For the \Magma{} computations we used the \texttt{CMTest} function located in the package file \texttt{/Geometry/CrvEll/cm.m} (this function is not part of the public interface but can be accessed by importing it).  This function is quite memory intensive, and we were unable to apply it to discriminants with $h(D) > 200$ using the 128GB of memory available to us on our test platform.

For the \Sage{} computations we used the first method described in \S\ref{subsec:sage}, which compares the input $H$ to all $H_D$ with $h(D)=\deg H$.  To get more meaningful timings we implemented a simplified version that does not cache previously computed results and managed our own dictionary of precomputed discriminants $D$, using the dataset computed by Klaise \cite{Klaise}, which we extended to $h(D)\le 1000$ by assuming the GRH. Under this assumption, there are 6,450,424 discriminants~$D<0$ with $h(D)\le 1000$; these can be found in the file \texttt{cmdiscs1000.txt} in \cite{repo}.  The largest is $|D|=227932027$ with $h(D)=996$.

Proposition~\ref{prop:Dbound} implies that every fundamental $D_0<0$ with $h(D_0)\le 1000$ satisfies $|D_0|\le 1629085093$.  Class numbers for all such $D_0$ can be found in the dataset computed by Jacobson and Mosunov \cite{JacobsonMosunov}, which covers $|D_0|<2^{40}$ and can be downloaded from \cite{LMFDB}. There are 4,115,897 fundamental discriminants $|D_0|\le 1629085093$ with $h(D_0)\le 1000$, and one can use \eqref{eq:hD} to compute the 6,450,424 discriminants~$D<0$ with $h(D)\le 1000$; one could use the same approach for $h(D)\le 24416$, but we did not attempt this.
Variations in the timings in the \Sage{} column of Table~\ref{tab:timings} are explained by variations in the sets $S_h\coloneqq \{D<0:h(D)=h\}$; for example, $\#S_{400}=22366$, $\#S_{425}=956$, $\#S_{450}=6745$, $\#S_{475}=1065$, and $\#S_{500}=10458$.

In our tests we used the discriminants $D$ with $h(D)=5$, 10,\ldots, 45, 50, 75, 100, 125, \ldots, 975, 1000 for which $|D|$ takes the median value among all $D<0$ with the same class number.
Performance comparisons can be found in Table~\ref{tab:timings}, which also lists the degree~$h$ and the logarithmic height $\H$ of the inputs (which are effectively the same for both our CM input $H=H_D$ and our non-CM input $H=H_D+1$).
As noted in the introduction, all three implementations of Algorithm~\ref{alg:two} have a substantial advantage over the implementations previously used by \Magma{} and \Sage{} across the entire range of discriminants on both CM and non-CM inputs, especially the latter.  As anticipated by our complexity analysis, this advantage grows as~$h$ increases.

\begin{table}[ht!]
\begin{center}
\begin{small}
\setlength{\tabcolsep}{2.5pt}
\renewcommand{\arraystretch}{0.99}
\begin{tabular}{rrr|rrr|rrr|rrr|rr|rr}
\multicolumn{3}{p{2.4cm}}{\centering}&\multicolumn{3}{p{2.4cm}}{\centering \Magma{}\\ Algorithm 2}&\multicolumn{3}{p{2.4cm}}{\centering \GP\\ Algorithm 2}&\multicolumn{3}{p{2.4cm}}{\centering \Sage\\ Algorithm 2}&\multicolumn{2}{p{1.8cm}}{\centering\Sage{}\\9.7}&\multicolumn{2}{p{1.5cm}}{\centering\Magma{}\\2.25-7}\\
$h$ & $|H|$ & $|D|$ & $t_{\rm HCP}$ & $t_{\rm CM}$ & \hspace{-20pt}$t_{\rm noCM}$ & $t_{\rm HCP}$ & $t_{\rm CM}$ & \hspace{-20pt}$t_{\rm noCM}$& $t_{\rm HCP}$ & $t_{\rm CM}$ & $t_{\rm noCM}$ & $t_{\rm CM}$ & $t_{\rm noCM}$ & $t_{\rm CM}$ & $t_{\rm noCM}$\\[1pt]\toprule
   5 &   120 &      571 &   0.00 &   0.00 & 0.00 &  0.00 &  0.00 & 0.00 &  0.00 &  0.01 & 0.00 & 0.00 & 0.00 & 0.00 & 0.00\\
  10 &   294 &     2299 &   0.00 &   0.01 & 0.00 &  0.02 &  0.01 & 0.00 &  0.00 &  0.02 & 0.00 & 0.00 & 0.01 & 0.00 & 0.01\\
  15 &   527 &     6571 &   0.00 &   0.00 & 0.00 &  0.01 &  0.01 & 0.00 &  0.00 &  0.01 & 0.00 & 0.01 & 0.01 & 0.01 & 0.01\\
  20 &   843 &     9124 &   0.00 &   0.01 & 0.00 &  0.02 &  0.02 & 0.00 &  0.00 &  0.02 & 0.00 & 0.06 & 0.14 & 0.06 & 0.01\\
  25 &   941 &    25747 &   0.00 &   0.01 & 0.00 &  0.06 &  0.06 & 0.00 &  0.00 &  0.06 & 0.00 & 0.02 & 0.06 & 0.02 & 0.06\\
  30 &  1198 &    21592 &   0.00 &   0.02 & 0.00 &  0.05 &  0.06 & 0.00 &  0.00 &  0.07 & 0.00 & 0.15 & 0.39 & 0.12 & 0.12\\
  35 &  1484 &    42499 &   0.00 &   0.02 & 0.00 &  0.04 &  0.07 & 0.00 &  0.00 &  0.11 & 0.00 & 0.06 & 0.15 & 0.36 & 0.36\\
  40 &  1739 &    34180 &   0.01 &   0.02 & 0.00 &  0.05 &  0.06 & 0.00 &  0.00 &  0.02 & 0.00 & 1.04 & 2.67 & 0.44 & 0.45\\
  45 &  1027 &    60748 &   0.01 &   0.03 & 0.00 &  0.09 &  0.10 & 0.00 &  0.00 &  0.03 & 0.01 & 0.24 & 0.63 & 0.98 & 0.97\\
  50 &  2161 &    64203 &   0.02 &   0.02 & 0.00 &  0.09 &  0.09 & 0.00 &  0.00 &  0.04 & 0.00 & 0.63 & 1.64 & 1.41 & 1.40\\
\midrule
  75 &  3221 &   157051 &   0.07 &   0.10 & 0.00 &  0.14 &  0.17 & 0.00 &  0.01 &  0.14 & 0.00 &    2 &    5 &   11 &   11\\
 100 &  4197 &   249451 &   0.15 &   0.23 & 0.00 &  0.29 &  0.37 & 0.00 &  0.03 &  0.30 & 0.00 &   26 &   64 &   50 &   42\\
 125 &  6202 &   516931 &   0.38 &   0.40 & 0.00 &  0.47 &  0.51 & 0.00 &  0.06 &  0.16 & 0.03 &    7 &   17 &  169 &  169\\
 150 &  6318 &   565852 &   0.48 &   0.58 & 0.00 &  0.45 &  0.56 & 0.00 &  0.07 &  0.34 & 0.00 &   75 &  185 &  317 &  317\\
 175 &  8500 &  1016731 &   0.92 &   1.36 & 0.01 &  0.98 &  1.36 & 0.00 &  0.14 &  1.30 & 0.00 &   29 &   71 &  917 &  914\\
 200 &  9520 &   910539 &   1.32 &   1.86 & 0.00 &  0.77 &  1.24 & 0.00 &  0.19 &  1.21 & 0.00 &  600 & 1480 & 1267 & 1260\\
\midrule
 225 & 11427 &  1480588 &   2.12 &   2.16 & 0.01 &  2.46 &  2.56 & 0.00 &  0.29 &  0.38 & 0.10 &   116 &   288\\
 250 & 12923 &  1757251 &   3.05 &   3.88 & 0.00 &  1.82 &  2.55 & 0.00 &  0.39 &  1.90 & 0.00 &   281 &   700\\
 275 & 13299 &  2366443 &   3.63 &   3.82 & 0.00 &  3.50 &  3.70 & 0.00 &  0.47 &  0.94 & 0.02 &   145 &   365\\
 300 & 14621 &  2127259 &   4.64 &   6.20 & 0.01 &  2.06 &  3.23 & 0.00 &  0.60 &  3.28 & 0.02 &  3087 &  7502\\
 325 & 16683 &  3150331 &   6.47 &  13.86 & 0.00 &  2.91 &  7.89 & 0.00 &  0.79 & 13.40 & 0.00 &   257 &   638\\
 350 & 19546 &  3469651 &   9.78 &  10.22 & 0.01 &  4.13 &  4.34 & 0.00 &  1.12 &  1.72 & 0.01 &  1173 &  2869\\
 375 & 20049 &  4428859 &  10.97 &  11.69 & 0.01 &  5.10 &  5.67 & 0.01 &  1.25 &  2.69 & 0.01 &   747 &  1854\\
 400 & 21707 &  3460787 &  12.90 &  16.99 & 0.00 &  5.91 &  8.45 & 0.00 &  1.50 &  5.66 & 0.00 & 13346 & 31952\\
 425 & 22540 &  6268987 &  15.65 &  21.59 & 0.00 &  8.22 & 11.27 & 0.00 &  1.70 &  9.32 & 0.01 &   708 &  1733\\
 450 & 25814 &  5226388 &  18.67 &  20.52 & 0.00 &  9.67 & 10.79 & 0.00 &  2.25 &  4.58 & 0.00 &  5677 & 13738\\
 475 & 26144 &  7776619 &  21.95 &  23.21 & 0.01 &  9.34 & 10.46 & 0.01 &  2.45 &  4.01 & 0.01 &  1082 &  2640\\
 500 & 28965 &  6423467 &  26.22 &  31.21 & 0.01 &  9.99 & 12.35 & 0.00 &  3.03 &  8.52 & 0.00 & 12032 & 28873\\
\cmidrule{1-14}
 525 & 28822 &  7874131 &  29.05 &  30.33 & 0.02 &  8.54 &  8.66 & 0.01 &  3.21 &  4.73 & 0.23\\
 550 & 29376 &  8427715 &  29.54 &  31.06 & 0.00 & 11.27 & 12.44 & 0.00 &  3.44 &  5.55 & 0.01\\
 575 & 31885 & 10340347 &  38.94 &  44.98 & 0.02 & 14.86 & 17.72 & 0.01 &  4.11 & 14.45 & 0.37\\
 600 & 33802 &  7885067 &  45.68 &  49.61 & 0.01 & 14.97 & 17.57 & 0.01 &  4.73 & 10.93 & 0.02\\
 625 & 35331 & 12907387 &  53.27 &  59.03 & 0.01 & 19.46 & 22.95 & 0.01 &  5.30 & 11.80 & 0.51\\
 650 & 36681 & 12266403 &  55.77 &  59.38 & 0.02 & 25.03 & 26.39 & 0.01 &  5.87 &  9.57 & 0.54\\
 675 & 38661 & 13845211 &  69.36 &  74.36 & 0.01 & 13.85 & 16.52 & 0.01 &  6.65 & 11.97 & 0.01\\
 700 & 39857 & 12955579 &  72.36 &  76.45 & 0.01 & 14.50 & 17.28 & 0.01 &  7.22 & 10.72 & 0.01\\
 725 & 42183 & 17982403 &  89.32 &  97.08 & 0.01 & 25.03 & 30.19 & 0.01 &  8.26 & 17.34 & 0.63\\
 750 & 41262 & 14687500 &  89.52 &\hspace{-4pt} 129.96 & 0.02 & 31.56 & 55.78 & 0.01 &  8.24 & 55.11 & 0.23\\
 775 & 43681 & 19485628 &  99.91 &\hspace{-4pt} 124.65 & 0.01 & 18.52 & 31.65 & 0.01 &  9.29 & 42.91 & 0.01\\
 800 & 44169 & 13330819 & 106.77 &\hspace{-4pt} 122.06 & 0.02 & 20.26 & 28.64 & 0.01 &  9.73 & 27.43 & 0.02\\
 825 & 48830 & 19844179 & 136.32 &\hspace{-4pt} 150.59 & 0.01 & 19.61 & 28.84 & 0.01 & 12.00 & 32.50 & 0.01\\
 850 & 50884 & 21333372 & 138.99 &\hspace{-4pt} 159.28 & 0.01 & 42.15 & 55.92 & 0.01 & 13.29 & 38.59 & 0.01\\
 875 & 53248 & 25154971 & 171.62 &\hspace{-4pt} 192.83 & 0.01 & 30.35 & 42.59 & 0.01 & 14.71 & 43.61 & 0.01\\
 900 & 47449 & 19028875 & 141.95 &\hspace{-4pt} 145.31 & 0.01 & 28.00 & 30.76 & 0.01 & 12.59 & 16.73 & 0.01\\
 925 & 54158 & 27862699 & 189.08 &\hspace{-4pt} 190.54 & 0.03 & 36.09 & 36.15 & 0.02 & 15.95 & 17.90 & 1.01\\
 950 & 54775 & 26553843 & 200.47 &\hspace{-4pt} 206.92 & 0.03 & 37.02 & 40.90 & 0.01 & 16.71 & 24.61 & 0.48\\
 975 & 57027 & 27589459 & 222.11 &\hspace{-4pt} 240.95 & 0.04 & 34.99 & 46.40 & 0.02 & 18.47 & 47.52 & 1.00\\
 1000& 56827 & 23519868 & 215.96 &\hspace{-4pt} 267.94 & 0.03 & 49.48 & 83.42 & 0.02 & 18.81 & 81.98 & 0.03\\
\cmidrule{1-12}
\end{tabular}
\end{small}
\medskip

\begin{caption}{Single threaded times in seconds taken on a 5.6GHz Intel i9-13900KS.
The $t_{\rm HCP}$ column is the time to compute the $H_D$ of degree $h$ and logarithmic height $\H$, the $t_{\rm CM}$ columns are times on input $H_D$ and the $t_{\rm noCM}$ columns are times on input $H_D+1$.}\label{tab:timings}
\end{caption}
\end{center}
\end{table}

\FloatBarrier

\bibliographystyle{amsplain}
\providecommand{\bysame}{\leavevmode\hbox to3em{\hrulefill}\thinspace}
\providecommand{\MR}{\relax\ifhmode\unskip\space\fi MR }
\providecommand{\MRhref}[2]{\href{http://www.ams.org/mathscinet-getitem?mr=#1}{#2}}
\providecommand{\href}[2]{#2}

\end{document}